\newtheorem{theorem}{Theorem}[section]
\newtheorem{proposition}[theorem]{Proposition}
\newtheorem{lemma}[theorem]{Lemma}
\newtheorem{definition}[theorem]{Definition}
\newtheorem{corollary}[theorem]{Corollary}
\newtheorem{remark}[theorem]{Remark}
\theoremstyle{definition}
\newtheorem*{definition*}{Definition}
\newtheorem*{proposition*}{Proposition}
\newtheorem*{corollary*}{Corollary}
\newtheorem*{lemma*}{Lemma}
\def\C{\mathcal C}
\def\cC{\mathcal C}
\def\cG{\mathcal G}
\def\cL{\mathcal L}
\def\PG{{\rm{PG}}}
\def\deg{\mbox{\rm deg}}
\def\det{\mbox{\rm det}}
\def\min{{\rm min}}
\def\dim{\mbox{\rm dim}}
\def\fq{{\mathbb F}_q}
\def\fqn{{\mathbb F}_{q^n}}
\def\fqnt{{\mathbb F}_{q^{nt}}}
\def\ff{{\underline{f}}}
\title{Linear sets from projection of Desarguesian spreads}
\author{Vito Napolitano, Olga Polverino, Giovanni Zini and Ferdinando Zullo}
\date{}
\begin{document}
\maketitle

%\thanks{\begin{center}{\small \em To }
%\end{center}}

\begin{abstract}
Every linear set in a Galois space is the projection of a subgeometry, and most known characterizations of linear sets are given under this point of view.
For instance, scattered linear sets of pseudoregulus type are obtained by considering a Desarguesian spread of a subgeometry and projecting from a vertex which is spanned by all but two director spaces.
In this paper we introduce the concept of linear sets of $h$-pseudoregulus type, which turns out to be projected from the span of an arbitrary number of director spaces of a Desarguesian spread of a subgeometry.
Among these linear sets, we characterize those which are $h$-scattered and solve the equivalence problem between them; a key role is played by an algebraic tool recently introduced in the literature and known as Moore exponent set.
As a byproduct, we classify asymptotically $h$-scattered linear sets of $h$-pseudoregulus type.
\end{abstract}

\thanks{{\bf MSC 2010}:  51E20, 05B25, 51E22}

\thanks{{\bf Keywords}: Projective space, Desarguesian spread, Linear set, MRD-code.}

\thanks{This research was partially supported by the Italian National Group for Algebraic and Geometric Structures and their Applications (GNSAGA - INdAM). The first and the last author were also supported by the project ''VALERE: Vanvitelli pEr la RicErca" of the University of Campania ''Luigi Vanvitelli''.}

\section{Introduction}

Let $q$ be a prime power, $\fq$ be the finite field of order $q$, and $\fqn$ be the degree $n$ extension of $\fq$.
$\fq$-linear sets in a Galois space $\Lambda=\PG(r-1,q^n)$ are a natural generalization of subgeometries $\Sigma=\PG(s,q)$ of $\Lambda$, and have been intensively investigated in the last decades.
The intrinsic theoretical interest towards linear sets has been boosted in the last years by their applications in a number of areas, where they are used to construct or characterize a wide variety of geometrical and combinatorial objects.
These areas include, but are not limited to, linear codes, semifields, blocking sets, strongly regular graphs; see \cite{Lavrauw,LVdV2015,Polverino} and the references therein.

The aim of this paper is to investigate a family of linear sets of $\Lambda=\PG((h+1)t-1,q^n)$, with $h,t \geq 1$, obtained by considering a Desarguesian $(n-1)$-spread $\mathcal{D}$ of $\Sigma=\PG(nt-1,q)$ and projecting from the span of $n-h-1$ director spaces of $\mathcal{D}$.
When $h=1$, maximum scattered $\fq$-linear sets of pseudoregulus type have been characterized in these terms in \cite{LMPT:14}, generalizing results of \cite{LVdV2013,MPT2007}. In particular, those linear sets are associated with exactly $\frac{q^{nt}-1}{q^n-1}$ pairwise disjoint lines (forming the pseudoregulus) and admit exactly two $(t-1)$-dimensional subspaces intersecting each of these lines in one point. When $h\geq 2$, linear sets of this type still have a similar structure, once we replace lines by $h$-dimensional subspaces.
These subspaces form what we call a $h$-\emph{pseudoregulus}, cf. Definition \ref{def:hpseudo}; they admit exactly $h+1$ distinct $(t-1)$-subspaces intersecting each element of the $h$-pseudoregulus in one point.
Recently, in \cite{CMPZu} a family of scattered linear sets has been detected by imposing further requests on their intersections with $h$-subspaces of $\Lambda$; they have been named \emph{$h$-scattered} linear sets.
Our main results are the following ones:
\begin{itemize}
    \item we show examples of linear sets of $h$-pseudoregulus type, for any admissible value of $h$, $t$, $q$ and $n$ (Theorem \ref{th:fromalgebratogeometry});
    \item we provide an analytic description of all linear sets of this type (Theorem  \ref{thm:maintheoremsec3});
    \item we establish a connection between $h$-scattered linear sets of $h$-pseudoregulus type and a recently introduced algebraic tool called Moore exponent set (Theorem \ref{th:mainsec4});
    \item we classify asymptotically maximum $h$-scattered linear sets of $h$-pseudoregulus type (Theorem \ref{th:asymp}).
\end{itemize}
The paper is organized as follows.
Section \ref{sec:preliminaries} contains preliminary results and tools about Desarguesian spreads, linear sets, rank metric codes and Moore exponent sets.

Section \ref{sec:hpseudo} is devoted to the description of linear sets of $h$-pseudoregulus type, as defined in Definition \ref{def:hpseudo}. In particular, we introduce in Theorem \ref{th:fromalgebratogeometry} a class of linear sets of $h$-pseudoregulus type for every $h\geq1$, $t,n\geq2$, and prime power $q$; we prove in Theorem \ref{thm:maintheoremsec3} that, up to equivalence, all linear sets of $h$-pseudoregulus type are of this form.

In Section \ref{sec:maxhscatt} we deal with those linear sets of $h$-pseudoregulus type which are maximum $h$-scattered.
More precisely, in Theorem \ref{th:mainsec4} we associate with each of them a Moore exponent set for $q$ and $n$; among the linear sets of $h$-pseudoregulus type, the maximum $h$-scattered ones are actually characterized by means of Moore exponent sets.
Moreover, in Theorem \ref{th:uniqPseudo} we show the uniqueness of the $h$-pseudoregulus for maximum $h$-scattered linear sets which are not subgeometries.
As a consequence, we completely solve in Corollary \ref{cor:equivissue} the equivalence issue for those linear sets.
Finally, relying on the asymptotic results in \cite{DANYU} for Moore exponent sets, we give in Theorem \ref{th:asymp} an asymptotic classification of maximum $h$-scattered linear sets of $h$-pseudoregulus type.

\section{Preliminary notions and results}\label{sec:preliminaries}

\subsection{Desarguesian spreads}\label{sec:desspreads}

An $(n-1)$-spread of $\mathrm{PG}(n t -1, q)$  is a family $\mathcal{S}$ of mutually disjoint $(n-1)$--dimensional subspaces such that each point of   $\mathrm{PG}(n t -1, q)$ belongs to an element of $\mathcal{S}$. Examples of spreads are the Desarguesian ones, which can be constructed as follows.   
Every point $P$ of $\mathrm{PG}(n-1, q^n)= \mathrm{PG}(V, \mathbb{F}_{q^n})$ defines an $(n-1)$--dimensional subspace $X(P)$ of $\mathrm{PG}(n t -1, q) = \mathrm{PG}(V, \mathbb{F}_{q})$ and ${\mathcal D} = \{X(P)\, : \, P\in \mathrm{PG}(t  -1, q^n)\}$ is a spread of $\mathrm{PG}(n t -1, q)$, called a {\em Desarguesian spread} (see \cite[Section 25]{Segre}). If $t > 2$, the incidence structure $\Pi_{t-1}({\mathcal D})$, whose points are the elements of $\mathcal{D}$  and whose lines are the $(2n-1)$--dimensional subspaces of $\mathrm{PG}(n t -1, q)$ joining two distinct elements  of $\mathcal{D}$, is isomorphic to $\mathrm{PG}(t  -1, q^n)$. The structure $\Pi_{t-1}({\mathcal D})$  is called the $\mathbb{F}_q$--{\em linear representation of} $\mathrm{PG}(t-1, q^n)$.

A way to obtain a Desarguesian $(n-1)$--spread of $\mathrm{PG}(n t - 1, q)$ is the following. Let $\Sigma = \mathrm{PG}(nt -1, q)$ and $\Sigma^* = \mathrm{PG}(nt -1, q^n)$. Embed $\Sigma$ in $\Sigma^*$ in such a way that $\Sigma=\mathrm{Fix}(\Psi)$, where $\Psi\in{\rm P\Gamma L}(\Sigma^*)$ is a semilinear collineation of order $n$ of $\Sigma^*$ whose fixed points are the points of $\Sigma$.
\begin{lemma}{\rm \cite[Lemma 1]{Lun99}}\label{lemma:lunardon}
Let $S$ be a subspace of $\Sigma^*$.
Then $\dim\, S=\dim(S\cap\Sigma)$ if and only if $S=S^{\Psi}$.
\end{lemma}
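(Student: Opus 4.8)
The plan is to pass from the projective statement to linear algebra over the field extension $\fq\subset\fqn$ and to isolate the whole difficulty in a single Galois-descent fact. Fix a $\sigma$-semilinear map $\psi\colon V^*\to V^*$ inducing $\Psi$, where $\sigma\colon x\mapsto x^q$ generates $\mathrm{Gal}(\fqn/\fq)$ and $\psi^n=\mathrm{id}$, and write $V=\mathrm{Fix}(\psi)$, so that $\Sigma=\mathrm{PG}(V,\fq)$. If $W\le V^*$ is the $\fqn$-subspace with $S=\mathrm{PG}(W)$, then the two conditions in the statement translate as follows: the point set $S\cap\Sigma$ corresponds to the $\fq$-subspace $W\cap V$, so that $\dim(S\cap\Sigma)=\dim_{\fq}(W\cap V)-1$ and $\dim S=\dim_{\fqn}W-1$, while the collineation condition reads $S=S^{\Psi}\iff\psi(W)=W$. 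Thus the lemma is equivalent to the purely linear-algebraic assertion
\[
\psi(W)=W\quad\Longleftrightarrow\quad \dim_{\fq}(W\cap V)=\dim_{\fqn}W .
\]

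The elementary half of the argument rests on one descent observation that I would record first: since $V=\mathrm{Fix}(\psi)$ is an $\fq$-form of $V^*$, any $\fq$-linearly independent subset of $V$ is $\fqn$-linearly independent in $V^*$; equivalently the natural map $V\otimes_{\fq}\fqn\to V^*$ is an isomorphism. From this one gets for free that $\dim_{\fq}(W\cap V)\le\dim_{\fqn}W$ always holds, and that
\[
W=\langle W\cap V\rangle_{\fqn}\quad\Longleftrightarrow\quad \dim_{\fq}(W\cap V)=\dim_{\fqn}W,
\]
because an $\fq$-basis of $W\cap V$ is an $\fqn$-independent family inside $W$ which spans $W$ precisely when the dimensions agree. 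This reduces the lemma to showing that $\psi(W)=W$ is equivalent to $W$ being $\fqn$-spanned by its fixed vectors $W\cap V$.

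One implication of that equivalence is immediate: if $W=\langle W\cap V\rangle_{\fqn}$, then writing any $w\in W$ as $\sum_i\lambda_i b_i$ with $b_i\in W\cap V$ gives $\psi(w)=\sum_i\lambda_i^{q}\,\psi(b_i)=\sum_i\lambda_i^{q}b_i\in W$, so $\psi(W)=W$. The reverse implication, namely that a $\psi$-invariant $W$ is spanned by its fixed points, is the one genuine obstacle and is exactly Speiser's theorem (Galois descent for vector spaces) applied to the restriction $\psi|_W$, which is again a $\sigma$-semilinear map with $(\psi|_W)^n=\mathrm{id}$: it yields $W=\mathrm{Fix}(\psi|_W)\otimes_{\fq}\fqn=\langle W\cap V\rangle_{\fqn}$. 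I would either invoke this descent result directly or, equivalently, reduce to coordinates in which $\psi$ is the coordinatewise Frobenius on $V^*=\fqn^{\,nt}$ and check the statement there. Combining the two equivalences of the previous paragraph closes the chain and proves the lemma; the only step requiring care is the descent direction, everything else being dimension bookkeeping secured by the tensor-product identification $V^*=V\otimes_{\fq}\fqn$.
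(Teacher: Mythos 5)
Your proof is correct, but there is nothing in the paper to compare it against: the lemma is stated as a quotation of \cite[Lemma 1]{Lun99} and the paper gives no proof of it, so any complete argument is necessarily "your own route." On the merits: your reduction of the projective statement to the linear-algebra equivalence $\psi(W)=W \iff \dim_{\mathbb{F}_q}(W\cap V)=\dim_{\mathbb{F}_{q^n}}W$ is the standard one; the dimension bookkeeping (an $\mathbb{F}_q$-basis of $W\cap V$ stays $\mathbb{F}_{q^n}$-independent, hence $W=\langle W\cap V\rangle_{\mathbb{F}_{q^n}}$ exactly when the two dimensions agree) and the implication "spanned by fixed vectors $\Rightarrow$ $\psi$-invariant" are verified correctly; and you rightly isolate the only substantial step as Galois descent (Speiser's theorem) applied to the $\sigma$-semilinear restriction $\psi|_W$ with $(\psi|_W)^n=\mathrm{id}$, which indeed yields $W=\langle W\cap \mathrm{Fix}(\psi)\rangle_{\mathbb{F}_{q^n}}$. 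Two points you pass over silently are worth making explicit, though both are standard and live inside the same descent formalism you set up: (i) a lift $\psi$ of $\Psi$ with $\psi^n=\mathrm{id}$ exists (any lift satisfies $\psi_0^n=c\,\mathrm{id}$ with $c\in\mathbb{F}_q^*$, and one rescales using surjectivity of the norm $\mathbb{F}_{q^n}^*\to\mathbb{F}_q^*$); (ii) the identification $\Sigma=\mathrm{PG}(\mathrm{Fix}(\psi),\mathbb{F}_q)$, i.e.\ that every $\Psi$-fixed point of $\Sigma^*$ admits a $\psi$-fixed representative vector, which requires multiplicative Hilbert 90 applied to the eigenvalue $\lambda$ (of norm $1$) in $\psi(w)=\lambda w$, together with the injectivity of $\mathrm{PG}(V,\mathbb{F}_q)\to\Sigma^*$ that your tensor-product observation already provides. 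With these remarks added the argument is complete, and it is in substance the same descent argument that underlies Lunardon's original proof.
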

Let $\Theta =\mathrm{PG}(t-1, q^n)$ be a subspace of $\Sigma^*$ such that $\Theta, \Theta^\Psi, \ldots, \Theta^{\Psi^{n-1}}$ span the whole space $\Sigma^*$. If $P$ is a point  of $\Theta$, then, by Lemma \ref{lemma:lunardon}, $X^*(P)= \langle P, P^\Psi, \ldots, P^{\Psi^{n-1}}\rangle$ is a $(n-1)$-dimensional subspace of $\Sigma^*$ defining a $(n-1)$-subspace $X(P) = X^*(P) \cap \Sigma$ of $\Sigma$. As $P$ runs over the subspace $\Theta$ we get a set of $q^{n(t-1)} + q^{n(t-2)}+ \cdots + q^n + 1$ mutually disjoint $(n-1)$-dimensional subspaces of $\Sigma$. Such a set is denoted by ${\mathcal D } = {\mathcal D}(\Theta)$ and is a Desarguesian $(n-1)$-spread of $\Sigma$; see \cite{Segre}.  The $(t-1)$-dimensional spaces $\Theta, \Theta^\Psi, \ldots, \Theta^{\Psi^{n-1}}$ are uniquely defined by the Desarguesian spread $\mathcal D$, i.e. ${\mathcal D}(\Theta) = {\mathcal D}(X)$ if and only if $X= \Theta^{\Psi^i}$ for some $i\in\{0, 1,\ldots, n-1\}$, and are called {\em director spaces} of $\mathcal D$ (cf. \cite{LMPT:14}).
Proposition \ref{prop:desspreads} will play a special role in what follows.

\begin{proposition}\label{prop:desspreads}{\rm \cite[Remark 2.1]{LMPT:14}}
Let $\mathcal S$ be an $(n-1)$--spread of $\Sigma = \mathrm{PG}(nt-1, q)$ embedded in $\Sigma^\ast = \mathrm{PG}(nt-1, q^n)$ in such a  way that $\Sigma = Fix(\Psi)$, where $\Psi$ is a semilinear collineation of $\Sigma^\ast$ of order $n$. If $H$ is a $(t-1)$--dimensional subspace of $\Sigma^\ast$ such that 
\begin{itemize}
    \item $\Sigma^\ast = \langle H, H^\Psi, \ldots, H^{\Psi^{n-1}}\rangle$,
    \item $X^{\ast}\cap H \neq \emptyset$ for each $(n-1)$--dimensional subspace $X^\ast$ of $\Sigma^\ast$
such that $X^\ast \cap \Sigma \in \mathcal S$,
\end{itemize}
then $\mathcal{D}(H)= \mathcal S$, i.e. $\mathcal S$ is a Desarguesian spread and $H$ is one of its director spaces. 
\end{proposition}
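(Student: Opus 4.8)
The plan is to prove the inclusion $\mathcal{S}\subseteq\mathcal{D}(H)$ and then upgrade it to an equality by a cardinality count, since both families are $(n-1)$--spreads of $\Sigma=\mathrm{PG}(nt-1,q)$ and hence share the same number $\frac{q^{nt}-1}{q^n-1}$ of elements. First I would record that the first hypothesis $\Sigma^\ast=\langle H,H^\Psi,\dots,H^{\Psi^{n-1}}\rangle$ is precisely the admissibility condition in the construction recalled before the statement, so that $\mathcal{D}(H)$ is a genuine Desarguesian $(n-1)$--spread of $\Sigma$; moreover, for each point $P$ of $H$ the subspace $X^\ast(P)=\langle P,P^\Psi,\dots,P^{\Psi^{n-1}}\rangle$ is $(n-1)$--dimensional and $X(P)=X^\ast(P)\cap\Sigma\in\mathcal{D}(H)$. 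The maximality of this dimension is where the spanning hypothesis does its work: counting vector dimensions, the $n$ subspaces $H,H^\Psi,\dots,H^{\Psi^{n-1}}$, each of vector dimension $t$, span the $nt$--dimensional space underlying $\Sigma^\ast$, so their sum is \emph{direct}; writing $P=\langle v\rangle$ with $v\neq 0$, the images $v,v^\Psi,\dots,v^{\Psi^{n-1}}$ then lie in distinct summands and are therefore $\fqn$--linearly independent.

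Next I would fix an arbitrary element $S\in\mathcal{S}$ and manufacture the matching element of $\mathcal{D}(H)$. Let $\bar S$ be the subspace of $\Sigma^\ast$ generated by $S$ over $\fqn$. Since $S\subseteq\Sigma=\mathrm{Fix}(\Psi)$, the subspace $\bar S$ is sent to itself by $\Psi$, that is $\bar S=\bar S^\Psi$; an $\fq$--basis of $S$ remains $\fqn$--independent, so $\dim\bar S=\dim S=n-1$, and Lemma \ref{lemma:lunardon} then gives $\dim(\bar S\cap\Sigma)=\dim\bar S=n-1$, whence $\bar S\cap\Sigma=S$. In particular $\bar S$ is an $(n-1)$--dimensional subspace of $\Sigma^\ast$ with $\bar S\cap\Sigma=S\in\mathcal{S}$, so the second hypothesis applies to $X^\ast=\bar S$ and provides a point $P\in\bar S\cap H$.

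Finally I would close the loop using the $\Psi$--invariance of $\bar S$. Because $P\in H$, the subspace $X(P)=X^\ast(P)\cap\Sigma$ is an element of $\mathcal{D}(H)$; because $P\in\bar S$ and $\bar S=\bar S^\Psi$, each conjugate $P^{\Psi^i}$ lies in $\bar S$, so $X^\ast(P)\subseteq\bar S$ and hence $X(P)=X^\ast(P)\cap\Sigma\subseteq\bar S\cap\Sigma=S$. As both $X(P)$ and $S$ have dimension $n-1$, the inclusion forces $X(P)=S$, so $S\in\mathcal{D}(H)$. This establishes $\mathcal{S}\subseteq\mathcal{D}(H)$, and equality of cardinalities (equivalently, the fact that two spreads cannot be nested properly) yields $\mathcal{S}=\mathcal{D}(H)$; in particular $\mathcal{S}$ is Desarguesian and $H$ is one of its director spaces.

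I expect the only genuinely delicate point to be the justification, in the first paragraph, that $X^\ast(P)$ attains the full dimension $n-1$ for every $P\in H$, i.e. the directness of the decomposition of $\Sigma^\ast$ into $H,H^\Psi,\dots,H^{\Psi^{n-1}}$ forced by the spanning hypothesis, together with the clean use of Lemma \ref{lemma:lunardon} to pin down $\bar S$ as the unique $\Psi$--invariant $(n-1)$--space lying over $S$. Once these two structural facts are in place, the remainder is a dimension count and a direct application of the second hypothesis to the $\fqn$--extension $\bar S$ of an arbitrary spread element.
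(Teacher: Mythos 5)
Your proof is correct. Note that the paper does not actually prove this proposition: it quotes it from \cite{LMPT:14}, so there is no internal proof to compare your argument against, and yours stands as a valid self-contained derivation. The route you take is the natural one and every step checks out: the spanning hypothesis forces the vector-space sum $H+H^{\Psi}+\cdots+H^{\Psi^{n-1}}$ to be direct, so that $\mathcal{D}(H)$ is a well-defined Desarguesian spread and $X^{*}(P)$ has projective dimension $n-1$ for every point $P\in H$; for each $S\in\mathcal{S}$ the $\mathbb{F}_{q^n}$-span $\bar{S}$ is $\Psi$-invariant of dimension $n-1$ with $\bar{S}\cap\Sigma=S$ (here Lemma \ref{lemma:lunardon}, or directly the fact that $\mathbb{F}_q$-independent points of the subgeometry stay $\mathbb{F}_{q^n}$-independent, does the work), so the second hypothesis supplies a point $P\in\bar{S}\cap H$, the $\Psi$-invariance of $\bar{S}$ gives $X^{*}(P)\subseteq\bar{S}$ and hence $X(P)=S\in\mathcal{D}(H)$, and the inclusion $\mathcal{S}\subseteq\mathcal{D}(H)$ between two $(n-1)$-spreads of $\Sigma$ forces equality by the cardinality (or disjointness) argument you give.
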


\subsection{Linear sets}
Let $\Lambda = \mathrm{PG}(r-1, q^n)= \mathrm{PG}(V, \mathbb{F}_{q^n})$, $q=p^h$, $p$ prime. A set $L$ of points of $\Lambda$ is an {\em
$\mathbb{F}_q$-linear set} of $\Lambda$ if it is defined by the non-zero vectors of an $\mathbb{F}_q$-subspace $U$ of $V$,  i.e. $L= L_U=\{\langle u\rangle_{\fqn}\, : \, u\in U^*\}$. The linear set $L_U$ has  {\em rank} $k$ if $\dim_{\fq} U = k$. If $\Omega= \mathrm{PG}(W, \mathbb{F}_{q^n})$ is a subspace of $\Lambda$ and $L_U$ is an $\mathbb{F}_q$-linear set of $\Lambda$, then $\Omega\cap L_U$ is an $\mathbb{F}_q$--linear set of $\Omega$. The subspace $\Omega$ of $\Lambda$ has {\em weight} $i$ in $L_U$ if $\dim_{\fq}(U\cap W) = i$, and we write $\omega_{L_U}(\Omega)= i$.  If $L_U\neq \emptyset$, we have $\vert L_U\vert \equiv 1\pmod q$ and
\[
\vert L_U\vert \le q^{k-1}+q^{k-2}+ \cdots +q+1.\]
%\[\vert L_U\vert \equiv 1\pmod q.\]
%\[ \vert L_U\vert \equiv 1\pmod q. \]
An $\mathbb{F}_q$-linear set $L_U$ of $\Lambda$ of rank $k$ is {\em scattered} if all points of $L_U$ have weight one, or equivalently, if $L_U$ has maximum size $ q^{k-1}+q^{k-2}+ \cdots +q+1$.

\begin{theorem} \label{blok}{\rm \cite[Theorem 4.2]{BLAV:00}} A scattered $\mathbb{F}_q$-linear set of $\mathrm{PG}(r-1, q^n)$ has rank at most $rn/2$.
\end{theorem}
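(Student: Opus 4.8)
The plan is to translate the scattering hypothesis into an intersection property of the defining subspace $U$, regarded now as an $\fq$-subspace of the $rn$-dimensional $\fq$-vector space $V$, and then to read off the bound from a single application of the Grassmann identity. First I would record what ``weight one'' means concretely: the point $\langle u\rangle_{\fqn}$ corresponds to the $n$-dimensional $\fq$-subspace $\langle u\rangle_{\fqn}$ of $V$, so scattering says that $\dim_{\fq}\bigl(U\cap\langle u\rangle_{\fqn}\bigr)=1$ for every nonzero $u\in U$. Since this intersection already contains the $\fq$-line $\langle u\rangle_{\fq}$, it must in fact equal $\langle u\rangle_{\fq}$.

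The decisive step is the claim that, if $L_U$ is scattered, then $U\cap\lambda U=\{0\}$ for every $\lambda\in\fqn\setminus\fq$. Indeed, a nonzero vector $v\in U\cap\lambda U$ can be written as $v=\lambda u$ with $u\in U\setminus\{0\}$; then both $u$ and $v=\lambda u$ lie in $U\cap\langle u\rangle_{\fqn}=\langle u\rangle_{\fq}$, so $v=cu$ for some $c\in\fq$, which forces $\lambda=c\in\fq$, a contradiction. I expect this claim to be the only genuinely non-formal step, and the precise point where the scattering hypothesis is used; everything else is bookkeeping.

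To finish, I would fix any $\lambda\in\fqn\setminus\fq$, which exists since $n\ge 2$ (the statement being vacuous for $n=1$). Multiplication by $\lambda$ is an $\fq$-linear automorphism of $V$, hence $\dim_{\fq}(\lambda U)=\dim_{\fq}U=k$; combining the claim above with the Grassmann formula gives $\dim_{\fq}(U+\lambda U)=2k$. As $U+\lambda U$ is an $\fq$-subspace of $V$ and $\dim_{\fq}V=rn$, this yields $2k\le rn$, that is $k\le rn/2$. The only thing requiring a little care is to ensure that the chosen $\lambda$ is a nonzero element lying outside $\fq$, so that $\lambda U$ is a bona fide $k$-dimensional $\fq$-subspace meeting $U$ only in the origin.
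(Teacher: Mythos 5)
Your proof is correct: the key observation that scattering forces $U\cap\lambda U=\{0\}$ for every $\lambda\in\mathbb{F}_{q^n}\setminus\mathbb{F}_q$, followed by one application of the Grassmann identity inside the $rn$-dimensional $\mathbb{F}_q$-space $V$, is precisely the classical argument; since the paper states this theorem only as a citation of Blokhuis--Lavrauw without reproving it, your argument essentially reproduces the proof of the cited source, and there is nothing to add. The only nitpick is your parenthetical claim that the statement is ``vacuous'' for $n=1$: it would actually be false there (for $n=1$ every linear set is scattered and can have rank $r>r/2$), but this is immaterial under the standing assumption that $\mathbb{F}_{q^n}$ is a proper extension, i.e. $n\geq 2$.
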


A scattered $\mathbb{F}_q$-linear set $L$ of $\mathrm{PG}(r-1, q^n)$ of maximum rank $rn/2$ is called a {\em maximum scattered} linear set.
%From Equation \eqref{lineq1} follows that a scattered $\mathbb{F}_q$-linear set $L_U$  of rank $k> 1 $  of $\mathrm{PG}(r-1, q^n)$ is not a linear set of rank less than $k$.
As a consequence of  Theorem \ref{blok} one gets that  maximum scattered $\mathbb{F}_q$-linear sets of $\PG(r-1,q^n)$ span the whole space $\PG(r-1,q^n)$.

If $\dim_{\fq} U = \dim_{\fqn} V = r$ and $\langle U\rangle_{\fqn}= V$, then the linear set  $L_U$ is a subgeometry of $\mathrm{PG}(V, \mathbb{F}_{q^n})=\mathrm{PG}(r-1, q^n)$ isomorphic to $\mathrm{PG}(r-1, q)$. If $n=2$, then $L_U$ is a Baer subgeometry of $\mathrm{PG}(r-1, q^2)$.

In \cite{LP:04} the following characterization of $\mathbb{F}_q$-linear sets is given. Let  $\Sigma=\PG(S,\fq)=\mathrm{PG}(k-1, q)$ be a subgeometry of
$\Sigma^*=\PG(S^*,\fqn)=\mathrm{PG}(k-1, q^n)$, $\Gamma=\PG(H,\fqn)$ be a $(k-r-1)$-dimensional subspace of $\Sigma^*$ disjoint from $\Sigma$, and $\Lambda=\PG(V,\fqn)=\mathrm{PG}(r-1, q^n)$ be an $(r-1)$-dimensional subspace of $\Sigma^*$ disjoint from $\Gamma$. Let $L=\{p_{\Gamma,\Lambda}(P)=\langle \Gamma, P\rangle \cap \Lambda\, : \, P\in \Sigma\}$ denote the projection of $\Sigma$ from $\Gamma$ to $\Lambda$.
The subspaces $\Gamma$ and $\Lambda$ are  the {\em center} and {\em axis} of the projection $p_{\Gamma,\Lambda}$, respectively.
%Denote by $p_{\Gamma,\Lambda}$ teh map from $\Sigma$ to $L$ defined by $P\mapsto \langle \Gamma, P\rangle_{q^t}\cap \Lambda$ for each point  $P$ of $\Sigma$.
%By definition $P_{\Gamma, \Lambda}$ is surjective and $L=pr_{\Gamma, \Lambda}(\Sigma)$.

\begin{theorem}\label{th:projection} {\rm \cite[Theorems 1 and 2]{LP:04}}  If $L$ is a projection of $\Sigma=\mathrm{PG}(k-1, q)$ from $\Gamma=\PG(k-r-1,q^n)$ to $\Lambda=\mathrm{PG}(r-1, q^n)$, then $L$ is an $\mathbb{F}_q$-linear set of $\Lambda$ of rank $k$ and $\langle L\rangle= \Lambda$.
Furthermore, with the above notation, $L=L_U$, where 
\begin{equation}\label{eq:rappr}
U=(S+H)\cap V.
\end{equation}
Conversely, if $L$ is an $\mathbb{F}_q$-linear set of rank $k$ and $\langle L\rangle= \Lambda$, then either $L$ is a subgeometry of $\Lambda$ or, for each $(k-r-1)$-dimensional subspace $\Gamma$ of $\Sigma^*=\mathrm{PG}(k-1, q^n)$ disjoint from $\Lambda$, there exists a subgeometry $\Sigma$ of $\Sigma^*$ disjoint from $\Gamma$ such that $L=p_{\Gamma, \Lambda}(\Sigma)$.
\end{theorem}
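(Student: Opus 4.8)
The plan is to translate everything into linear algebra over the extension $\mathbb{F}_q \subseteq \mathbb{F}_{q^n}$ and to identify the geometric projection with an $\mathbb{F}_{q^n}$-linear projection of the underlying vector spaces. Write $\Sigma^* = \PG(S^*, \mathbb{F}_{q^n})$ with $\dim_{\mathbb{F}_{q^n}} S^* = k$, and let $H, V \subseteq S^*$ be the $\mathbb{F}_{q^n}$-subspaces underlying $\Gamma$ and $\Lambda$, and $S$ the $\mathbb{F}_q$-subspace underlying $\Sigma$. That $\Gamma$ and $\Lambda$ are disjoint of complementary projective dimension means exactly $S^* = H \oplus V$; let $\pi_V : S^* \to V$ be the projection along $H$. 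The first step is the key computation: for a point $P = \langle u\rangle_{\mathbb{F}_{q^n}}$ of $\Sigma$ (so $u \in S$), writing $u = h + v$ with $h \in H$ and $v = \pi_V(u) \in V$, one has $\langle \Gamma, P\rangle = \PG(H \oplus \langle v\rangle_{\mathbb{F}_{q^n}})$, whence $p_{\Gamma,\Lambda}(P) = \langle \Gamma, P\rangle \cap \Lambda = \langle \pi_V(u)\rangle_{\mathbb{F}_{q^n}}$. Here $v \neq 0$ because $\Gamma \cap \Sigma = \emptyset$ translates into $S \cap H = \{0\}$, so $u \notin H$; thus $p_{\Gamma,\Lambda}$ is induced by $\pi_V$.

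For the forward direction it then follows immediately that $L = L_{\pi_V(S)}$. Since $S \cap H = \{0\}$, the map $\pi_V|_S$ is $\mathbb{F}_q$-injective, so $\dim_{\mathbb{F}_q} \pi_V(S) = \dim_{\mathbb{F}_q} S = k$ and $L$ has rank $k$. As $\pi_V$ is $\mathbb{F}_{q^n}$-linear and surjective, $\langle \pi_V(S)\rangle_{\mathbb{F}_{q^n}} = \pi_V(\langle S\rangle_{\mathbb{F}_{q^n}}) = \pi_V(S^*) = V$, giving $\langle L\rangle = \Lambda$. The representation \eqref{eq:rappr} then reduces to the set equality $\pi_V(S) = (S + H)\cap V$, a short double inclusion: $\pi_V(s) = s - h \in (S+H)\cap V$, and conversely any $w \in (S+H)\cap V$ satisfies $w = \pi_V(w) = \pi_V(s)$ for the corresponding $s \in S$.

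For the converse, assume $L = L_U$ with $\dim_{\mathbb{F}_q} U = k$, $U \subseteq V$, and $\langle U\rangle_{\mathbb{F}_{q^n}} = V$. The case $k = r$ is precisely when $L$ is a subgeometry (the prescribed center has dimension $k-r-1 = -1$, hence is empty), so assume $k > r$ and fix any complement $H$ of $V$ in $S^*$ with $\dim_{\mathbb{F}_{q^n}} H = k - r$, i.e. any admissible center $\Gamma$. The idea is to lift $U$ to a subgeometry by adding suitable vertical components in $H$. Concretely, I would choose an $\mathbb{F}_q$-basis $u_1, \dots, u_k$ of $U$ whose first $r$ vectors form an $\mathbb{F}_{q^n}$-basis of $V$ (possible since $U$ spans $V$ over $\mathbb{F}_{q^n}$), together with an $\mathbb{F}_{q^n}$-basis $w_1, \dots, w_{k-r}$ of $H$. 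Setting $s_i = u_i$ for $i \le r$ and $s_{r+j} = u_{r+j} + w_j$ for $1 \le j \le k-r$, I let $S = \langle s_1, \dots, s_k\rangle_{\mathbb{F}_q}$ and verify that $\Sigma = \PG(S, \mathbb{F}_q)$ works: applying $\pi_V$ annihilates the $w_j$, so $\pi_V(S) = U$ with $\pi_V|_S$ injective (forcing both $\dim_{\mathbb{F}_q} S = k$ and $S \cap H = \{0\}$, i.e. $\Sigma \cap \Gamma = \emptyset$), while $\langle S\rangle_{\mathbb{F}_{q^n}}$ contains $V$ and each $w_j$, hence equals $S^*$, so $\Sigma$ is a genuine subgeometry. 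By the first part, $p_{\Gamma,\Lambda}(\Sigma) = L_{\pi_V(S)} = L_U = L$.

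The main obstacle is the converse, specifically arranging the three requirements on $S$ simultaneously for an arbitrarily prescribed center $\Gamma$: that $S$ project onto $U$, that $S$ span $S^*$ over $\mathbb{F}_{q^n}$ (so it is a subgeometry and not a lower-rank linear set), and that $S$ meet $H$ trivially (disjointness from $\Gamma$). The freedom to place the vertical components $w_j$ along a basis of $H$ is exactly what reconciles spanning with disjointness, and the fact that this is available for every complement $H$ is what yields the ``for each $\Gamma$'' uniformity in the statement.
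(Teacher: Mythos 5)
Your proposal is correct, but there is no internal proof to compare it against: the paper states Theorem \ref{th:projection} as a quoted result from \cite{LP:04} and gives no argument of its own. Your proof is the standard (and complete) one: identifying $p_{\Gamma,\Lambda}$ with the $\mathbb{F}_{q^n}$-linear projection $\pi_V$ of $S^*=H\oplus V$ onto $V$ along $H$ yields both the direct statement and the representation $U=(S+H)\cap V$ in one stroke, and in the converse the lift $s_{r+j}=u_{r+j}+w_j$ correctly reconciles the three requirements on $S$ (projecting onto $U$, spanning $S^*$ over $\mathbb{F}_{q^n}$, meeting $H$ trivially). The two delicate points are both handled: the basis choice exists because $\mathbb{F}_{q^n}$-independent vectors of $U$ are a fortiori $\mathbb{F}_q$-independent, and $\ker\left(\pi_V|_S\right)=S\cap H=\{0\}$ follows from the $\mathbb{F}_q$-independence of $u_1,\dots,u_k$, which simultaneously gives $\dim_{\mathbb{F}_q}S=k$ and the disjointness $\Sigma\cap\Gamma=\emptyset$.
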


Proposition \ref{prop:axisimmaterial} shows that the linear set $L=p_{\Gamma, \Lambda}(\Sigma)$ does not depend on the axis $\Lambda$ of the projection, i.e. if $\overline{\Lambda}$ is an $(r-1)$-dimensional subspace of $\Sigma^*$ disjoint from $\Gamma$, then $\overline{L}=p_{\Gamma,\overline{\Lambda}}(\Sigma)$ is projectively equivalent to $L=p_{\Gamma,\Lambda}(\Sigma)$.

\begin{proposition}\label{prop:axisimmaterial}
With the same notation as in Theorem {\rm \ref{th:projection}}, let $\overline{L}=L_{\overline{U}}$ be the $\fq$-linear set obtained by projecting $\Sigma$ from $\Gamma$ to another subspace $\overline{\Lambda}=\PG(\overline{V},\fqn)=\PG(r-1,q^n)$ of $\Sigma^*$.
Then there exists an $\fqn$-linear isomorphism $\omega:V\to\overline{V}$ such that $\omega(U)=\overline{U}$ and hence $\varphi_{\omega}(L)=\overline{L}$, where $\varphi_{\omega}:\Lambda\to\overline{\Lambda}$ is the projectivity induced by $\omega$.
\end{proposition}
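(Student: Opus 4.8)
The plan is to realize the required isomorphism $\omega$ as the linear map underlying the geometric projection of $\Lambda$ from the center $\Gamma$ onto the new axis $\overline{\Lambda}$. First I would record the relevant dimension facts. Since $\Gamma=\PG(H,\fqn)$ has projective dimension $k-r-1$, the subspace $H$ has $\fqn$-dimension $k-r$, while $V$ and $\overline{V}$ both have $\fqn$-dimension $r$. As $\Gamma$ is disjoint from both $\Lambda$ and $\overline{\Lambda}$, we have $H\cap V=H\cap \overline{V}=\{0\}$, and the count $(k-r)+r=k=\dim_{\fqn}S^*$ yields the two direct sum decompositions $S^*=H\oplus V=H\oplus \overline{V}$. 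In particular, $V$ and $\overline{V}$ are both complements of $H$ in $S^*$.

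Next I would define $\omega$. Let $\pi\colon S^*\to \overline{V}$ be the $\fqn$-linear projection with kernel $H$ attached to the decomposition $S^*=H\oplus \overline{V}$, and set $\omega:=\pi|_V$. Because $V$ is a complement of $H=\ker\pi$, the restriction $\omega\colon V\to\overline{V}$ is an $\fqn$-linear isomorphism; it is precisely the algebraic counterpart of projecting $\Lambda$ from $\Gamma$ onto $\overline{\Lambda}$. It then remains to verify $\omega(U)=\overline{U}$, where $U=(S+H)\cap V$ and, by Theorem \ref{th:projection} applied with axis $\overline{\Lambda}$, $\overline{U}=(S+H)\cap \overline{V}$. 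For the inclusion $\omega(U)\subseteq\overline{U}$, take $u\in U$ and write $u=s+h$ with $s\in S$ and $h\in H$ (an $\fq$-decomposition, which suffices since $S+H$ is an $\fq$-subspace). Since $\pi$ vanishes on $H$, we get $\pi(u)=\pi(s)$; writing $s=h_s+\pi(s)$ with $h_s\in H$ gives $\pi(s)=s-h_s\in S+H$, so $\omega(u)=\pi(u)\in (S+H)\cap\overline{V}=\overline{U}$. The reverse inclusion follows by the symmetric argument applied to the projection $\pi'\colon S^*\to V$ with kernel $H$, which coincides with $\omega^{-1}$ on $\overline{V}$: it sends $\overline{U}$ into $U$, whence $\overline{U}\subseteq\omega(U)$. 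Thus $\omega(U)=\overline{U}$, and the induced projectivity $\varphi_\omega\colon\Lambda\to\overline{\Lambda}$ maps $L=L_U=\{\langle u\rangle_{\fqn}:u\in U^*\}$ onto $L_{\omega(U)}=L_{\overline{U}}=\overline{L}$, as required.

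The argument is essentially routine linear algebra, so I do not expect a substantive obstacle; the one point demanding care is the bookkeeping between the $\fq$-structure carried by $S$, $U$ and $\overline{U}$ and the $\fqn$-structure carried by $H$, $V$, $\overline{V}$ and the map $\omega$. One must make sure that the decompositions $u=s+h$ are taken over $\fq$ while the projection $\pi$ is genuinely $\fqn$-linear, so that $\omega$ is an $\fqn$-isomorphism inducing an honest projectivity, yet still restricts correctly to the $\fq$-subspaces $U$ and $\overline{U}$. Establishing the two inclusions with this distinction kept straight is the only place where a slip could occur.
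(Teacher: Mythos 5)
Your proof is correct and is essentially the paper's own argument: your $\omega=\pi|_V$ coincides with the paper's map $\phi_{\overline{V}}^{-1}\circ\phi_V$ built from the quotient isomorphisms $V\cong S^*/H\cong\overline{V}$, both resting on the same decompositions $S^*=H\oplus V=H\oplus\overline{V}$. The only cosmetic difference is that you verify $\omega(U)=\overline{U}$ by a two-inclusion element chase, whereas the paper observes directly that both $U+H$ and $\overline{U}+H$ equal $S+H$, so $U$ and $\overline{U}$ have the same image in $S^*/H$.
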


\begin{proof}
Since $\Lambda\cap\Gamma=\overline{\Lambda}\cap\Gamma=\emptyset$, we have $S^*=V\oplus H=\overline{V}\oplus H$ and hence the maps $\phi_V:v\in V\mapsto v+H\in S^*/H$, $\phi_{\overline{V}}:v\in\overline{V}\mapsto v+H\in S^*/H$ are $\fqn$-linear isomorphisms.
By \eqref{eq:rappr}, both the images $\phi_V(U)=U+H$ and $\phi_{\overline{V}}(\overline{U})=\overline{U}+H$ are equal to $S+H$.
Therefore, $\omega=\phi_{\overline{V}}^{-1}\circ\phi_{V}:V\to\overline{V}$ is an $\fqn$-linear isomorphism with $\omega(U)=\overline{U}$ and hence $\phi_\omega(L)=L_{\omega(U)}=\overline{L}$, i.e. $L$ and $\overline{L}$ are projectively equivalent.
\end{proof}

The concept of scattered linear sets was first generalized in \cite{Lunardon2017,ShVdV} to the concept of scattered linear sets with respect to the hyperplanes, and later in \cite{CMPZu} as in Definition \ref{def:hscattered}.

\begin{definition}\label{def:hscattered}
Let $L=L_U$ be an $\fq$-linear set in $\Lambda=PG(r-1,q^n)$, and $h\leq r$ be a positive integer.
The linear set $L$ is $h$-scattered if $\langle L\rangle=\Lambda$ and, for every $(h-1)$-dimensional subspace $\Omega$ of $\Lambda$, the weight $w_{L}(\Omega)$ is at most $h$.
If $L$ is an $(r-1)$-scattered $\fq$-linear set of $\Lambda$, then $L$ is also said to be scattered with respect to the hyperplanes.
\end{definition}

Recall from \cite[Proposition 2.1]{CMPZu} that, if $L$ is a $h$-scattered linear set, then $L$ is also an $m$-scattered linear set whenever $1\leq m\leq h$.
Note also that $1$-scattered linear sets of $\Lambda$ are exactly the scattered linear sets of $\Lambda$ which span $\Lambda$ over $\fqn$.

The following upper bound on the rank of a $h$-scattered linear set holds.

\begin{theorem}\label{th:bound}{\rm \cite[Theorem 2.3]{CMPZu}}
If $L_U$ is a $h$-scattered linear set of rank $k$ in $\Lambda=\PG(r-1,q^n)$, then one of the following holds:
\begin{itemize}
\item $k=r$ and $L_U$ is a subgeometry $\PG(r-1,q)$ of $\Lambda$;
\item $k\leq\frac{rn}{h+1}$.
\end{itemize}
\end{theorem}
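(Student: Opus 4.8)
The plan is to reformulate the statement purely in terms of the $\fq$-subspace $U$ and then generalise the dimension argument underlying Theorem \ref{blok}. Write $V=\fqn^{r}$, so that $\dim_{\fqn}V=r$, $\dim_{\fq}V=rn$, and $L_U$ corresponds to $U$ with $\dim_\fq U=k$ and $\langle U\rangle_{\fqn}=V$. Using the cited fact that an $h$-scattered linear set is $m$-scattered for every $1\le m\le h$, the hypothesis becomes the single clean statement that $\dim_{\fq}(U\cap W)\le \dim_{\fqn}W$ for every $\fqn$-subspace $W$ with $\dim_{\fqn}W\le h$. Since $\langle U\rangle_{\fqn}=V$ forces $k\ge r$, and since $k=r$ together with $\langle U\rangle_{\fqn}=V$ means precisely that $L_U$ is a subgeometry (the first alternative), I may assume $k>r$ and aim to prove $(h+1)k\le rn$.

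The engine is a multiplicative trick. Fix $\theta\in\fqn$ generating $\fqn$ over $\fq$, so that $1,\theta,\dots,\theta^{n-1}$ are $\fq$-independent, and consider the $\fq$-subspaces $U,\theta U,\dots,\theta^{h}U$ of $V$. I first recover the scattered case ($h=1$): if $0\ne x\in U\cap\theta U$ then $x=\theta y$ with $y\in U\setminus\{0\}$, and $y,\theta y$ are two $\fq$-independent vectors of $U$ on the single $\fqn$-point $\langle y\rangle_{\fqn}$, contradicting weight one; hence $U\cap\theta U=\{0\}$ and $2k=\dim_{\fq}(U\oplus\theta U)\le rn$. The goal for general $h$ is the analogous statement that the sum $U+\theta U+\cdots+\theta^{h}U$ is direct: once this is known, $(h+1)k=\dim_{\fq}\big(U\oplus\theta U\oplus\cdots\oplus\theta^{h}U\big)\le\dim_{\fq}V=rn$, which is exactly $k\le rn/(h+1)$.

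To prove directness I argue by contradiction through the scattered hypothesis. Suppose the sum is not direct and take a shortest nontrivial relation $a_0+\theta a_1+\cdots+\theta^{s}a_{s}=0$ with $a_i\in U$, $a_{s}\ne 0$ and $1\le s\le h$. Then $a_{s}\in\langle a_0,\dots,a_{s-1}\rangle_{\fqn}$, so $W:=\langle a_0,\dots,a_{s-1}\rangle_{\fqn}$ has $m:=\dim_{\fqn}W\le s\le h$ and is $\fqn$-spanned by vectors of $U$; a short check using the $\fq$-independence of $1,\theta,\dots,\theta^{s}$ rules out $m\le 1$, so $2\le m\le h$. Because $\langle U\cap W\rangle_{\fqn}=W$ we get $\dim_{\fq}(U\cap W)\ge m$, while the reformulated hypothesis gives $\dim_{\fq}(U\cap W)\le m$; hence $\dim_{\fq}(U\cap W)=m=\dim_{\fqn}W$ and $U\cap W$ is a \emph{full $\fq$-subgeometry} of $\PG(W)$. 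The crux is then to show that the presence of such a subgeometry is incompatible with $k>r$, i.e. that it can only occur when $L_U$ is globally a subgeometry. I would do this by choosing $W$ maximal with the property that $U$ induces a full subgeometry on it: if $\dim_{\fqn}W<r$ one picks $u\in U\setminus W$ and studies the $h$-dimensional windows $\langle U_1,u\rangle_{\fqn}$ for $\fq$-subspaces $U_1\subseteq U\cap W$ of dimension $h-1$; the weight bound forces each such window to carry a full subgeometry compatible with that on $W$, and the aim is to glue these to enlarge $W$, contradicting maximality and ultimately forcing $\dim_{\fqn}W=r$, whence $k=\dim_\fq(U\cap V)=r$.

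This last gluing step is the main obstacle. While the weight bound cleanly propagates the subgeometry structure across any $\fqn$-subspace of dimension at most $h$, the hypothesis gives no control once the spanned subspace reaches dimension $\ge h$, and extra vectors of $U$ could in principle make a larger subspace over-full rather than a subgeometry. Overcoming this requires a finer analysis of how the local $\fq$-structures induced on the various $h$-dimensional windows are forced to agree, equivalently reconstructing a single $\fq$-form on $\langle U\rangle_{\fqn}=V$, and it is here, rather than in the dimension count, that the real work lies. The boundary regime $h\ge n$ is consistent with this picture: there $rn/(h+1)<r\le k$ makes the numerical alternative impossible, so the theorem asserts that only subgeometries occur, which is exactly the output of the dichotomy above.
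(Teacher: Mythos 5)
Your reduction of the statement to the directness of $U+\theta U+\cdots+\theta^h U$, and the extraction from a minimal relation of a window $W=\langle a_0,\dots,a_{s-1}\rangle_{\fqn}$ on which $U$ induces a full subgeometry, are both sound (note that the paper under review does not prove this theorem at all --- it quotes it from \cite{CMPZu} --- so your argument has to stand on its own). The genuine gap is the step you yourself flag as ``the main obstacle'': you try to derive the contradiction from the mere \emph{presence} of a full subgeometry window, by gluing windows into a global subgeometry and contradicting $k>r$. That intermediate goal is false, so no amount of extra work will complete it. Full subgeometry windows exist in \emph{every} $h$-scattered linear set with $h\geq 2$, whatever its rank: if $u_1,u_2\in U$ are $\fqn$-independent and $W=\langle u_1,u_2\rangle_{\fqn}$, then $\dim_{\fq}(U\cap W)\geq 2$, and the $2$-scattered property (which $h$-scattered implies) forces equality, so $L_U\cap\PG(W,\fqn)$ is a subline $\PG(1,q)$, i.e.\ a full window with $m=2\leq h$. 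In particular, the maximum $h$-scattered sets of $h$-pseudoregulus type constructed in this very paper have $k=rn/(h+1)>r$ and are riddled with such windows, yet are not subgeometries. So ``local subgeometry windows'' can never be parlayed into ``global subgeometry''; the contradiction must come from the relation itself, not from the window it spans.

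And it does, by exactly the computation you already carried out to rule out $m\leq 1$: that check works verbatim for every $m\leq h$, which is the idea you are missing. All of $a_0,\dots,a_s$ lie in $U\cap W$ (including $a_s$, since $\theta^s a_s=-\sum_{i<s}\theta^i a_i\in W$). Because $\dim_{\fq}(U\cap W)=m=\dim_{\fqn}W$, an $\fq$-basis $b_1,\dots,b_m$ of $U\cap W$ is also an $\fqn$-basis of $W$; writing $a_i=\sum_j c_{ij}b_j$ with $c_{ij}\in\fq$ and substituting into the relation gives $\sum_{i=0}^{s}c_{ij}\theta^i=0$ for each $j$, whence (as $s\leq h\leq n-1$, so $1,\theta,\dots,\theta^s$ are $\fq$-independent) all $c_{ij}=0$ and all $a_i=0$, contradicting nontriviality. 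This proves directness, hence $(h+1)k\leq rn$, with no case distinction on $k$ and no gluing. Two loose ends then remain. First, the whole approach tacitly needs $h\leq n-1$: if $h\geq n$ then $1,\theta,\dots,\theta^h$ are $\fq$-dependent and directness is simply false; in that regime you must instead apply the bound just proved with $h'=n-1$ (an $h$-scattered set is $(n-1)$-scattered), obtaining $k\leq rn/n=r$, hence $k=r$ and $L_U$ is a subgeometry, which is the first alternative. Your closing sentence asserts that the case $h\geq n$ ``is exactly the output of the dichotomy above'', but that dichotomy is precisely the unproven (and unprovable) gluing step. Second, the reduction to $k>r$ is unnecessary: for $h\leq n-1$ the bound $k\leq rn/(h+1)$ holds for subgeometries as well, so in that range the second bullet always holds.
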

A $h$-scattered linear set of maximum rank is said to be a {\em maximum $h$-scattered} linear set.
Maximum $h$-scattered linear sets can be constructed by using the following result.

\begin{theorem}\label{th:directsum}{\rm \cite[Theorem 2.5]{CMPZu}}
Let $V=V_1\oplus \ldots \oplus V_t$, let
$L_{U_i}$ be a $h_i$-scattered $\fq$-linear set in $\PG(V_i,\mathbb{F}_{q^n})$, with $i\in \{1,\ldots,t\}$ and let
\[U=U_1\oplus \ldots \oplus U_t.\] 
The $\fq$-linear set $L_U$ is $h$-scattered in $\Lambda=\PG(V,\mathbb{F}_{q^n})$, with $h=\min\{h_1,\ldots,h_t\}$.
\end{theorem}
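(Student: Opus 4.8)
The plan is to reduce to two summands and then exploit the $\fq$-linear projections attached to the direct sum, combined with the fact that $h$-scatteredness is inherited at every lower level.

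First I would dispose of the spanning condition and the reduction. Since each $L_{U_i}$ is $h_i$-scattered we have $\langle U_i\rangle_{\fqn}=V_i$, so $\langle U\rangle_{\fqn}=\langle U_1\rangle_{\fqn}+\cdots+\langle U_t\rangle_{\fqn}=V$ and $\langle L_U\rangle=\Lambda$. Writing $V=V_1\oplus(V_2\oplus\cdots\oplus V_t)$ and $U=U_1\oplus(U_2\oplus\cdots\oplus U_t)$, an induction on $t$ (the inductive hypothesis giving that $L_{U_2\oplus\cdots\oplus U_t}$ is $\min\{h_2,\ldots,h_t\}$-scattered) shows that it suffices to treat the case $t=2$, because $\min\{h_1,\min\{h_2,\ldots,h_t\}\}=\min\{h_1,\ldots,h_t\}$. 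So I assume $V=V_1\oplus V_2$, $U=U_1\oplus U_2$, and set $h=\min\{h_1,h_2\}$.

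The one external input I would invoke is the remark recalled after Definition \ref{def:hscattered} (namely \cite[Proposition 2.1]{CMPZu}): each $L_{U_i}$, being $h_i$-scattered with $h\le h_i$, is $m$-scattered for every $1\le m\le h$. In subspace terms this says that for every $\fqn$-subspace $W_i$ of $V_i$ with $\dim_{\fqn}W_i\le h$ one has $\dim_{\fq}(U_i\cap W_i)\le\dim_{\fqn}W_i$. This is the quantitative fact driving the whole estimate. Now fix an $\fqn$-subspace $W$ of $V$ with $\dim_{\fqn}W=h$; the goal is $\dim_{\fq}(U\cap W)\le h$. Let $\pi_1\colon V\to V_1$ be the projection with kernel $V_2$, put $T=U\cap W$ and $W_2=W\cap V_2$, and apply rank–nullity over $\fq$ to the restriction $\pi_1|_T$, obtaining $\dim_{\fq}T=\dim_{\fq}(T\cap V_2)+\dim_{\fq}\pi_1(T)$.

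It then remains to bound the two terms. For the kernel, $T\cap V_2=(U\cap V_2)\cap(W\cap V_2)=U_2\cap W_2$, so the quantitative fact applied to $L_{U_2}$ gives $\dim_{\fq}(T\cap V_2)\le\dim_{\fqn}W_2$. For the image, $\pi_1(T)\subseteq\pi_1(U)\cap\pi_1(W)=U_1\cap\pi_1(W)$, and since $\ker(\pi_1|_W)=W\cap V_2=W_2$ we have $\dim_{\fqn}\pi_1(W)=h-\dim_{\fqn}W_2\le h$; hence the fact applied to $L_{U_1}$ yields $\dim_{\fq}\pi_1(T)\le h-\dim_{\fqn}W_2$. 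Adding the two bounds makes the term $\dim_{\fqn}W_2$ cancel, leaving $\dim_{\fq}T\le h$, which is exactly $w_{L_U}(\Omega)\le h$ for the corresponding $(h-1)$-subspace $\Omega$. The main obstacle, and the only place requiring care, is precisely this bookkeeping: the cancellation relies on the exact identity $\dim_{\fqn}\pi_1(W)+\dim_{\fqn}(W\cap V_2)=\dim_{\fqn}W$ together with the inheritance of scatteredness down to all levels $\le h$, so that both $W_2$ and $\pi_1(W)$ (each of $\fqn$-dimension at most $h$) are controlled by $L_{U_2}$ and $L_{U_1}$ respectively.
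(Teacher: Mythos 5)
The paper itself contains no proof of this theorem: it is quoted verbatim from \cite[Theorem 2.5]{CMPZu}, so there is no internal argument to compare yours against, and I can only assess your proof on its own merits --- it is correct. The spanning condition and the inductive reduction to $t=2$ are sound, and the single external fact you invoke, namely that an $h_i$-scattered linear set is $m$-scattered for every $1\le m\le h_i$ (equivalently, $\dim_{\fq}(U_i\cap W_i)\le\dim_{\fqn}W_i$ whenever $\dim_{\fqn}W_i\le h_i$), is precisely \cite[Proposition 2.1]{CMPZu}, which the paper recalls right after Definition \ref{def:hscattered}, so it is a legitimate input. The core computation is also exact: with $T=U\cap W$, $W_2=W\cap V_2$ and $\pi_1$ the projection onto $V_1$ with kernel $V_2$, one has $U\cap V_2=U_2$ (hence $T\cap V_2=U_2\cap W_2$), $\pi_1(U)=U_1$ (hence $\pi_1(T)\subseteq U_1\cap\pi_1(W)$), and $\dim_{\fqn}\pi_1(W)=h-\dim_{\fqn}W_2$; rank--nullity over $\fq$ applied to $\pi_1|_T$ then gives $\dim_{\fq}T\le\dim_{\fqn}W_2+\bigl(h-\dim_{\fqn}W_2\bigr)=h$, which is exactly the weight bound required by Definition \ref{def:hscattered}. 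This projection plus rank--nullity argument is the natural one and is, in substance, the argument of the cited reference, so nothing further is needed.
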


Furthermore, the projective equivalence for $h$-scattered linear sets coincides with the equivalence of the corresponding subspaces when $h\geq 2$.

\begin{theorem}\label{th:simple}{\rm \cite[Theorem 4.5]{CMPZu}}
Let $h\geq2$. Two $h$-scattered $\fq$-linear sets $L_U$ and $L_W$ of $\mathrm{PG}(r-1,q^n)$ are
	$\mathrm{P}\Gamma\mathrm{L}(r,q^n)$-equivalent if and only if $U$ and $W$ are $\Gamma\mathrm{L}(r,q^n)$-equivalent.
\end{theorem}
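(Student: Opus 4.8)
The plan is to prove the two implications separately, the point being that only one of them needs $h\geq2$. The ``if'' direction is routine and valid for every $h$: if $f\in\Gamma\mathrm{L}(r,q^n)$ satisfies $f(U)=W$, then the collineation $\varphi_f$ it induces sends $L_U$ onto $L_{f(U)}=L_W$, so $L_U$ and $L_W$ are $\mathrm{P}\Gamma\mathrm{L}(r,q^n)$-equivalent; I would dispose of this in one line. For the ``only if'' direction, suppose $\varphi\in\mathrm{P}\Gamma\mathrm{L}(r,q^n)$ maps $L_U$ onto $L_W$ and lift $\varphi$ to a semilinear map $g\in\Gamma\mathrm{L}(r,q^n)$. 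Since $g$ preserves dimensions, weights and spanning, the subspace $U':=g(U)$ is again $h$-scattered and $L_{U'}=\varphi(L_U)=L_W$. Thus the entire statement reduces to the rigidity claim: \emph{if two $\fq$-subspaces $U'$ and $W$ define the same $h$-scattered linear set with $h\geq2$, then $U'=\lambda W$ for some $\lambda\in\fqn^*$}. Granting this, $\lambda^{-1}g$ is semilinear and sends $U$ to $W$, which is precisely the required $\Gamma\mathrm{L}(r,q^n)$-equivalence.

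To set up the claim I would argue pointwise. Both $U'$ and $W$ are in particular scattered (the case $m=1$ of the monotonicity recalled after Definition \ref{def:hscattered}), so every point of $L_{U'}=L_W$ has weight one; hence for each such point $P=\langle v\rangle_{\fqn}$ the intersections $W\cap\langle v\rangle_{\fqn}$ and $U'\cap\langle v\rangle_{\fqn}$ are $\fq$-lines, and a generator of the latter is an $\fqn$-multiple $\mu_P w_P$ of a generator $w_P$ of the former. Fix an $\fq$-basis $w_1,\dots,w_k$ of $W$; because $L_W$ is scattered these vectors lie on pairwise distinct, hence pairwise $\fqn$-independent, points, and I obtain scalars $\mu_1,\dots,\mu_k\in\fqn^*$ with $\mu_i w_i\in U'$. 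Since $U'$ has the same rank $k$ as $W$, these $k$ vectors form an $\fq$-basis of $U'$. It remains to show that the $\mu_i$ coincide up to an $\fq$-factor.

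This is the crux, and it is exactly where $h\geq2$ enters. For $i\neq j$ consider the line $\langle w_i,w_j\rangle$: since $L_W$ and $L_{U'}$ are $2$-scattered, this line has weight at most $2$, which forces
\[
W\cap\langle w_i,w_j\rangle_{\fqn}=\langle w_i,w_j\rangle_{\fq},\qquad U'\cap\langle w_i,w_j\rangle_{\fqn}=\langle \mu_i w_i,\mu_j w_j\rangle_{\fq}.
\]
Now $w_i+w_j\in W$ determines a point of $L_{U'}$, so some nonzero $a\mu_i w_i+b\mu_j w_j\in U'$ with $a,b\in\fq$ is an $\fqn$-multiple $c(w_i+w_j)$ of $w_i+w_j$; comparing coordinates in the $\fqn$-independent pair $w_i,w_j$ gives $a\mu_i=b\mu_j=c\neq0$, whence $a,b\in\fq^*$ and $\mu_i/\mu_j\in\fq^*$. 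Writing $\mu_i=c_i\mu_1$ with $c_i\in\fq^*$ and replacing each $w_i$ by $c_iw_i$ (which leaves $W$ unchanged), I conclude $U'=\langle \mu_1 c_1 w_1,\dots,\mu_1 c_k w_k\rangle_{\fq}=\mu_1 W$, so $\lambda=\mu_1$ proves the claim.

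I expect the only genuine obstacle to be the clean use of $2$-scatteredness on the lines $\langle w_i,w_j\rangle$ to pin down the intersection subspaces and thereby the scalars; the reduction via the lift $g$ and the pointwise bookkeeping are formal. It is worth stressing that the argument breaks irreparably for $h=1$: without a bound on the weight of lines one cannot control the ratios $\mu_i/\mu_j$, and indeed inequivalent subspaces can produce the same scattered linear set, so the hypothesis $h\geq2$ is essential.
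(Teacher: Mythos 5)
Your proof is correct, but note that the comparison you were asked for is vacuous here: the paper contains no proof of this statement, since Theorem \ref{th:simple} is quoted directly from \cite[Theorem 4.5]{CMPZu}, so the only benchmark is that reference, and your argument is in substance the standard one used there. The structure you chose is exactly the right one: dispose of the ``if'' direction trivially, lift the collineation to $g\in\Gamma\mathrm{L}(r,q^n)$ so that everything reduces to the rigidity (``simplicity'') claim that two $\fq$-subspaces defining the \emph{same} $h$-scattered linear set with $h\geq2$ are proportional over $\fqn$, and prove that claim by combining weight-one points (producing the scalars $\mu_i$ with $\mu_iw_i\in U'$) with the weight-two lines $\langle w_i,w_j\rangle$, on which $2$-scatteredness forces $U'\cap\langle w_i,w_j\rangle_{\fqn}=\langle\mu_iw_i,\mu_jw_j\rangle_{\fq}$, so that the test point $\langle w_i+w_j\rangle_{\fqn}$ yields $\mu_i/\mu_j\in\fq^*$. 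Two harmless bookkeeping points: the equality of the ranks of $U'$ and $W$ deserves its one-line justification (both sets are in particular $1$-scattered and coincide as point sets, so $(q^{k'}-1)/(q-1)=(q^{k}-1)/(q-1)$); and your assertion that $\mu_1w_1,\ldots,\mu_kw_k$ is an $\fq$-basis of $U'$ is premature where it is made, since vectors of $U'$ lying on distinct points need not be $\fq$-independent before one knows the $\mu_i$ are proportional --- but you never use it, and your closing step ($\mu_1W\subseteq U'$ plus a dimension count) is complete as written. Your final remark is also accurate: for $h=1$ the statement genuinely fails, e.g.\ $U=\{(x,x^{q})\colon x\in\fqn\}$ and $W=\{(x,x^{q^{2}})\colon x\in\fqn\}$ define the same scattered linear set of $\PG(1,q^n)$ for odd $n\geq 5$ (both equal $\{\langle(1,t)\rangle_{\fqn}\colon t^{(q^n-1)/(q-1)}=1\}$), yet by Theorems \ref{cod} and \ref{th:equivmon} they are not $\Gamma\mathrm{L}(2,q^n)$-equivalent.
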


\subsection{Rank metric codes}

Rank metric codes were introduced by Delsarte \cite{Delsarte} in 1978 and they have been intensively investigated in recent years for their applications; we refer to \cite{sheekey_newest_preprint} for a recent survey on this topic.
The set of $m \times n$ matrices $\fq^{m\times n}$ over $\fq$ may be endowed with a metric, called \emph{rank metric}, defined by
\[d(A,B) = \mathrm{rk}\,(A-B)\]
for any $A,B \in \fq^{m\times n}$.
A subset $\C \subseteq \fq^{m\times n}$ equipped with the rank metric is called a \emph{rank metric code} (or \emph{RM}-code for short).
The minimum distance of $\C$ is
\[d = \min\{ d(A,B) \colon A,B \in \C,\,\, A\neq B \}.\]
We are interested in RM-codes which are $\fq$-\emph{linear}, i.e. $\fq$-subspaces of $\fq^{m\times n}$.
In \cite{Delsarte}, Delsarte showed that the parameters of these codes must obey a Singleton-like bound, i.e.
\[ |\C| \leq q^{\max\{m,n\}(\min\{m,n\}-d+1)}. \]
When the equality holds, we call $\C$ a \emph{maximum rank distance} (\emph{MRD} for short) code.
Examples of $\fq$-linear MRD-codes were first found by Delsarte in \cite{Delsarte} and rediscovered by Gabidulin in \cite{Gabidulin}; although these codes have been originally found out by Delsarte, they are called \emph{Gabidulin codes} since Gabidulin's publication contributed significantly to the development of rank metric codes.
Two $\fq$-linear RM-codes $\C$ and $\C'$ are equivalent if and only if there exist $X \in \mathrm{GL}(m,q)$, $Y \in \mathrm{GL}(n,q)$ and a field automorphism $\sigma$ of $\fq$ such that
\[\C'=\{XC^\sigma Y \colon C \in \C\}.\]

Let $\cC\subseteq \fq^{m \times n}$ be an RM-code, the \emph{adjoint code} of $\C$ is
\[ \C^\top =\{C^t \colon C \in \C\}. \]
Clearly, if $\C$ is an MRD-code then $\C^\top$ is MRD.

Also, the \emph{left} and \emph{right} idealisers of $\C$ are defined as $L(\C)=\{A \in \mathrm{GL}(m,q) \colon A \C\subseteq \C\}$ and $R(\C)=\{B \in \mathrm{GL}(n,q) \colon \C B \subseteq \C\}$. They are invariant under the equivalence of rank metric codes.
Further invariants have been introduced in \cite{GiuZ,NPH2}.
In particular in \cite[Section 4]{GiuZ}, the authors introduced the \emph{Gabidulin index} of a rank metric code $\C$ as the maximum dimension of a subcode $\cG \subseteq \cC$ equivalent to a generalized Gabidulin code. The Gabidulin index has been calculated for the known families of MRD-codes with maximum left idealiser, see \cite[Theorem 4.2]{GiuZ}.

Much of the focus on MRD-codes of $\fq^{n\times n}$ to date has been on codes which are $\fqn$-\emph{linear}, i.e. codes in which the
left (or right) idealiser contains a field isomorphic to $\fqn$, since for these codes a fast decoding algorithm has been developed in \cite{Gabidulin}. Therefore, from a cryptographic point of view, it is very important to have many different examples of $\fqn$-linear MRD-codes.
Unfortunately, very few examples are known, see \cite{BZZ,CMPZa,CMPZ,CsMZ2018,Delsarte,Gabidulin,LP2001,MMZ,Sheekey2016,ZZ}.

From now on, we identify $\fq^{n\times n}$ with the algebra $\mathrm{End}_{\fq}(\fqn)$.
Since $\mathrm{End}_{\fq}(\fqn)$ is isomorphic to the ring of $q$-polynomials over $\fqn$ 
\[\cL_{n,q}=\left\{\sum_{i=0}^{n-1}a_i x^{q^i}\colon a_i \in \fqn\right\},\]
with addition and composition modulo $x^{q^n}-x$ as operations, we will consider an $\fq$-linear RM-code $\cC$ as an $\fq$-subspace of $\cL_{n,q}$.
In this setting, two $\fq$-linear MRD-codes $\cC_1$ and $\cC_2$ are  equivalent if and only if there exist two invertible $q$-polynomials  $\varphi_1$, $\varphi_2\in \cL_{n,q}$ and $\rho\in \mathrm{Aut}(\fq)$ such that
\[ \varphi_1\circ f^\rho \circ \varphi_2 \in \cC_2 \text{ for all }f\in \cC_1,\]
where $\circ$ stands for the composition of maps and $f^\rho(x)= \sum a_i^\rho x^{q^i}$ for $f(x)=\sum a_i x^{q^i}$.

The family of MRD-codes first found by Delsarte in \cite{Delsarte}, then by Gabidulin in \cite{Gabidulin} and generalized by Kshevetskiy and Gabidulin in \cite{kshevetskiy_new_2005}, can be described as follows:
\[ \mathcal{G}_{k,s}=\langle x,x^{q^s},\ldots,x^{q^{s(k-1)}} \rangle_{\fqn}\subseteq \mathcal{L}_{n,q} \]
with $k\leq n-1$ and $\gcd(s,n)=1$.

\begin{remark}
Let $\mathrm{Tr}_{q^n/q}:\fqn\to\fq$ be the trace map $x\mapsto x+x^q+\ldots+x^{q^{n-1}}$.
The \emph{adjoint} of a $q$-polynomial $f(x)=\sum_{i=0}^{n-1}a_i x^{q^i}\in\cL_{n,q}$ with respect to the bilinear form $\langle x,y\rangle=\mathrm{Tr}_{q^n/q}(xy)$
%\footnote{Where $\mathrm{Tr}_{q^n/q}(x)=x+x^q+\ldots+x^{q^{n-1}}$ denotes the $\F_{q^n} \rightarrow \F_q$ trace function.}$\mathrm{)}$,
is given by
\[\hat{f}(x)=\sum_{i=0}^{n-1}a_{i}^{q^{n-i}} x^{q^{n-i}}.\]
\end{remark}

For a rank metric code $\cC$ given by a set of linearized polynomials, its adjoint code may be seen as
\[\cC^\top= \{\hat{f}\colon f\in\cC\},\] 
and the left and right idealisers of $\cC$ can be written as:
\[L(\cC)= \{ \varphi \in \cL_{n,q}\colon \varphi \circ f \in \cC \text{ for all }f\in \cC \},\]
\[R(\cC)= \{ \varphi \in \cL_{n,q}\colon f \circ \varphi \in \cC \text{ for all }f\in \cC \}.\]

In the case of RM-codes generated by monomials, the equivalence problem is completely solved as follows.
\begin{theorem}{\rm \cite[Theorem 2.3]{CMPZ}}\label{th:equivmon}
For $j=1,2$, let $I_j$ be a $k$-subset of $\{0,\ldots,n-1\}$ and
\[\mathcal{C}_j=\langle x^{q^i}\colon i\in I_j\rangle_{\fqn}.\]
Then $\mathcal{C}_1$ and $\mathcal{C}_2$ are equivalent if and only if
\[I_1=I_2+s=\{i+s\pmod n\,\colon\, i\in I_2\}\]
for some $s\in\{0,\ldots,n-1\}$.
\end{theorem}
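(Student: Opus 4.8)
The statement to prove is Theorem~\ref{th:equivmon}, characterizing when two monomial RM-codes $\mathcal{C}_1=\langle x^{q^i}\colon i\in I_1\rangle_{\fqn}$ and $\mathcal{C}_2=\langle x^{q^i}\colon i\in I_2\rangle_{\fqn}$ are equivalent. The plan is to prove the two directions separately. The easy direction is sufficiency: if $I_1=I_2+s \pmod n$, I would exhibit an explicit equivalence. Composing each generator $x^{q^i}$ with the invertible $q$-polynomial $\varphi_2(x)=x^{q^s}$ on the right sends $x^{q^i}\mapsto x^{q^{i+s}}$, and reducing the exponent modulo $n$ (using $x^{q^n}=x$ on $\fqn$) turns $\langle x^{q^i}\colon i\in I_2\rangle$ into $\langle x^{q^j}\colon j\in I_2+s\rangle=\mathcal{C}_1$. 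So the shift $s$ is realized by a right multiplication alone, with $\varphi_1=\mathrm{id}$ and $\rho$ trivial; this settles one implication immediately.

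For the necessity direction, I would assume an equivalence, i.e. invertible $q$-polynomials $\varphi_1,\varphi_2\in\cL_{n,q}$ and $\rho\in\mathrm{Aut}(\fq)$ with $\varphi_1\circ f^\rho\circ\varphi_2\in\mathcal{C}_2$ for every $f\in\mathcal{C}_1$, and deduce $I_1=I_2+s$. The central idea is to track how each piece of the equivalence permutes exponents. Writing $\varphi_1(x)=\sum_j b_j x^{q^j}$ and $\varphi_2(x)=\sum_\ell c_\ell x^{q^\ell}$, and recalling that the Frobenius twist $f\mapsto f^\rho$ only raises the coefficients of $f$ (leaving the exponent support $I_1$ fixed), I would compute the exponent support of the composition $\varphi_1\circ (x^{q^i})^\rho\circ\varphi_2$ as a sumset: the exponents appearing are $\{\,j+i+\ell \pmod n\,\}$ over all $j$ in the support of $\varphi_1$ and $\ell$ in the support of $\varphi_2$. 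Requiring every such composition (and all $\fqn$-linear combinations of generators in $\mathcal{C}_1$) to land in the $k$-dimensional monomial code $\mathcal{C}_2=\langle x^{q^i}\colon i\in I_2\rangle$ forces these supports to collapse onto $I_2$.

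The main obstacle, and the heart of the argument, is ruling out that $\varphi_1$ or $\varphi_2$ genuinely mixes several Frobenius exponents: a priori the idealisers of a monomial code could be large, and one must show that for an equivalence to preserve the monomial structure, the conjugating maps must essentially be monomials themselves (up to the freedom that does not change the code). I expect to argue this via a dimension or support-counting bound: if $\varphi_1$ or $\varphi_2$ had support of size greater than one, the sumset defining the image of $\mathcal{C}_1$ would have cardinality strictly exceeding $k=|I_2|$, contradicting $\dim_{\fqn}\mathcal{C}_2=k$ together with the fact that distinct monomials $x^{q^a}$ are $\fqn$-linearly independent in $\cL_{n,q}$. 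Once both $\varphi_1$ and $\varphi_2$ are forced to be (scalar multiples of) single powers $x^{q^{j_0}}$ and $x^{q^{\ell_0}}$, the equivalence reduces exactly to the exponent shift $I_1=I_2+(j_0+\ell_0) \pmod n$, giving the required $s$. I would be careful to invoke the invertibility of $\varphi_1,\varphi_2$ to guarantee their leading and trailing coefficients are nonzero and that no cancellation modulo $x^{q^n}-x$ artificially shrinks the support.
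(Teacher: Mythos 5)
Your ``if'' direction is correct, and note at the outset that the paper itself gives no proof of this statement: it is quoted from \cite[Theorem 2.3]{CMPZ}, so your attempt can only be measured against a correct reconstruction of that argument. The genuine gap is in your ``only if'' direction, precisely at the step you yourself call the heart of the argument. Two things go wrong there. First, invertibility of $\varphi_1,\varphi_2$ does \emph{not} rule out cancellation: writing $\varphi_1=\sum_j b_jx^{q^j}$ and $\varphi_2=\sum_\ell c_\ell x^{q^\ell}$, the coefficient of $x^{q^m}$ in $\varphi_1\circ x^{q^i}\circ\varphi_2$ is the sum of the cross terms $b_j\,c_\ell^{\,q^{i+j}}$ over all pairs with $j+\ell\equiv m-i\pmod n$, and such a sum can vanish even when every $b_j,c_\ell$ involved is nonzero. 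Second, the structural conclusion you aim for --- that $\varphi_1$ and $\varphi_2$ must essentially be monomials --- is false, so no counting argument can establish it: for $n=6$, $q$ odd, $I_1=I_2=\{0,2,4\}$, the map $\varphi_1(x)=x+x^{q^2}$ is invertible on $\mathbb{F}_{q^6}$ and $\varphi_1\circ f\circ x\in\cC_2$ for every $f\in\cC_1$. This same example refutes your counting step: here $\varphi_1$ has support of size $2$, yet the sumset $I_1+\{0,2\}+\{0\}=\{0,2,4\}$ has exactly $k=3$ elements, because sumsets in $\mathbb{Z}_n$ do not grow when the summands share a common period. So ``support bigger than one forces the sumset to exceed $k$'' fails exactly when $I_2$ is stabilized by a nontrivial shift.

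The missing idea that repairs the proof is to exploit the full $\fqn$-span of each generator rather than the generators alone. For $\alpha\in\fqn$, $i\in I_1$ and $m\notin I_2$, membership of $\varphi_1\circ(\alpha x^{q^i})^\rho\circ\varphi_2$ in $\cC_2$ says that
\[
\sum_{j=0}^{n-1} b_j\, c_{m-i-j}^{\,q^{i+j}}\,\beta^{q^j}=0, \qquad \beta=\alpha^{\rho},
\]
with the index $m-i-j$ read modulo $n$. As $\alpha$ runs over $\fqn$ so does $\beta$, and the left-hand side is a $q$-polynomial in $\beta$ of $q$-degree at most $n-1$; vanishing on all $q^n$ elements of $\fqn$ forces every coefficient to vanish, i.e. $b_j\,c_{m-i-j}=0$ for all $j$. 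This is what genuinely eliminates cancellation (invertibility plays no role). Consequently $i+j+\ell\pmod n$ lies in $I_2$ for every $i\in I_1$, every $j$ with $b_j\neq0$ and every $\ell$ with $c_\ell\neq0$. Now no statement about the size of the supports is needed: fix a single $j_0$ with $b_{j_0}\neq0$ and a single $\ell_0$ with $c_{\ell_0}\neq0$; then $I_1+(j_0+\ell_0)\subseteq I_2$, and since both sets have $k$ elements, $I_1+(j_0+\ell_0)=I_2$, i.e. $I_1=I_2+s$ with $s\equiv-(j_0+\ell_0)\pmod n$. With this replacement of your central step, the rest of your outline (including the sufficiency direction via right composition with $x^{q^s}$) yields a complete proof.
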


Sheekey in \cite{Sheekey2016} proved that some of these codes are connected to \emph{maximum scattered linear sets} of the projective line; for further connections with linear sets see also \cite{CMPZu,CSMPZ2016,Lunardon2017,ShVdV}.
In particular, in \cite[Section 2.7]{Lunardon2017} and in \cite{ShVdV} the connection between $\fqn$-linear MRD-codes and maximum $(r-1)$-scattered $\fq$-linear sets in $\PG(r-1,q^n)$ has been pointed out, see also \cite[Section 4.1]{CMPZu}.

\begin{theorem} {\rm \cite[Proposition 3.5]{ShVdV}}
	\label{cod}
	$\cC$ is an $\fq$-linear MRD-code of $\cL_{n,q}$ with minimum distance $n-r+1$ and with left idealiser isomorphic to $\fqn$ if and only if, up to equivalence,
	\begin{equation}\label{eq:C}
	\cC=\langle f_1(x),\ldots,f_r(x)\rangle_{\fqn}
	\end{equation}
	for some $f_1,f_2,\ldots,f_r \in \cL_{n,q}$	and the $\fq$-subspace
	\[U_{\cC}=\{(f_1(x),\ldots,f_r(x)) \colon x\in \fqn\}\]
	defines a maximum $(r-1)$-scattered $\fq$-linear set of $\PG(\fqn^r,\fqn)$.
Furthermore, two $\fq$-linear MRD-codes $\cC$ and $\cC'$ of $\cL_{n,q}$ with minimum distance $n-r+1$ of type \eqref{eq:C} are equivalent if and only if $U_{\cC}$ and $U_{\cC'}$ are $\Gamma\mathrm{L}(r,q^n)$-equivalent.
\end{theorem}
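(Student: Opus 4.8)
The plan is to build a dictionary between the code $\cC$ and the $\fq$-subspace $U_\cC$ that turns the \emph{rank} of a codeword into the \emph{weight} of a hyperplane, and then to read the MRD condition and the scattering condition off one another. Throughout I identify $\cL_{n,q}$ with $\mathrm{End}_{\fq}(\fqn)$.

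First I would unwind the idealiser hypothesis. The scalar maps $x\mapsto\lambda x$ ($\lambda\in\fqn$) act on $\cL_{n,q}$ by $f\mapsto\lambda f$, so any subfield of $L(\cC)$ isomorphic to $\fqn$ is, up to conjugation (hence up to equivalence), exactly this field of scalars; thus $L(\cC)\cong\fqn$ is equivalent to $\cC$ being an $\fqn$-subspace of $\cL_{n,q}$. For an MRD code with $d=n-r+1$ the Singleton bound gives $|\cC|=q^{n(n-d+1)}=q^{nr}$, i.e.\ $\dim_{\fqn}\cC=r$ and $\cC=\langle f_1,\dots,f_r\rangle_{\fqn}$; conversely an $\fqn$-linear code of dimension $r$ has $\fq$-dimension $nr$, so it is MRD precisely when its minimum distance equals $n-r+1$. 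Still up to equivalence I would arrange that the identity $x$ lies in $\cC$: the (Delsarte) rank distribution of an $\fqn$-linear MRD code of these parameters forces the existence of a full-rank element, and left-composing the code with its inverse places $x$ into it. This guarantees $\bigcap_i\Ker f_i=0$, so the evaluation map $\Phi\colon x\mapsto(f_1(x),\dots,f_r(x))$ is injective and $U_\cC$ has rank $n$; by Theorem~\ref{th:bound} this is exactly the maximal rank $\tfrac{rn}{(r-1)+1}=n$ allowed for an $(r-1)$-scattered linear set.

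The core is the weight--rank computation. For a hyperplane $\Omega=\PG(W,\fqn)$ with $W=\{v:\sum_i c_iv_i=0\}$ and the codeword $g=\sum_i c_if_i\in\cC$, one checks directly that $U_\cC\cap W=\Phi(\Ker g)$, whence, by injectivity of $\Phi$, $w_{L_{U_\cC}}(\Omega)=\dim_{\fq}\Ker g=n-\mathrm{rk}(g)$. Letting $(c_1,\dots,c_r)$ range over the nonzero vectors identifies the nonzero codewords with the hyperplanes of $\Lambda$, so ``every nonzero codeword has rank $\ge n-r+1$'' is the same statement as ``every hyperplane has weight $\le r-1$''. The first is exactly $d=n-r+1$ (MRD), and the second, together with the maximal rank $n$, is exactly that $L_{U_\cC}$ is maximum $(r-1)$-scattered. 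The same computation forces the $f_i$ to be $\fqn$-independent on the scattered side: an $\fqn$-dependence $\sum c_if_i=0$ gives a hyperplane $\Omega$ containing all of $U_\cC$, so $\langle L_{U_\cC}\rangle\ne\Lambda$, contradicting the spanning requirement of Definition~\ref{def:hscattered}. This establishes the biconditional.

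For the equivalence statement I would track how each ingredient of a code equivalence $f\mapsto\varphi_1\circ f^\rho\circ\varphi_2$ acts on $U_\cC$. A change of $\fqn$-basis of $\cC$ realizes $A\in\mathrm{GL}(r,q^n)$ on $U_\cC$; a coefficient automorphism $\sigma$ sends $\cC$ to $\cC^\sigma$ and $U_\cC$ to $\sigma(U_\cC)$; right-composition $f\mapsto f\circ\varphi_2$ leaves $U_\cC$ unchanged as a set because $\varphi_2$ is bijective. The delicate point, which I expect to be the main obstacle, is the left factor $\varphi_1$, a priori an arbitrary invertible $q$-polynomial: since the equivalence must carry $L(\cC)\cong\fqn$ to $L(\cC')\cong\fqn$, the map $\varphi_1$ must normalize the torus of scalar maps, and a normalizer computation (the normalizer of the Singer torus in $\mathrm{GL}(n,q)$ is $\fqn^\ast\rtimes\gal(\fqn/\fq)$) shows $\varphi_1$ is a monomial $ax^{q^i}$. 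Such a $\varphi_1$ acts on $U_\cC$ as the scalar $aI\in\mathrm{GL}(r,q^n)$ followed by the coordinatewise Frobenius $y\mapsto y^{q^i}$, again an element of $\Gamma\mathrm{L}(r,q^n)$. Hence every admissible code equivalence induces a $\Gamma\mathrm{L}(r,q^n)$-map taking $U_\cC$ to $U_{\cC'}$, and conversely any $\psi=A\sigma\in\Gamma\mathrm{L}(r,q^n)$ with $\psi(U_\cC)=U_{\cC'}$ can be read back through $\Phi$ to produce the corresponding code equivalence, yielding the claimed correspondence. (Alternatively, once this dictionary is in place one may invoke Theorem~\ref{th:simple}, valid since $h=r-1\ge2$, to phrase the equivalence through projective equivalence of the associated linear sets.)
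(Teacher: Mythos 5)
A preliminary remark: the paper does not prove Theorem \ref{cod} at all --- it quotes it verbatim from \cite[Proposition 3.5]{ShVdV} --- so there is no internal proof to compare yours against, and I assess your argument on its own terms. Your central dictionary is correct and is precisely the mechanism used in the literature: for $g=\sum_i c_i f_i$ and the hyperplane $\Omega$ defined by $\sum_i c_i v_i=0$ one has $U_{\cC}\cap W=\Phi(\Ker g)$, hence $w_{L_{U_\cC}}(\Omega)=n-\mathrm{rk}\,(g)$ once $\Phi$ is injective, and this converts ``all nonzero codewords have rank $\geq n-r+1$'' into ``all hyperplanes have weight $\leq r-1$''. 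The structure of your treatment of the equivalence statement (basis change $\leftrightarrow\mathrm{GL}(r,q^n)$, coefficient automorphism $\leftrightarrow$ coordinatewise semilinearity, right factor invisible, left factor controlled by a normalizer computation) is also the standard and correct one.

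There are, however, two gaps. The minor one: your normalization ``left-compose by $g^{-1}$ so that $x\in\cC$'' should be deleted. As written it is illegitimate, because replacing $\cC$ by $g^{-1}\circ\cC$ conjugates the left idealiser to $g^{-1}Kg$ (where $K$ is the field of scalar maps $x\mapsto\lambda x$), so the new code is in general no longer $\fqn$-linear and the form \eqref{eq:C} you had just arranged is destroyed. It is also unnecessary: by Remark \ref{rem:MRDpol} there is $g=\sum_i c_i f_i\in\cC$ with $\Ker g=\{0\}$, and then $\bigcap_i \Ker f_i\subseteq \Ker g=\{0\}$ already makes $\Phi$ injective. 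The substantive one: in several places you use that for a type-\eqref{eq:C} MRD code the left idealiser \emph{equals} $K$, whereas the definition only yields $L(\cC)\supseteq K$. This equality is exactly what the ``if'' direction of the first claim must produce (the conclusion is ``idealiser isomorphic to $\fqn$'', not ``containing a copy of $\fqn$''), and it is load-bearing in your normalizer step: you infer $\varphi_1\in N(K^{*})$, hence $\varphi_1=ax^{q^i}$, from $\varphi_1 K\varphi_1^{-1}=L(\cC')=K$; if $L(\cC')$ merely contained $K$, this inference would fail and $\varphi_1$ need not be monomial. The equality is true but not formal: one needs that the left idealiser of an MRD code with minimum distance at least $2$ is a field (see \cite{LTZ2}), after which any field containing $K$ is commutative, hence lies in the centralizer of $K$, which is $K$ itself (the $\fqn$-linear endomorphisms of the one-dimensional $\fqn$-space $\fqn$ are the scalars). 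That some such input is genuinely needed is shown by the degenerate case $r=n$: the code $\cC=\cL_{n,q}$ is MRD of the form \eqref{eq:C} with $d=1$, yet its left idealiser is all of $\mathrm{GL}(n,q)$, so the theorem implicitly requires $d\geq 2$ and your claimed equivalence ``$L(\cC)\cong\fqn$ iff $\cC$ is an $\fqn$-subspace'' cannot be established by the conjugacy argument alone. With the first step removed and the second supplied by the citation, your proof is complete.
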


%The last part follows by \cite[Proposition 3.5]{ShVdV}.

\begin{remark}\label{rem:MRDpol}
If
\[\cC=\langle f_1(x),\ldots,f_r(x)\rangle_{\fqn} \subseteq \cL_{n,q}\]
is an MRD-code, then 
\begin{itemize}
    \item $\dim_{\fq} \ker f\leq r-1$ for each $f\in \cC\setminus\{0\}$;
    \item for each $i \in \{0,\ldots,r-1\}$, there exists $g \in \cC$ such that $\dim_{\fq} \ker g=i$ (see e.g. {\rm \cite[Lemma 2.1]{LTZ2}}).
\end{itemize}
\end{remark}

\subsection{Moore exponent sets}

Let $I =\{i_0, i_1, \ldots i_{k-1}\}\subseteq \mathbb{Z}_{\ge 0}$. For every $A = (\alpha_0, \alpha_1 \ldots, \alpha_{k-1})\in \mathbb{F}_{q^n}^k$, denote
 \[M_{A, I} = \left( \begin{array}{crcrcrcrccrcrcrcrcrcrcrcrcr}
 \alpha_0^{q^{i_0}}  &   \alpha_0^{q^{i_1}}   & \cdots \cdots  &  \alpha_0^{q^{i_{k-1}}}  \\
 \; \\
 \alpha_1^{q^{i_0}}  &   \alpha_1^{q^{i_1}}   & \cdots \cdots  &  \alpha_1^{q^{i_{k-1}}}\\
      \vdots\quad & \vdots\quad\qquad  &   \ddots  & \vdots\qquad\quad  \\
        \alpha_{k-1}^{q^{i_0}}  &   \alpha_{k-1}^{q^{i_1}}   & \cdots \cdots  &  \alpha_{k-1}^{q^{i_{k-1}}}
 \end{array}
 \right).\]
The set $I$ is a {\em Moore exponent set} for $q$ and $n$ if% for every $A\in\mathbb{F}_{q^n}^k$,
\[\det\,M_{A, I} = 0 \mbox{ if and only if } \alpha_0, \alpha_1 \ldots, \alpha_{k-1}\mbox{ are  } \mathbb{F}_q\mbox{-linearly dependent.}\]
The definition of Moore exponent set was introduced in \cite{DANYU}.

\begin{remark}\label{rem:mooresmall}
If $k>n$, then two elements of $I$ are equal modulo $n$ and hence two columns of $M_{A,I}$ are the same. Thus, a Moore exponent set for $q$ and $n$ has size at most $n$.
\end{remark}

A first  example of Moore exponent set is $I = \{0, 1, \ldots, k-1\}$ for every prime power $q$ and every $n\geq k$; in such a case, $M_{A,I}$ is a square Moore matrix as originally introduced in \cite{Moore}.
For every $s\in\mathbb{N}$, a set $I$ of non-negative integers is a Moore exponent set for $q$ and $n$ if and only if $I+s = \{i+s \pmod{n} : i \in I\}$ is a Moore exponent set for $q$ and $n$.
Thus, we may always assume that the smallest element in $I$ is $0$. Besides $I = \{0, 1, \ldots , k - 1\}$, the following are other known examples of Moore exponent sets.
\begin{itemize}
\item  $I = \{0, d, \ldots , (k - 1)d\}$ for any $q$ and $n$ satisfying $\gcd(d, n) = 1$, that is, $I$ is given by the first elements of an arithmetic progression whose common difference is coprime with $n$; see \cite{kshevetskiy_new_2005}.
\item  $I = \{0, 1, 3\}$ for $n = 7$ and odd $q$; see \cite{CMPZ}.
\item  $I = \{0, 1, 3\}$ for $n = 8$ and $q \equiv 1 \pmod3$; see \cite{CMPZ}.
\item $I =\{0, 2, 3, 4\}$, for $n= 7$ and odd $q$; see \cite{CMPZ}.
\item $I =\{0, 2, 3, 4, 5\}$, for $n=8$ and  $q\equiv 1 \pmod3$; see \cite{CMPZ}.
\end{itemize}
%\noindent The first two cases have been discovered recently in [5]. The third case is equivalent to the so-called Delsarte-Gabidulin code (sometimes also called a Generalized Gabidulin code[9] (numeri bibliografia vedi Daniele-Yue).

%The following result was proved in \cite[Theorem 2.5]{CMPZ}.

\begin{theorem}\label{th:MESescatt} {\rm \cite[Theorem 2.5]{CMPZ}} Let $I=\{i_0,\ldots,i_{k-1}\}$ be a set of $k$  non--negative integers. The following are equivalent:
\begin{itemize}
\item $I$  is a Moore exponent set for $q$ and $n$;

\item $\cC=\langle x^{q^{i_0}},  x^{q^{i_1}}, \ldots,  x^{q^{i_{k-1}}} \rangle_{\fqn}\subseteq \mathcal{L}_{n,q}$ is an MRD-code;

\item $L_{U_{\cC}} = \{\langle (x^{q^{i_0}},  x^{q^{i_1}}, \ldots,  x^{q^{i_{k-1}}}) \rangle_{\fqn}\; : \, x\in {\mathbb{F}_{q^n}^*}\}$ is a scattered $\fq$-linear set with respect to the hyperplanes of $\PG(k-1,q^n)$.
\end{itemize}
\end{theorem}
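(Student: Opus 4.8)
The plan is to prove the three-term equivalence by first fusing the two algebraic statements (Moore exponent set $\Leftrightarrow$ MRD) through a dictionary between the determinant $\det M_{A,I}$ and the kernels of codewords of $\cC$, and then passing to the geometric statement via the already-cited Theorem \ref{cod}. Before anything, I would dispose of degenerate cases: if the $i_j$ are not pairwise distinct modulo $n$ (in particular if $k>n$, cf. Remark \ref{rem:mooresmall}), then two columns of $M_{A,I}$ coincide, so $\det M_{A,I}=0$ identically and $I$ is not a Moore exponent set; simultaneously two of the monomials $x^{q^{i_j}}$ coincide in $\cL_{n,q}$, so $\dim_{\fqn}\cC<k$ and $\cC$ cannot be MRD. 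Thus all three statements fail together, and from here on I assume $k\le n$ with the $i_j$ pairwise distinct modulo $n$, so that $\dim_{\fqn}\cC=k$.

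The heart of the argument is the following dictionary. For $A=(\alpha_0,\ldots,\alpha_{k-1})$, the columns of $M_{A,I}$ are $\fqn$-linearly dependent exactly when there is a nonzero tuple $(b_0,\ldots,b_{k-1})\in\fqn^k$ with $\sum_j b_j\alpha_r^{q^{i_j}}=0$ for every $r$; equivalently, the nonzero $q$-polynomial $f(x)=\sum_j b_j x^{q^{i_j}}\in\cC$ vanishes on all of $\alpha_0,\ldots,\alpha_{k-1}$. Hence $\det M_{A,I}=0$ if and only if some nonzero $f\in\cC$ satisfies $\alpha_0,\ldots,\alpha_{k-1}\in\ker f$. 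On the other hand, if the $\alpha_r$ are $\fq$-linearly dependent, say $\sum_r c_r\alpha_r=0$ with $c_r\in\fq$ not all zero, then for each $j$ the $\fq$-linearity of $y\mapsto y^{q^{i_j}}$ gives $\sum_r c_r\alpha_r^{q^{i_j}}=\bigl(\sum_r c_r\alpha_r\bigr)^{q^{i_j}}=0$, so the rows of $M_{A,I}$ are dependent and $\det M_{A,I}=0$; this last implication holds unconditionally.

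With this dictionary in hand I would prove (Moore) $\Leftrightarrow$ (MRD) using the characterization in Remark \ref{rem:MRDpol}, namely that a code of $\fqn$-dimension $k$ is MRD precisely when $\dim_{\fq}\ker f\le k-1$ for every nonzero $f\in\cC$. For MRD $\Rightarrow$ Moore, one direction of the Moore condition is the unconditional computation above, while conversely $\det M_{A,I}=0$ forces some nonzero $f\in\cC$ to kill all $k$ elements $\alpha_r$, which therefore lie in the $\fq$-space $\ker f$ of dimension at most $k-1$ and are $\fq$-linearly dependent. For Moore $\Rightarrow$ MRD, if some nonzero $f=\sum_j b_j x^{q^{i_j}}\in\cC$ had $\dim_{\fq}\ker f\ge k$, I would pick $\fq$-independent $\alpha_0,\ldots,\alpha_{k-1}\in\ker f$; these make the columns of $M_{A,I}$ dependent, so $\det M_{A,I}=0$, which by the Moore hypothesis would make the $\alpha_r$ $\fq$-dependent, a contradiction. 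Hence every nonzero $f\in\cC$ has $\dim_{\fq}\ker f\le k-1$ and $\cC$ is MRD.

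Finally, for (MRD) $\Leftrightarrow$ (scattered) I would appeal directly to Theorem \ref{cod}. The code $\cC=\langle x^{q^{i_0}},\ldots,x^{q^{i_{k-1}}}\rangle_{\fqn}$ is $\fqn$-linear, and every scalar map $x\mapsto\lambda x$ lies in its left idealiser, so $L(\cC)$ contains a copy of $\fqn$; moreover $U_{\cC}=\{(x^{q^{i_0}},\ldots,x^{q^{i_{k-1}}}):x\in\fqn\}$ has $\fq$-dimension $n$, since $x\mapsto(x^{q^{i_0}},\ldots,x^{q^{i_{k-1}}})$ is an injective $\fq$-linear map. Thus the setting of Theorem \ref{cod} applies with $r=k$: an $\fqn$-dimension-$k$ MRD-code automatically has minimum distance $n-k+1$, and such a code $\cC$ is MRD if and only if $U_{\cC}$ defines a maximum $(k-1)$-scattered $\fq$-linear set of $\PG(k-1,q^n)$, that is, a scattered linear set with respect to the hyperplanes. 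The main obstacle is the middle equivalence: the only genuinely new ingredient is the translation between the vanishing of $\det M_{A,I}$ and the kernels of codewords, after which both directions collapse onto the rank bound of Remark \ref{rem:MRDpol}, while the outer equivalence is essentially a repackaging of Theorem \ref{cod}.
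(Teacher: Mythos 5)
The paper itself gives no proof of this statement: it is imported verbatim from \cite[Theorem 2.5]{CMPZ}, so there is no internal argument to compare yours against. Your route---the determinant/kernel dictionary for (Moore) $\Leftrightarrow$ (MRD), and Theorem \ref{cod} for (MRD) $\Leftrightarrow$ (scattered)---is the natural one given the toolkit the paper quotes, and its core is correct: $\det M_{A,I}=0$ precisely when some nonzero $f\in\cC$ vanishes at all of $\alpha_0,\ldots,\alpha_{k-1}$, while $\fq$-dependence of the $\alpha_r$ always forces $\det M_{A,I}=0$, so both implications collapse onto the kernel bound $\dim_{\fq}\ker f\leq k-1$. (For (2) $\Leftrightarrow$ (3) you could also bypass Theorem \ref{cod}: the weight of the hyperplane $\sum_j b_jy_j=0$ in $L_{U_{\cC}}$ equals $\dim_{\fq}\ker\bigl(\sum_j b_jx^{q^{i_j}}\bigr)$, which makes the equivalence with the kernel bound immediate and avoids the ``up to equivalence'' phrasing of Theorem \ref{cod} altogether.)

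Two repairs are needed. First, your disposal of the degenerate case rests on a false claim: $\dim_{\fqn}\cC<k$ does \emph{not} prevent $\cC$ from being MRD. Take $I=\{0,n\}$ with $n\geq 2$: then $\cC=\langle x,x^{q^n}\rangle_{\fqn}=\langle x\rangle_{\fqn}$ attains the Singleton bound (it is the Gabidulin code $\mathcal{G}_{1,s}$ with $k=1$), yet $I$ is not a Moore exponent set (here $\det M_{A,I}\equiv 0$ although $\fq$-independent pairs exist) and $L_{U_{\cC}}$ is a single point of $\PG(1,q^n)$, hence not scattered with respect to the hyperplanes. So in this case statement (2) holds while (1) and (3) fail: the three statements do not ``fail together'', and the theorem is literally false for such $I$. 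The correct fix is not to argue the degenerate case but to record the standing hypothesis under which the theorem is meant (and under which it is proved in \cite{CMPZ}): the $i_j$ are pairwise distinct modulo $n$, equivalently $I$ may be taken inside $\{0,\ldots,n-1\}$; this is also what legitimizes your identification of a nonzero coefficient tuple with a nonzero codeword. Second, Remark \ref{rem:MRDpol} as stated in the paper is only the implication ``MRD $\Rightarrow$ kernel bound'', whereas your direction (Moore) $\Rightarrow$ (MRD) uses the converse; you call the Remark a ``characterization'', which it is not as written. The converse is true but should be supplied: if every nonzero $f\in\cC$ has $\dim_{\fq}\ker f\leq k-1$, then the minimum distance satisfies $d\geq n-k+1$; since $\dim_{\fqn}\cC=k$ gives $|\cC|=q^{nk}$, the Singleton-like bound $|\cC|\leq q^{n(n-d+1)}$ forces $d=n-k+1$ with equality, i.e.\ $\cC$ is MRD. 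With these two adjustments your proof is complete.
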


%The following results are easy consequences of Theorem \ref{th:MESescatt}.

%\begin{proposition}\label{corgen} If $\{0, i_1, i_2\}$ is a Moore set, then $\{0, -i_1, -i_2\}$ is a Moore set.
%\end{proposition}

%\proof ${\mathcal C}=\langle x, x^{q^{i_1}}, x^{q^{i_2}} \rangle$ is an MRD code. Thus, ${\mathcal C}^T=\langle x, x^{q^{n-i_1}}, x^{q^{n-i_2}} \rangle$  is an MRD and so  $\{0, -i_1, -i_2\}$ is a Moore set. \endproof

%\begin{proposition} \label{2ast} If $I =\{0, i_1, \ldots, i_{k-1}\}$ is a Moore exponent set, then $\{0, n-i_1, \ldots, n-i_{k-1}\}$ is a Moore exponent set.
%\end{proposition}
%\proof {\bf COme in  2.3, uso ${\mathcal C}^T$, mettiamo solo questa?}
%By (ii) of Theorem \ref{th:MESescatt} the subspace ${\mathcal C} =\langle x, x^{q^{i_1}}, \ldots, x^{q^{i_{k-1}}}\rangle_{\mathbb{F}_{q^n}}$ is an MRD code. If $f(x) = \displaystyle\sum_{i=0}^{n-1}a_i x^{q^i}$ let $\hat{f}(x)= \displaystyle\sum_{i=0}^{n-1}a_i^{n-i} x^{q^{n-i}}$. The set $\hat{\mathcal C} =\{\hat{f}\, : \, f\in {\mathcal C}\}= \langle x, x^{q^{n-i_1}}, \ldots, x^{q^{n-i_{k-1}}}\rangle_{\mathbb{F}_{q^n}}$ is an MRD code, and so by Theorem \ref{th:MESescatt} the assertion follows. \endproof

Some easy consequences follow from Theorem \ref{th:MESescatt}.
Let $h\geq2$, $I=\{0,i_1,\ldots,i_h\}\subseteq\{0,\ldots,n-1\}$; for $j=1,\ldots,h$, let $\sigma_j:\fqn\to\fqn$ be defined by $\sigma_j(x)= x^{q^{i_j}}$.

\begin{proposition}\label{lemma:fieldintersection}
If $I$ is a Moore exponent set for $q$ and $n$, then $\cap_{j=1}^h \mathrm{Fix}(\sigma_j)=\fq$.
\end{proposition}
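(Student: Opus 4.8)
The plan is to translate the field-theoretic claim into a divisibility condition and then rule out the bad case using the rank-metric characterization of Moore exponent sets. First I would observe that $\sigma_j$ is the $i_j$-th power of the Frobenius automorphism $x\mapsto x^q$ of $\fqn$, so that $\mathrm{Fix}(\sigma_j)=\mathbb{F}_{q^{\gcd(i_j,n)}}$. Intersecting over $j=1,\ldots,h$ (the index $j=0$ would give $\sigma_0=\mathrm{id}$ and contribute all of $\fqn$, hence may be ignored) yields
\[
\bigcap_{j=1}^h\mathrm{Fix}(\sigma_j)=\mathbb{F}_{q^{d}},\qquad d:=\gcd(i_1,\ldots,i_h,n).
\]
Thus the assertion $\bigcap_{j=1}^h\mathrm{Fix}(\sigma_j)=\fq$ is equivalent to $d=1$, and it suffices to prove that a Moore exponent set $I=\{0,i_1,\ldots,i_h\}$ forces $d=1$.

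Next I would argue by contradiction, assuming $d>1$. By Theorem \ref{th:MESescatt}, the hypothesis that $I$ is a Moore exponent set is equivalent to $\cC=\langle x^{q^{i_0}},\ldots,x^{q^{i_h}}\rangle_{\fqn}\subseteq\cL_{n,q}$ being an MRD-code of $\fqn$-dimension $r=h+1$. Since $d$ divides each exponent $i_0=0,i_1,\ldots,i_h$, every generator $x^{q^{i_j}}$ is a $q^d$-monomial, and therefore every $f\in\cC$ commutes with scalar multiplication by $\mathbb{F}_{q^d}$, i.e. is an $\mathbb{F}_{q^d}$-linear endomorphism of $\fqn$. Viewing $\fqn$ as an $\mathbb{F}_{q^d}$-vector space of dimension $n/d$, the kernel of any $f\in\cC$ is an $\mathbb{F}_{q^d}$-subspace, so $\dim_{\fq}\ker f$ is a multiple of $d$.

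This clashes with the MRD structure of $\cC$. Indeed, by the second item of Remark \ref{rem:MRDpol}, applied with $r=h+1$ and $i=1$ (which lies in $\{0,\ldots,r-1\}$ since $h\geq 1$), there exists $g\in\cC$ with $\dim_{\fq}\ker g=1$; but $1$ is not a multiple of $d>1$. This contradiction shows $d=1$, whence $\bigcap_{j=1}^h\mathrm{Fix}(\sigma_j)=\fq$, as claimed.

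The substantive step, and the one I expect to be the crux, is the reduction to kernel dimensions: recognising that divisibility of all exponents by $d$ upgrades every codeword to an $\mathbb{F}_{q^d}$-linear map and hence constrains every kernel dimension to a multiple of $d$, which then contradicts the existence --- guaranteed by the MRD property --- of a codeword whose kernel has dimension exactly one. Everything else is routine Galois theory; I note in passing that $h\geq 2$ is more than enough for this argument, which in fact only requires $h\geq 1$.
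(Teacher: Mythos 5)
Your proof is correct and is essentially the paper's own argument: both identify the intersection of fixed fields with $\mathbb{F}_{q^\ell}$ for an $\ell$ dividing all exponents and $n$, observe that every element of the associated MRD-code $\cC$ is then a $q^\ell$-polynomial so all kernel dimensions over $\fq$ are multiples of $\ell$, and invoke Remark \ref{rem:MRDpol} to produce a codeword with kernel of $\fq$-dimension exactly one, forcing $\ell=1$. Your explicit identification $\ell=\gcd(i_1,\ldots,i_h,n)$ and the remark that $h\geq 1$ suffices are harmless refinements of the same proof.
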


\begin{proof}
Let $\mathbb{F}_{q^\ell}=\cap_{j=1}^h \mathrm{Fix}(\sigma_j)$, so that $\ell$ divides $i_j$ for any $j$ and $\ell$ divides $n$.
Let $\mathcal{C}=\langle x,x^{q^{i_1}},\ldots,x^{q^{i_h}}\rangle_{\fqn}$ be the MRD-code associated with $I$. Then every $g\in\mathcal{C}$ is a $q^\ell$-polynomialm and hence $\dim{\fq}\ker g$ is a multiple of $\ell$.
As $\mathcal{C}$ is an MRD-code, by Remark \ref{rem:MRDpol}, there exists $g\in\mathcal{C}$ with $\dim_{\fq}\ker g=1$. %and hence $1$ is a multiple of $\ell$. 
This implies $\ell=1$, that is the claim.
\end{proof}

If $h=2$, Proposition \ref{lemma:fieldintersection} can be specialized as follows.

\begin{proposition}\label{per2-scattered} Let $I=\{0, i_1, i_2\}$ be a Moore exponent set for $q$ and $n$. Then, either  ${\rm Fix}(\sigma_1) =\mathbb{F}_q$ or ${\rm Fix}(\sigma_2) =\mathbb{F}_q$.
\end{proposition}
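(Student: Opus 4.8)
The plan is to combine the kernel bound for MRD-codes with Proposition \ref{lemma:fieldintersection}. Write $\sigma_j(x)=x^{q^{i_j}}$ and set $d_j=\gcd(i_j,n)$, so that $\mathrm{Fix}(\sigma_j)=\mathbb{F}_{q^{d_j}}$ is a subfield of $\fqn$ of $\fq$-dimension $d_j$. Since $I$ is a Moore exponent set for $q$ and $n$, Theorem \ref{th:MESescatt} tells us that $\mathcal{C}=\langle x, x^{q^{i_1}}, x^{q^{i_2}}\rangle_{\fqn}\subseteq\mathcal{L}_{n,q}$ is an MRD-code; as it is generated by $r=3$ linearly independent $q$-monomials (the three residues of $I$ modulo $n$ are distinct by Remark \ref{rem:mooresmall}), the first bullet of Remark \ref{rem:MRDpol} yields $\dim_{\fq}\ker g\leq 2$ for every nonzero $g\in\mathcal{C}$.

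First I would exhibit, for each $j$, an element of $\mathcal{C}$ whose kernel is exactly $\mathrm{Fix}(\sigma_j)$. The $q$-polynomial $g_j(x)=x^{q^{i_j}}-x$ lies in $\mathcal{C}$ (it is the combination of the generators with coefficients $(-1,1,0)$, respectively $(-1,0,1)$), and it is nonzero because $i_j\not\equiv 0\pmod n$. Its kernel is $\{x\in\fqn : x^{q^{i_j}}=x\}=\mathrm{Fix}(\sigma_j)=\mathbb{F}_{q^{d_j}}$, of $\fq$-dimension $d_j$. The MRD kernel bound then forces $d_j\leq 2$ for $j=1,2$.

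Finally I would rule out the case $d_1=d_2=2$. Applying Proposition \ref{lemma:fieldintersection} with $h=2$ gives $\mathrm{Fix}(\sigma_1)\cap\mathrm{Fix}(\sigma_2)=\fq$, that is $\mathbb{F}_{q^{d_1}}\cap\mathbb{F}_{q^{d_2}}=\mathbb{F}_{q^{\gcd(d_1,d_2)}}=\fq$, whence $\gcd(d_1,d_2)=1$. If both $d_1>1$ and $d_2>1$ held, then by the previous step $d_1=d_2=2$, so $\gcd(d_1,d_2)=2$, a contradiction. Hence $d_1=1$ or $d_2=1$, i.e. $\mathrm{Fix}(\sigma_1)=\fq$ or $\mathrm{Fix}(\sigma_2)=\fq$, as claimed.

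The only delicate point I anticipate is the identification of the right witness polynomial $g_j=x^{q^{i_j}}-x$ together with the observation that its kernel is the full subfield $\mathrm{Fix}(\sigma_j)$ of $\fq$-dimension $\gcd(i_j,n)$; everything else is a direct application of the quoted MRD kernel bound and Proposition \ref{lemma:fieldintersection}. It is worth stressing that the conclusion $\gcd(d_1,d_2)=1$ obtained from Proposition \ref{lemma:fieldintersection} alone does \emph{not} suffice (for instance $d_1=2$, $d_2=3$ would be compatible with it), so the sharper bound $d_j\leq 2$ coming from the MRD structure is essential to the argument.
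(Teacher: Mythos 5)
Your proof is correct and takes essentially the same approach as the paper: both apply the MRD kernel bound of Remark \ref{rem:MRDpol} to the witnesses $x^{q^{i_j}}-x\in\mathcal{C}$ to force $\dim_{\mathbb{F}_q}\mathrm{Fix}(\sigma_j)\leq 2$, and then exclude the case in which both fixed fields equal $\mathbb{F}_{q^2}$. The only difference is organizational: you delegate that exclusion to the already-proven Proposition \ref{lemma:fieldintersection}, whereas the paper re-derives the same contradiction inline (if both $i_1,i_2$ were even, every element of $\mathcal{C}$ would be a $q^2$-polynomial, contradicting the existence, guaranteed by Remark \ref{rem:MRDpol}, of an element whose kernel has $\mathbb{F}_q$-dimension $1$).
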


\begin{proof} Assume to the contrary  that $\mathrm{Fix}(\sigma_1) =\mathbb{F}_{q^\ell}$ and $\mathrm{Fix}(\sigma_2) =\mathbb{F}_{q^m}$, with $\ell, m > 1$. Being  $\{0, i_1, i_2\}$  a Moore exponent set, it follows that ${\mathcal C} = \langle x, x^{q^{i_1}},  x^{q^{i_2}}\rangle_{\mathbb{F}_{q^n}}$ is an MRD-code. %Hence, by Remark \ref{rem:MRDpol} it follows
%\begin{enumerate}
%    \item $\dim_{\fq} \ker g \le 2$, for every $g\in {\mathcal C} \setminus\{0\}$;
%    \item for any $i\in\{0, 1, 2\}$ there exists $g\in \mathcal C$ such that $\dim_\fq \ker g =i$.
%\end{enumerate}
Since $\mathrm{Fix}(\sigma_{j}) = \ker(x^{q^{i_j}}-x)$ for $j\in\{1, 2\}$, one has that $\dim_{\mathbb{F}_q}\ker(x^{q^{i_j}}-x)\le 2$ by Remark \ref{rem:MRDpol}; hence $\ell=m=2$ and $n$ is even. It follows that $i_1$ and $i_2$ are even. Therefore, the elements in $\mathcal C$ are $q^2$--polynomials. Thus, we have a contradiction to Remark \ref{rem:MRDpol}. %since the kernel of a $q^2$--polynomial is an $\mathbb{F}_{q^2}$--subspace of $\mathbb{F}_{q^n}$. 
\end{proof}

\begin{corollary}
The Gabidulin index of an MRD-code $\cC=\langle x,x^{q^{i_1}},x^{q^{i_2}}\rangle_{\fqn}$ is greater than or equal to $2$. 
In particular, if $\cC$ is not equivalent to any generalized Gabidulin code, its Gabidulin index is two. 
\end{corollary}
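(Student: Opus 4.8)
The plan is to establish the lower bound on the Gabidulin index by exhibiting an explicit two-dimensional subcode of $\cC=\langle x,x^{q^{i_1}},x^{q^{i_2}}\rangle_{\fqn}$ that is equivalent to a generalized Gabidulin code, and then to use the preceding Proposition \ref{per2-scattered} to handle the case distinction that forces the index to be exactly two when $\cC$ itself is not a generalized Gabidulin code.

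\textbf{Lower bound.} First I would show that \emph{every} MRD-code of the form $\cC=\langle x,x^{q^{i_1}},x^{q^{i_2}}\rangle_{\fqn}$ contains a monomial-generated subcode of dimension two that is a generalized Gabidulin code. Recall that $\cC$ being MRD means, by Theorem \ref{th:MESescatt}, that $I=\{0,i_1,i_2\}$ is a Moore exponent set for $q$ and $n$. A natural candidate subcode is $\cG=\langle x, x^{q^{i_1}}\rangle_{\fqn}$. This is a monomial code $\langle x^{q^{j_0}},x^{q^{j_1}}\rangle_{\fqn}$ with exponent set $\{0,i_1\}$, and by the classification recalled before the statement (the Kshevetskiy--Gabidulin family $\mathcal{G}_{k,s}=\langle x,x^{q^s},\ldots,x^{q^{s(k-1)}}\rangle_{\fqn}$ with $\gcd(s,n)=1$), a two-generator monomial code $\langle x,x^{q^{s}}\rangle_{\fqn}$ is a generalized Gabidulin code precisely when $\gcd(i_1,n)=1$. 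So the content of the lower bound is to argue that at least one of the differences $i_1-i_0=i_1$, $i_2-i_1$, $i_2-i_0=i_2$ (taken modulo $n$) is coprime to $n$, which by Theorem \ref{th:equivmon} would produce a two-dimensional generalized Gabidulin subcode from one of the three pairs of generators. This is exactly the kind of number-theoretic constraint that Proposition \ref{per2-scattered} encodes, so I would invoke it directly.

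\textbf{Using Proposition \ref{per2-scattered}.} Since $I=\{0,i_1,i_2\}$ is a Moore exponent set, Proposition \ref{per2-scattered} gives that either $\mathrm{Fix}(\sigma_1)=\fq$ or $\mathrm{Fix}(\sigma_2)=\fq$, i.e. either $\gcd(i_1,n)=1$ or $\gcd(i_2,n)=1$ (using that $\mathrm{Fix}(x\mapsto x^{q^{i_j}})=\fq$ is equivalent to $\gcd(i_j,n)=1$). In the first case the subcode $\langle x,x^{q^{i_1}}\rangle_{\fqn}$ has exponent set $\{0,i_1\}$ with $\gcd(i_1,n)=1$, hence by Theorem \ref{th:equivmon} it is equivalent to $\mathcal{G}_{2,i_1}$, a generalized Gabidulin code. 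In the second case the subcode $\langle x,x^{q^{i_2}}\rangle_{\fqn}$ plays the same role via $\gcd(i_2,n)=1$. Either way $\cC$ contains a two-dimensional generalized Gabidulin subcode, so its Gabidulin index (the maximum dimension of such a subcode) is at least $2$.

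\textbf{The ``in particular'' clause and the main obstacle.} For the second assertion, suppose $\cC$ is not itself equivalent to any generalized Gabidulin code. The Gabidulin index is at most $\dim_{\fqn}\cC=3$; if it were $3$, then $\cC$ would contain a three-dimensional generalized Gabidulin subcode, which, being of full dimension, equals $\cC$, contradicting the hypothesis. Hence the index is at most $2$, and combined with the lower bound it equals $2$. \emph{The main obstacle} I anticipate is the careful translation between the field-theoretic statement $\mathrm{Fix}(\sigma_j)=\fq$ and the coprimality condition $\gcd(i_j,n)=1$, together with checking that a monomial two-generator code with coprime exponent difference genuinely falls in the Kshevetskiy--Gabidulin family in the precise sense required by the definition of Gabidulin index; both are routine but must be stated cleanly, and the subtlety is only in making sure the dimension-three case is correctly excluded using that a maximal-dimension subcode coincides with the whole code.
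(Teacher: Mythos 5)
Your proposal is correct and follows essentially the same route as the paper: invoke Proposition \ref{per2-scattered} to get $\gcd(i_1,n)=1$ or $\gcd(i_2,n)=1$, whence $\langle x,x^{q^{i_1}}\rangle_{\fqn}=\mathcal{G}_{2,i_1}$ or $\langle x,x^{q^{i_2}}\rangle_{\fqn}=\mathcal{G}_{2,i_2}$ gives the lower bound, and the ``in particular'' clause follows since the only three-dimensional subcode of $\cC$ is $\cC$ itself. The only cosmetic difference is that you route the identification of the two-generator subcode through Theorem \ref{th:equivmon}, which is unnecessary (with the coprimality in hand it is literally equal to $\mathcal{G}_{2,i_j}$ by definition), and you spell out the details the paper compresses into ``The claim follows.''
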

\begin{proof}
By Proposition \ref{per2-scattered}, we have either $\gcd(i_1,n)=1$ or $\gcd(i_2,n)=1$, i.e. either $\langle x,x^{q^{i_1}}\rangle_{\fqn}=\mathcal{G}_{2,i_1}$ or $\langle x,x^{q^{i_2}}\rangle_{\fqn}=\mathcal{G}_{2,i_2}$. The claim follows.
\end{proof}

It was shown in \cite{DANYU} that, if $I$ is a Moore exponent set for $q$ and $n$ and $n$ is big enough with respect to the elements of $I$, then $I$ is given by the first elements of an arithmetic progression.

\begin{theorem}{\rm \cite[Theorems 1.1, 3.2 and 4.1]{DANYU}}\label{th:DanyU}
Let $I$ be a Moore exponent set for $q$ and $n$, with $|I|>2$.
Let $j$ be the largest element of $I$ and define $N$ to be either $4j+2$ or $\frac{13}{3}j+2$ according to $|I|=3$ or $|I|>3$, respectively.
If $q>5$ and $n>N$, then $I$ is given by the first elements of an arithmetic progression.
\end{theorem}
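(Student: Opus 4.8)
The plan is to translate the Moore exponent set condition into a statement about $\fqn$-rational points of an algebraic variety over $\overline{\fq}$ and then to apply Hasse--Weil/Lang--Weil point-counting estimates, which become decisive exactly once $n$ exceeds a bound governed by the largest exponent $j$. First I would record the one free implication: if $\alpha_0,\dots,\alpha_{k-1}$ are $\fq$-linearly dependent, then applying the Frobenius $x\mapsto x^{q^{i_l}}$ to a dependency relation makes the columns of $M_{A,I}$ dependent, so $\det M_{A,I}=0$. Thus the homogeneous polynomial $D(\alpha_0,\dots,\alpha_{k-1})=\det M_{A,I}$, of degree $\sum_l q^{i_l}$, always vanishes on the \emph{trivial locus} $\mathcal T$, the union of the $\frac{q^k-1}{q-1}$ hyperplanes $\sum_l c_l\alpha_l=0$ with $(c_l)\in\PG(k-1,q)$. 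Being a Moore exponent set for $q$ and $n$ is precisely the assertion that the only $\fqn$-rational zeros of $D$ lie on $\mathcal T$.

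Next I would pass from the hypersurface $V(D)$ to a curve (or a low-dimensional variety) on which the obstruction is visible, by slicing with a generic $\fq$-rational linear subspace and dehomogenising so as to quotient out the trivial factors coming from $\mathcal T$; this produces a variety $\cX_I$ over $\overline{\fq}$ whose $\fqn$-points off a controlled subset correspond to $\fq$-independent tuples with $D=0$. If $\cX_I$ had an absolutely irreducible component defined over $\overline{\fq}$ and not contained in the bad subset, then Hasse--Weil would force it to carry on the order of $q^{n}$ rational points over $\fqn$, far more than $\mathcal T$ can absorb, as soon as $n$ is large compared with the genus of that component. Since $\deg D=\sum_l q^{i_l}$ is of size $q^{j}$, a plane section has genus $g$ of order $q^{2j}$, and the estimate $\#\cX_I(\fqn)\ge q^n+1-2g\,q^{n/2}$ is positive precisely once $q^{n/2}$ exceeds $2g\sim q^{2j}$, i.e. once $n>4j$; refining this bookkeeping in the two cases $|I|=3$ and $|I|>3$ yields the thresholds $N=4j+2$ and $N=\tfrac{13}{3}j+2$, while the hypothesis $q>5$ is the usual smallness exclusion ensuring the estimate is non-vacuous.

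The heart of the argument, and the step I expect to be the main obstacle, is the converse geometric input: showing that whenever $I$ is \emph{not} the set of first terms of an arithmetic progression, the variety $\cX_I$ necessarily possesses an absolutely irreducible $\overline{\fq}$-component lying outside the trivial locus. Together with the point count above, such a component yields $\fq$-independent tuples with vanishing determinant for every large $n$, contradicting the Moore property and thereby forcing $I$ to be an arithmetic progression. Establishing this absolute irreducibility cannot be read off from the degree alone: it requires analysing the factorisation of $D$ through the specific $q$-power structure of its entries, typically by inspecting the component at infinity and the singular locus, and by tracing how the coprimality condition $\gcd(d,n)=1$ that characterises genuine arithmetic progressions fails for a non-progression $I$. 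I would isolate this as a sequence of lemmas, treating $|I|=3$ first, where $\cX_I$ is a plane curve and the intersection-theoretic analysis is cleanest and gives the sharper bound $4j+2$, and then the general case, where the higher-dimensional reduction forces the weaker bound $\tfrac{13}{3}j+2$.
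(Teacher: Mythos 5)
This theorem is not proved in the paper at all: it is imported verbatim from Bartoli--Zhou \cite{DANYU} (their Theorems 1.1, 3.2 and 4.1), so there is no internal proof to compare your attempt against. That said, your outline does reproduce the strategy of the cited work: translate the Moore condition into the statement that the determinantal hypersurface $D=\det M_{A,I}=0$, of degree $\sum_l q^{i_l}$, has no $\fqn$-rational points off the union $\mathcal T$ of the $\frac{q^k-1}{q-1}$ hyperplanes of $\fq$-dependent tuples; reduce to a curve (for $|I|=3$) or a low-dimensional variety (for $|I|>3$); and play absolute irreducibility against Hasse--Weil/Lang--Weil counts, with the thresholds $4j+2$ and $\frac{13}{3}j+2$ emerging from the genus bookkeeping for plane curves and from the exponent $13/3$ in explicit Lang--Weil estimates of Cafure--Matera type, respectively, and with $q>5$ making those estimates effective.

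However, as a proof your proposal has a genuine gap, and you name it yourself: everything hinges on showing that whenever $I$ is \emph{not} an initial segment of an arithmetic progression, the associated variety has a suitable absolutely irreducible component not contained in the trivial locus. This is not a routine lemma to be filled in later; it is the entire technical content of \cite{DANYU} (a delicate analysis of singular points, components at infinity and intersection multiplicities of the curves and surfaces attached to $I$, carried out separately for $|I|=3$ and $|I|>3$), and nothing in your sketch indicates how to carry it out. There is also a technical slip that matters for the counting step: asking for an absolutely irreducible component ``defined over $\overline{\fq}$'' is vacuous, since every component is, and such a component is useless for point counting; what is needed is a component that is absolutely irreducible \emph{and} defined over $\fqn$ (equivalently, fixed by the relevant Frobenius), because Galois-conjugate components meet only in a lower-dimensional set and carry too few rational points. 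Producing a component with both properties simultaneously, for every non-progression $I$, is precisely where the difficulty of the theorem lies.
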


\section{Linear sets of $h$-pseudoregulus type}\label{sec:hpseudo}

The following definition generalizes the concept of linear set of pseudoregulus type.

\begin{definition}\label{def:hpseudo}
Let $h\geq1$, $t,n\geq2$.
An $\fq$-linear set $L$ of rank $nt$ in $\Lambda={\rm PG}((h+1)t-1,q^n)$ such that $\langle L \rangle=\Lambda$ is of $h$-pseudoregulus type if:
\begin{itemize}
\item[(a)] there exist $s=\frac{q^{nt}-1}{q^n-1}$ pairwise disjoint $h$-subspaces $\pi_1,\ldots,\pi_s$ of $\Lambda$ such that $w_L(\pi_j)=n$ for any $j\in\{1,\ldots,s\}$;
\item[(b)] there exist exactly $h+1$ distinct $(t-1)$-subspaces $T_1,\ldots,T_{h+1}$ of $\Lambda$ such that $\Lambda=\langle T_1,\ldots,T_{h+1}\rangle$, $T_i\cap \pi_j\ne\emptyset$ for any $i$ and $j$, and $L\cap K_{i_0}=\emptyset$ for any $i_0\in\{1,\ldots,h+1\}$, where $K_{i_0}=\langle \cup_{i\ne i_0} T_{i}\rangle$.
\end{itemize}
The set $\mathcal{P}=\{\pi_1,\ldots,\pi_s\}$ is called the $h$-pseudoregulus associated with $L$, and $T_1,\ldots,T_{h+1}$ are the transversal spaces of $\mathcal{P}$.
\end{definition}
%Note that from the definition follows that $|T_i\cap\pi_j|=1$ for any $i$ and $j$.

Note that from $\langle L \rangle=\Lambda$ follows $n\geq h+1$. 

We start by showing examples of linear sets of $h$-pseudoregulus type; we will then prove that every linear set of $h$-pseudoregulus type is equivalent to one of such examples.

\begin{theorem}\label{th:fromalgebratogeometry}
Let $\Lambda={\rm PG}(\mathbb{F}_{q^{nt}}^{h+1},\mathbb{F}_{q^n})=\PG((h+1)t-1,q^n)$, where $h,t,n$ are positive integers with $t,n\geq2$ and $n\geq h+1$.
For $j=2,\ldots,h+1$, let $f_j:\mathbb{F}_{q^{nt}}\to \mathbb{F}_{q^{nt}}$ be an invertible strictly $\fqn$-semilinear map with companion automorphism $\sigma_j\in\mathrm{Gal}(\mathbb{F}_{q^n}\colon\mathbb{F}_q)$ such that $\sigma_2,\ldots,\sigma_{h+1}$ are pairwise distinct.
Denote by $\underline{f}$ the $(h+1)$-tuple $(f_1=\mathrm{id},f_2,\ldots,f_{h+1})$.
Let
\begin{equation}\label{eq:pseudo}
U_{\underline{f}}=\{(x,f_2(x),\ldots,f_{h+1}(x))\mid x\in\mathbb{F}_{q^{nt}}\},
\end{equation}
\begin{equation}\label{eq:linearpseudo}
L_{\underline{f}}:=L_{U_{\underline{f}}}=\{\langle(x,f_2(x),\ldots,f_{h+1}(x))\rangle_{\fqn}\mid x\in\mathbb{F}_{q^{nt}}^*\} \subset \Lambda.
\end{equation}
Then $L_\ff$ is an $\fq$-linear set of $\Lambda$ of $h$-pseudoregulus type.
Moreover, for any $x\in\fqnt^*$, define
\[W_x=\langle(x,0,\ldots,0),(0,f_2(x),\ldots,0),\ldots,(0,\ldots,0,f_{h+1}(x))\rangle_{\fqn},\]
\[ \Pi_x={\rm PG}(W_x,\fqn)={\rm PG}(h,q^n).\]
For $i=1,\ldots,h+1$, let
\begin{equation}\label{eq:transversal}
T_i=\PG(\langle\mathbf{e}_i\rangle_{\fqnt},\fqn)=\PG(t-1,q^n)\subset\Lambda,
\end{equation}
where $\mathbf{e}_i$ is the unit $i$-th vector of the standard ordered basis of $\fqnt^{h+1}$.
Then $\mathcal{P}=\{\Pi_x\mid x\in\fqnt^*\}$ is the $h$-pseudoregulus of $L_\ff$ and $T_1,\ldots,T_{h+1}$ are the transversal spaces of $\mathcal{P}$.
\end{theorem}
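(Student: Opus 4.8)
The plan is to verify directly the two defining conditions of Definition \ref{def:hpseudo}, together with the identification of the $\Pi_x$ and the $T_i$. First I would record the easy structural facts. Each $f_j$ is $\fqn$-semilinear with companion in $\mathrm{Gal}(\fqn:\fq)$, hence in particular $\fq$-linear (every automorphism in $\mathrm{Gal}(\fqn:\fq)$ fixes $\fq$ pointwise), so $U_{\underline f}$ is an $\fq$-subspace of $\fqnt^{h+1}$. Since the projection onto the first coordinate is injective on $U_{\underline f}$, we get $\dim_{\fq} U_{\underline f}=\dim_{\fq}\fqnt=nt$, so $L_{\underline f}$ has rank $nt$. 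Moreover each generator $(x,f_2(x),\ldots,f_{h+1}(x))$ lies in $W_x$, so every point of $L_{\underline f}$ lies on some $\Pi_x$; and from $f_i(\lambda x)=\sigma_i(\lambda)f_i(x)$ for $\lambda\in\fqn$ one checks $W_{\lambda x}=W_x$, so $\Pi_x$ depends only on the $\fqn$-class of $x$.

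Next I would establish condition (a). For disjointness, if $\langle w\rangle\in\Pi_x\cap\Pi_{x'}$ with $w=\sum_i\alpha_i f_i(x)\mathbf{e}_i=\sum_i\beta_i f_i(x')\mathbf{e}_i$ ($\alpha_i,\beta_i\in\fqn$, $f_1=\mathrm{id}$), then a nonzero coordinate block gives $f_i(x)=\mu f_i(x')=f_i(\sigma_i^{-1}(\mu)x')$ with $\mu=\beta_i\alpha_i^{-1}\in\fqn^*$; injectivity of $f_i$ yields $x\in\fqn^* x'$, so $\Pi_x=\Pi_{x'}$. Thus the distinct $\Pi_x$ are pairwise disjoint and indexed by $\fqnt^*/\fqn^*$, giving exactly $s=\frac{q^{nt}-1}{q^n-1}$ of them. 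For the weight, matching first coordinates shows $U_{\underline f}\cap W_x=\{(y,f_2(y),\ldots,f_{h+1}(y)):y\in\fqn x\}$, which has $\fq$-dimension $n$, so $w_L(\Pi_x)=n$. Finally $\langle L\rangle=\Lambda$: in the basis $(f_i(x)\mathbf{e}_i)_i$ of $W_x$ the vectors arising from $y=\gamma x$ have coordinates $(\sigma_1(\gamma),\ldots,\sigma_{h+1}(\gamma))$, and since strictness and the hypothesis make $\sigma_1,\ldots,\sigma_{h+1}$ pairwise distinct, they are linearly independent over $\fqn$ (Artin's lemma on automorphisms), so these vectors span $\fqn^{h+1}$; hence $L\cap\Pi_x$ spans $\Pi_x$, and as $\bigcup_x(T_i\cap\Pi_x)$ then sweeps out all of $\langle\mathbf{e}_i\rangle_{\fqnt}$ we obtain $\langle L\rangle=\Lambda$. (This is the step using $h+1\le n$.)

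For the straightforward part of condition (b), the spaces $T_i=\PG(\langle\mathbf{e}_i\rangle_{\fqnt},\fqn)$ are distinct $(t-1)$-subspaces with $\langle T_1,\ldots,T_{h+1}\rangle=\Lambda$; each meets $\Pi_x$ since $\langle\mathbf{e}_i\rangle_{\fqnt}\cap W_x=\langle f_i(x)\mathbf{e}_i\rangle_{\fqn}$ is a single point; and $L\cap K_{i_0}=\emptyset$ because a point of $L$ has $i_0$-th coordinate $f_{i_0}(x)\ne0$ for $x\ne0$ by invertibility, whereas $K_{i_0}=\langle\cup_{i\ne i_0}T_i\rangle$ is exactly the locus of vanishing $i_0$-th coordinate.

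The delicate point, which I expect to be the main obstacle, is the word \emph{exactly} in (b): one must show $T_1,\ldots,T_{h+1}$ are the only $(t-1)$-subspaces meeting every $\Pi_x$. I would first argue by counting that any such transversal $T$ has exactly $s$ points and meets each of the $s$ pairwise-disjoint $\Pi_x$ in exactly one point, so $T$ determines a bijective section $\theta:\Pi_x\mapsto\langle g(x)\rangle=T\cap\Pi_x$ with $g(x)=\sum_i c_i(x)f_i(x)\mathbf{e}_i$. The core is then a rigidity statement: along a pencil $\{\Pi_{ax+bx'}:(a:b)\in\PG(1,q^n)\}$ one has $W_{ax+bx'}\subseteq W_x\oplus W_{x'}$ with the $i$-th coordinate scaled by $\sigma_i(a),\sigma_i(b)$, so requiring all points $\theta(\Pi_{ax+bx'})$ to lie on the single $\fqn$-subspace $T$ forces the active coordinates of the section to transform by one common automorphism; since $\sigma_1,\ldots,\sigma_{h+1}$ are pairwise distinct, only one coordinate can stay active, i.e.\ $g(x)\in\langle\mathbf{e}_i\rangle_{\fqnt}$ for a fixed $i$ independent of $x$, whence $T=T_i$. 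Making this rigidity precise is the technical heart of the argument; an alternative route is to realize $L_{\underline f}$ as a projection of a Desarguesian spread and identify the transversal spaces with the images of the $h+1$ surviving director spaces, invoking their uniqueness.
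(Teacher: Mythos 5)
Your rank computation, condition (a), and the existence half of condition (b) are correct and essentially coincide with the paper's own argument (the paper derives the disjointness and counting statements from a short auxiliary lemma, Lemma \ref{lemma:span}, rather than from the coordinate comparison you use, but the content is the same). The proposal is incomplete, however, exactly at the point you flag: the uniqueness of the transversal spaces. The sentence ``requiring all points $\theta(\Pi_{ax+bx'})$ to lie on the single $\fqn$-subspace $T$ forces the active coordinates of the section to transform by one common automorphism'' is an assertion of the conclusion, not a proof of it; this is the entire hard half of the theorem, and you explicitly defer it (``making this rigidity precise is the technical heart'').

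Concretely, two things are missing. First, before any rigidity computation one needs a \emph{global} normalization of the section: the paper shows not merely that $T\not\subseteq K_\ell$ for some $\ell$ (which follows from $\bigcap_i K_i=\emptyset$) but that $T\cap K_\ell=\emptyset$. This is done by choosing spanning points $P_1,\ldots,P_t\in T\setminus K_\ell$ with $P_i\in\Pi_{x_i}$, using Lemma \ref{lemma:span} (together with the fact that every element of $\mathcal{P}$ meets $T$) to conclude that $x_1,\ldots,x_t$, and hence $f_\ell(x_1),\ldots,f_\ell(x_t)$, are $\fqn$-independent, so that no nontrivial $\fqn$-combination of the $P_i$ has vanishing $\ell$-th coordinate. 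Without this step your ``active coordinates'' need not be the same for different $x$, and the pencil-by-pencil argument has nothing to glue along. Second, the rigidity itself consists of two computations that must actually be carried out: additivity of the underlying space $\tilde{W}$ of $T$ forces each coefficient $\lambda_x^{(i)}$ to be independent of $x$ (compare $i$-th coordinates of sums and apply $\sigma_i^{-1}$, treating $\fqn$-dependent pairs via a third vector, which uses $t\geq2$), and then closure of $\tilde{W}$ under $\fqn$-scalar multiplication together with $\sigma_i\neq\sigma_\ell$ kills every coefficient with $i\neq\ell$. Finally, the alternative route you mention (realize $L_{\ff}$ as a projection of a Desarguesian spread and invoke uniqueness of director spaces) is circular inside this paper: Theorem \ref{thm:checceserve}, which states that such projections are of $h$-pseudoregulus type with the surviving director spaces as transversals, is itself deduced from the present theorem via Remark \ref{rem:equiv}, and Theorem \ref{th:fromgeometrytoalgebra} presupposes the full pseudoregulus structure; moreover, an extra transversal $T$ of $L_{\ff}$ does not directly satisfy the hypotheses of Proposition \ref{prop:desspreads}, so identifying it with a director space would still require an independent argument.
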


First we show the following lemma.

\begin{lemma}\label{lemma:span}
Let $x_1,\ldots, x_s \in \fqnt$. Then $x_1,\ldots, x_s$ are $\fqn$-independent if and only if $\dim \langle \Pi_{x_1},\ldots,\Pi_{x_s}\rangle=(h+1)s-1$.
Also, if $\Pi\in\mathcal{P}$ satisfies $\Pi \cap\langle\Pi_{x_1},\ldots,\Pi_{x_s}\rangle\ne\emptyset$, then $\Pi \subseteq \langle\Pi_{x_1},\ldots,\Pi_{x_s}\rangle$.
\end{lemma}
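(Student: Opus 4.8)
The plan is to reduce both statements to the block structure of the subspaces $W_x$. The starting observation is that $W_x$ is \emph{graded} with respect to the decomposition of $\fqnt^{h+1}$ into its $h+1$ coordinate blocks: each of its defining generators $(0,\ldots,f_i(x),\ldots,0)$ is supported in a single block, so $W_x$ is the internal direct sum, over $i=1,\ldots,h+1$, of the $\fqn$-lines $\langle f_i(x)\rangle_{\fqn}$ sitting in the $i$-th block, where $f_1=\mathrm{id}$. Since all the $W_{x_j}$ are graded in the same way, so is their sum $W:=W_{x_1}+\cdots+W_{x_s}$, and it splits blockwise as the direct sum of the spaces $V_i:=\langle f_i(x_1),\ldots,f_i(x_s)\rangle_{\fqn}$. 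As $\langle \Pi_{x_1},\ldots,\Pi_{x_s}\rangle=\PG(W,\fqn)$, everything will be read off from $\dim_{\fqn}W$.

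For the first claim I would compute $\dim_{\fqn}W=\sum_{i=1}^{h+1}\dim_{\fqn}V_i$ using the blockwise splitting. The essential input is that each $f_i$ is an invertible $\fqn$-semilinear map, hence carries $\fqn$-linearly independent tuples to $\fqn$-linearly independent tuples (and conversely, since $f_i^{-1}$ is again semilinear); thus $\dim_{\fqn}V_i=\dim_{\fqn}\langle x_1,\ldots,x_s\rangle_{\fqn}=:d$ for every $i$. Therefore $\dim_{\fqn}W=(h+1)d$, and since $\dim\langle\Pi_{x_1},\ldots,\Pi_{x_s}\rangle=\dim_{\fqn}W-1$, this dimension equals $(h+1)s-1$ exactly when $d=s$, i.e. exactly when $x_1,\ldots,x_s$ are $\fqn$-independent.

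For the incidence statement I would write the element of $\mathcal{P}$ as $\Pi_y$ and translate the hypothesis $\Pi_y\cap\langle\Pi_{x_1},\ldots,\Pi_{x_s}\rangle\neq\emptyset$ into $W_y\cap W\neq\{0\}$. A nonzero vector of $W_y$ has the form $(\lambda_1 y,\lambda_2 f_2(y),\ldots,\lambda_{h+1}f_{h+1}(y))$ with $\lambda_i\in\fqn$ not all zero; if it also lies in $W=\bigoplus_i V_i$, then each block component lies in $V_i$, and choosing an index $i_0$ with $\lambda_{i_0}\neq0$ gives $f_{i_0}(y)\in V_{i_0}$. Applying the semilinearity and injectivity of $f_{i_0}$ to pull this membership back yields $y\in\langle x_1,\ldots,x_s\rangle_{\fqn}$; pushing it forward through each $f_i$ gives $f_i(y)\in V_i$ for all $i$, whence $W_y\subseteq W$ and $\Pi_y\subseteq\langle\Pi_{x_1},\ldots,\Pi_{x_s}\rangle$.

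The bookkeeping with the coordinate blocks and the dimension count is routine; the step that requires genuine care, and where the hypotheses that the $f_i$ are invertible and semilinear are actually used, is the transfer of $\fqn$-(in)dependence and of $\fqn$-span membership across the maps $f_i$. Concretely, writing $f_{i_0}(y)=\sum_j a_j f_{i_0}(x_j)$ and using $a_j f_{i_0}(x_j)=f_{i_0}(\sigma_{i_0}^{-1}(a_j)x_j)$ forces one to apply $\sigma_{i_0}^{-1}$ to the coefficients before invoking injectivity; this is the only delicate point, and it is exactly why one must keep track of the companion automorphisms $\sigma_i$ throughout.
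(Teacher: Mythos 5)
Your proof is correct and rests on the same two ingredients as the paper's own argument: the blockwise (graded) structure of the spaces $W_x$ inside $\fqnt^{h+1}$, and the transfer of $\fqn$-(in)dependence and of span membership through the invertible semilinear maps $f_i$, with the companion automorphisms applied to the coefficients (your final caveat about $\sigma_{i_0}^{-1}$ is exactly the point the paper uses implicitly). The only difference is organizational: you compute $\dim_{\fqn}(W_{x_1}+\cdots+W_{x_s})=(h+1)\dim_{\fqn}\langle x_1,\ldots,x_s\rangle_{\fqn}$ directly from the graded decomposition, whereas the paper argues by contraposition from a point of $\Pi_{x_i}\cap\langle\Pi_{x_j}\colon j\ne i\rangle$; the substance, including the incidence argument, is the same.
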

\begin{proof}
Suppose $\dim \langle \Pi_{x_1},\ldots,\Pi_{x_s}\rangle<(h+1)s-1$.
Let $i \in \{1,\ldots,s\}$ such that there exists $P \in \Pi_{x_i}\cap \langle \Pi_{x_j} \colon j\ne i \rangle$.
Write $P=\langle(\lambda_1 x_i,\ldots,\lambda_{\ell}f_{\ell}(x_i),\ldots,\lambda_{h+1}f_{h+1}(x_i))\rangle_{\fqn}$.
For some $\ell\in\{1,\ldots,h+1\}$, the $\ell$-th component of a vector defining $P$ is non-zero, i.e. $\lambda_\ell f_\ell (x_i)\ne0$ and $\lambda_\ell f_\ell(x_i)=\sum_{j\ne i}\mu_{j}f_\ell(x_j)$ with $\mu_j\in\fqn$.
This implies that $x_1,\ldots,x_s$ are $\fqn$-linearly dependent.
Conversely, suppose that $x_1,\ldots,x_s$ are $\fqn$-linearly dependent. Then there exists $i \in \{1,\ldots,s\}$ such that $x_i$ is an $\fqn$-linear combination of $\{x_j \colon j\ne i\}$. Thus, easy computations show that $\Pi_{x_i}\subseteq \langle \Pi_{x_j} \colon j \neq i \rangle$, and hence $\dim \langle \Pi_{x_1},\ldots,\Pi_{x_s}\rangle<(h+1)s-1$.
This also proves the last part of the claim.
\end{proof}

\noindent \emph{Proof of Theorem \ref{th:fromalgebratogeometry}.} 
Let $x\in\fqnt^*$; we determine $\Pi_x \cap L_\ff$.
Let $P=\langle (y,f_2(y),\ldots,f_{h+1}(y))\rangle_{\fqn}\in L_\ff\cap \Pi_x$ for some $y\in\fqnt^*$. Then there exist $\lambda_1,\ldots,\lambda_{h+1}\in\fqn$ such that
$$ (\lambda_1 x,\lambda_2 f_2(x),\ldots,\lambda_{h+1} f_{h+1}(x)) =(y,f_2(y),\ldots,f_{h+1}(y)).$$
Since $f_2,\ldots,f_{h+1}$ are invertible and $\fqn$-semilinear maps with companion automorphisms $\sigma_2,\ldots,\sigma_{h+1}$ respectively, this is equivalent to
$$ \left\{ \begin{array}{crcrcrcr}
y&=&\lambda_1 x, \\
\lambda_2&=&\lambda_1^{\sigma_2}, \\
&\vdots& \\
\lambda_{h+1}&=&\lambda_1^{\sigma_{h+1}}. \\
\end{array}\right.
 $$
Thus, $L_{\ff}\cap\Pi_x=\{\langle(\lambda_1 x,\lambda_1^{\sigma_2}f_2(x),\ldots,\lambda_1^{\sigma_{h+1}}f_{h+1}(x))\rangle_{\fqn}\mid\lambda_1\in\fqn^*\}$.
Let 
\[ W=\langle\mathbf{e}_1\rangle_{\fqn}+\ldots+ \langle\mathbf{e}_{h+1}\rangle_{\fqn}=\fqn^{h+1} \subset \fqnt^{h+1}.\]
Clearly, the map $F_x\colon W_x \rightarrow W$ defined by
\[ F_x(\lambda_1x,\ldots,\lambda_{h+1} f_{h+1}(x))=(\lambda_1,\ldots,\lambda_{h+1}), \]
is an invertible $\fqn$-linear map such that
\begin{equation}\label{eq:standard}
F_x(U_\ff\cap W_x)=\{(\lambda,\lambda^{\sigma_2},\ldots,\lambda^{\sigma_{h+1}})\mid\lambda\in\fqn^*\};
\end{equation}
hence $L_\ff\cap\Pi_x$ and the linear set $\{\langle(\lambda,\lambda^{\sigma_2},\ldots,\lambda^{\sigma_{h+1}})\rangle_{\fqn}\mid\lambda\in\fqn^*\}$ are ${\rm PGL}(h+1,q^n)$-equivalent, and the weight of $\Pi_x$ with respect to $L_\ff$ is $n$.
By Lemma \ref{lemma:span}, we get {\it (a)} of Definition \ref{def:hpseudo}.
Also, since $\sigma_2,\ldots,\sigma_{h+1}$ are pairwise distinct, $\langle L\cap \Pi_x \rangle=\Pi_x$ for every $x\in\fqnt^*$ and hence $\langle L \rangle=\Lambda$.
Clearly, $\Lambda=\langle T_1,\ldots,T_{h+1}\rangle$ and $T_i\cap\Pi_x\ne\emptyset$ for each $i=1,\ldots,h+1$ and $x\in\fqnt^*$.

Furthermore, $L_{\ff}\cap K_{i_0}=\emptyset$ for each $i_0\in\{1,\ldots,h+1\}$. Indeed, if $L_{\ff}\cap K_{i_0}\ne\emptyset$, then there exists $y\in\fqnt^*$ such that $f_{i_0}(y)=0$, a contradiction.

Now we prove that $T_1,\ldots,T_{h+1}$ are the unique transversal spaces of $\mathcal{P}$.
Let $T$ be a transversal space; since by Definition \ref{def:hpseudo} $\mathcal{P}$ has exactly $\frac{q^{nt}-1}{q^n-1}$ pairwise disjoint elements and $|\mathcal{P}|=|T|$, we have that $T$ intersects every $\pi\in\mathcal{P}$ in exactly one point.
As $\cap_{i=1}^{h+1}K_i=\emptyset$, there exists $\ell\in\{1,\ldots,h+1\}$ such that $T\not\subseteq K_\ell$; we show that $T=T_\ell$.
Let $P_1,\ldots,P_t\in T\setminus K_{\ell}$ be such that $T=\langle P_1,\ldots,P_t\rangle$. For any $i$, let $P_i \in \Pi_{x_i}$.
For each $\Pi\in\mathcal{P}$, we have $\Pi\cap T\ne\emptyset$; since $T\subseteq\langle\Pi_{x_1},\ldots, \Pi_{x_t}\rangle$, Lemma \ref{lemma:span} yields $\Pi\subseteq\langle\Pi_{x_1},\ldots, \Pi_{x_t}\rangle$.
Thus, $\dim \langle \Pi_{x_1},\ldots,\Pi_{x_t}\rangle =(h+1)t-1$ and hence, by Lemma \ref{lemma:span}, $x_1,\ldots,x_t$ are $\fqn$-linearly independent.
For any $i=1,\ldots,t$, write $P_i=\langle(\lambda_{x_i}^{(1)} x_i,\ldots,\lambda_{x_i}^{(\ell)}f_\ell(x_i),\ldots,\lambda_{x_i}^{(h+1)}f_{h+1}(x_i))\rangle_{\fqn}$ with $\lambda_{x_i}^{(j)}\in\fqn$. Since $P_i\notin K_\ell$, we have $\lambda_{x_i}^{(\ell)}f_{\ell}(x_i)\ne0$.
As $x_1,\ldots,x_t$ and hence $f_\ell(x_1),\ldots,f_\ell(x_t)$ are $\fqn$-linearly independent, we have $T\cap K_\ell=\emptyset$.
Therefore, since $T\cap\Pi_{x}\ne\emptyset$ for every $x\in\fqnt^*$, $T=\PG(\tilde{W},\fqn)$ with
\[\tilde{W}=\{(\lambda_x^{(1)} x,\lambda_x^{(2)}f_2(x),\ldots,\lambda_x^{(h+1)}f_{h+1}(x))\colon x\in\fqnt\},\]
where $\lambda_x^{(j)}\in\fqn$ and $\lambda_x^{(\ell)}\ne0$ for every $x \in \fqnt^*$.
Let $i \in \{1,\ldots,h+1\}$.
Since $\tilde{W}$ is an $\fqn$-vector space and $f_i$ is invertible and $\fqn$-semilinear with associated $\fq$-automorphism $\sigma_i$, we have, for all $x,y\in\fqnt$,
\[\lambda_x^{(i)}f_i(x)+\lambda_y^{(i)}f_i(y)=\lambda_{x+y}^{(i)}f_i(x+y),\]
and hence
\[\left((\lambda_x^{(i)})^{\sigma_i^{-1}}-(\lambda_{x+y}^{(i)})^{\sigma_i^{-1}}\right)x=\left((\lambda_y^{(i)})^{\sigma_i^{-1}}-(\lambda_{x+y}^{(i)})^{\sigma_i^{-1}}\right)y.\]
If $x$ and $y$ are $\fqn$-linearly independent, this yields $\lambda_x^{(i)}=\lambda_{x+y}^{(i)}=\lambda_y^{(i)}$; if $x$ and $y$ are $\fqn$-linearly dependent and $z\in \fqnt\setminus\langle x,y\rangle_{\fqn}$, then $\lambda_x^{(i)}=\lambda_{z}^{(i)}=\lambda_y^{(i)}$.
Thus, $\lambda_x^{(i)}=\lambda^{(i)}$ is constant for all $x\in\fqnt$.
Therefore,
\[\tilde{W}=\{(\beta_1 x,\ldots, f_\ell(x),\ldots,\beta_{h+1}g_{h+1}(x))\colon x\in\fqnt\},\]
where $\beta_i=\lambda^{(i)}/\lambda^{(\ell)}$ for every $i \in \{1,\ldots,h+1\}$.
Since $\tilde{W}$ is closed under scalar multiplication in $\fqn$ and $f_i$ is strictly $\fqn$-semilinear for every $i\ne1$, this implies that $\beta_i=0$ for $i\neq \ell$, i.e. $T=T_{\ell}$.
\qed

\begin{remark}
It is readily seen from  \eqref{eq:standard} that the slices $L_\ff \cap\Pi_x$ of the linear set $L_\ff$ lying on each space $\Pi_x$ are all ${\rm PGL}\left((h+1)t,q^n\right)$-equivalent.
\end{remark}

\begin{remark}\label{rem:equiv}
Let $V=V((h+1)t,q^n)=S_1\oplus\ldots\oplus S_{h+1}$ where $\dim_{\fqn}S_i=t$.
For $i=2,\ldots,h+1$, let $g_i:S_1\to S_i$ be an invertible strictly $\fqn$-semilinear map with companion automorphism $\sigma_i\in\mathrm{Gal}(\fqn:\fq)$, such that $\sigma_2,\ldots,\sigma_{h+1}$ are pairwise distinct.
Define
\[\tilde{U}_{\underline{g}}=\{v+g_2(v)+\ldots+g_{h+1}(v) \mid v \in S_1\}\]
and let $\varphi: V\to\fqnt^{h+1}$ be an $\fqn$-linear isomorphism such that $\varphi(S_i)=\langle\mathbf{e}_i\rangle_{\fqnt}$.
Then
\begin{equation}\label{eq:pseudogen}
\varphi(\tilde{U}_{\underline{g}})=U_\ff
\end{equation}
where $f_i:\fqnt\to\fqnt$ is the $i$-th component of $\varphi\circ g_i\circ\varphi^{-1} (x,0,\ldots,0)$, and $f_i$ has companion automorphism $\sigma_i$.

In the following we will say that $\tilde{U}_{\underline{g}}$ and $U_\ff$ are equivalent, because of  \eqref{eq:pseudogen}. 
\end{remark}

\subsection{Analytic shape of a linear set of $h$-pseudoregulus type}

We now want to prove a converse of Theorem \ref{th:fromalgebratogeometry}; that is, we show in the next result that every linear set of $h$-pseudoregulus type is equivalent to a linear set $L_\ff$.
More precisely,

\begin{theorem}\label{thm:maintheoremsec3}
Let $L$ be an $\fq$-linear set of $h$-pseudoregulus type in $\Lambda=\PG(V,\fqn)=\PG((h+1)t-1,q^n)$ and let $T_i=\PG(W_i,\fqn)$ be its transversal spaces for $i\in\{1,\ldots,h+1\}$.
Then there exist invertible strictly $\fqn$-semilinear maps $f_i:W_1\to W_i$, $i=2,\ldots,h+1$, with pairwise distinct companion $\fq$-automorphisms, such that
\[ L=\{\langle w+f_2(w)+\ldots+f_{h+1}(w) \rangle_{\fqn} \colon w \in W_1^*\}, \]
hence $L$ is equivalent to a linear set of Form \eqref{eq:linearpseudo}.
\end{theorem}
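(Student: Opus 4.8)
The plan is to build the maps $f_i$ directly from the direct-sum decomposition carried by the transversal spaces, then to read off semilinearity from the weight hypothesis and the distinctness of the companion automorphisms from $\langle L\rangle=\Lambda$; I will not need the projection machinery of Theorem \ref{th:projection}. First I would record the decomposition $V=W_1\oplus\cdots\oplus W_{h+1}$: since $\Lambda=\langle T_1,\ldots,T_{h+1}\rangle$ with each $T_i=\PG(W_i,\fqn)$ of projective dimension $t-1$ and $\dim\Lambda=(h+1)t-1$, a dimension count forces the sum to be direct, so $\dim_{\fqn}W_i=t$. Writing $L=L_U$ with $\dim_{\fq}U=nt$ and letting $\rho_i:V\to W_i$ be the associated projections, the hypothesis $L\cap K_{i_0}=\emptyset$ says exactly that $U\cap\bigoplus_{i\neq i_0}W_i=\{0\}$, so each $\rho_i|_U$ is injective; as $\dim_{\fq}U=\dim_{\fq}W_i=nt$, each $\rho_i|_U$ is an $\fq$-linear isomorphism. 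Setting $f_1=\mathrm{id}$ and $f_i:=\rho_i|_U\circ(\rho_1|_U)^{-1}:W_1\to W_i$ for $i\geq2$, I obtain $\fq$-linear isomorphisms with $U=\{w+f_2(w)+\cdots+f_{h+1}(w):w\in W_1\}$, so $L$ already has the desired shape \emph{as a set}; what remains is to prove that the $f_i$ are strictly $\fqn$-semilinear with pairwise distinct companions.

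The key step is to turn the weight condition into the statement that each $f_i$ sends $\fqn$-lines to $\fqn$-lines. Since the $\pi_j$ are $s=\frac{q^{nt}-1}{q^n-1}$ pairwise disjoint spaces each meeting every $T_i$, their traces on $T_1$ are pairwise disjoint nonempty point sets; as there are $s$ of them and $|T_1|=s$, each trace is a single point and $j\mapsto \pi_j\cap T_1$ is a bijection onto the points of $T_1$. Parametrizing the pseudoregulus by $w\in W_1^*$ via $\pi_w\cap T_1=\langle w\rangle_{\fqn}$, the $h+1$ points $\pi_w\cap T_i=\langle z_i\rangle_{\fqn}$ (with $z_i\in W_i$) are $\fqn$-independent, hence span $\pi_w$; let $W^{(w)}$ be its underlying space. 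Intersecting $U$ with $W^{(w)}$ and using injectivity of $\rho_1$, the intersection is parametrized by $B_w=\{\lambda\in\fqn: f_i(\lambda w)\in\langle z_i\rangle_{\fqn},\ i=2,\ldots,h+1\}$, and $w_L(\pi_w)=n$ forces $\dim_{\fq}B_w=n$, whence $B_w=\fqn$. Thus $f_i(\langle w\rangle_{\fqn})\subseteq\langle z_i\rangle_{\fqn}$, and equality follows by comparing $\fq$-dimensions; so each $f_i$ maps the line $\langle w\rangle_{\fqn}$ onto the line $\langle f_i(w)\rangle_{\fqn}$.

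From here I would extract semilinearity by hand, since the case $t=2$ is allowed and the naive fundamental-theorem-of-projective-geometry citation does not apply to a projective line. For each $w$ define $\theta_w^{(i)}:\fqn\to\fqn$ by $f_i(\lambda w)=\theta_w^{(i)}(\lambda)\,f_i(w)$; this is an $\fq$-linear bijection fixing $1$. Taking two $\fqn$-independent vectors $w,w'$ (available as $t\geq2$), additivity of $f_i$ together with the $\fqn$-independence of $f_i(w),f_i(w')$ gives $\theta_{w+w'}^{(i)}=\theta_w^{(i)}=\theta_{w'}^{(i)}$, and a short linking argument shows $\theta_w^{(i)}=:\sigma_i$ is independent of $w$; expanding $f_i(\lambda\mu w)$ in two ways then yields multiplicativity, so $\sigma_i\in\mathrm{Gal}(\fqn:\fq)$ and $f_i$ is $\fqn$-semilinear with companion $\sigma_i$. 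To see the $\sigma_i$ are pairwise distinct (with $\sigma_1=\mathrm{id}$), I would project $\langle L\rangle=\Lambda$ onto each pair $W_i\oplus W_j$: the image is the $\fqn$-span of the graph of the semilinear map $f_j\circ f_i^{-1}$, which has companion $\sigma_j\sigma_i^{-1}$, and this span fills $W_i\oplus W_j$ if and only if $\sigma_j\sigma_i^{-1}\neq\mathrm{id}$. Since $\langle L\rangle=\Lambda$ forces the span to be all of $W_i\oplus W_j$, we get $\sigma_i\neq\sigma_j$; in particular each $\sigma_i\neq\mathrm{id}$, so the $f_i$ are strictly semilinear, and by Remark \ref{rem:equiv} the resulting description of $L$ is equivalent to Form \eqref{eq:linearpseudo}.

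I expect the main obstacle to be the weight-to-line-preservation translation of the second paragraph, namely identifying $U\cap W^{(w)}$ correctly and deducing $B_w=\fqn$; conceptually this is the step where the combinatorial weight datum becomes an algebraic constraint. The $w$-independence of $\theta_w^{(i)}$ in the third paragraph is the other delicate point, precisely because the usual collineation argument is unavailable in the excluded range $t=2$ and must be replaced by the direct comparison of independent vectors.
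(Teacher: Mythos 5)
Your proof is correct, and it takes a genuinely different route from the paper's. The paper never argues directly inside $\Lambda$: it realizes $L$ as a projection $p_{\Gamma,\Lambda}(\Sigma)$ of a subgeometry $\Sigma\subset\Sigma^*=\PG(nt-1,q^n)$ (Theorem \ref{th:projection}), proves that the cones $\langle\Gamma,\pi\rangle\cap\Sigma$, $\pi\in\mathcal{P}$, form a Desarguesian $(n-1)$-spread of $\Sigma$ whose director spaces split into $n-h-1$ spanning the vertex $\Gamma$ and $h+1$ projecting onto the transversal spaces (Theorem \ref{th:fromgeometrytoalgebra}, resting on Proposition \ref{prop:desspreads} and Lemma \ref{lemma:lunardon}), computes the projection from such a vertex in coordinates, where the maps $f_i$ arise as powers $g^{\ell_i}$ of the semilinear map inducing the collineation $\Psi$ with $\mathrm{Fix}(\Psi)=\Sigma$ (Theorem \ref{thm:checceserve}), and finally moves the description to the prescribed axis via Proposition \ref{prop:axisimmaterial}. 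You instead stay entirely inside $\Lambda$: the graph shape of $U$ comes from the decomposition $V=W_1\oplus\cdots\oplus W_{h+1}$ together with $L\cap K_{i_0}=\emptyset$; the weight-$n$ hypothesis, combined with the pigeonhole bijection between $\mathcal{P}$ and the points of each $T_i$, shows that each $f_i$ maps $\fqn$-lines onto $\fqn$-lines; semilinearity is then extracted by hand (your two-independent-vectors argument is both necessary and sufficient here, since $t=2$ is allowed and rules out a fundamental-theorem-of-projective-geometry shortcut), and distinctness of the companions follows from $\langle L\rangle=\Lambda$ projected onto the pairs $W_i\oplus W_j$. I checked the two steps you flag as delicate --- the identification of $U\cap W^{(w)}$ with $B_w$ forcing $B_w=\fqn$, and the $w$-independence of $\theta_w^{(i)}$ --- and both are sound. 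The trade-off: the paper's detour through $\Sigma^*$ simultaneously establishes the structural description of these linear sets as projections of Desarguesian spreads from spans of director spaces, which is the central theme of the paper and the content of Theorem \ref{th:fromgeometrytoalgebra}; your argument buys economy and self-containedness, producing the $f_i$ intrinsically as $\rho_i|_U\circ(\rho_1|_U)^{-1}$ with no subgeometry or spread machinery, and treating the boundary cases $t=2$ and $n=h+1$ uniformly.
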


In order to prove Theorem \ref{thm:maintheoremsec3}, we need to look at the considered linear set as projection of a subgeometry. 

\begin{theorem}\label{thm:checceserve}
Let $\Sigma=\PG(nt-1,q)$ be a subgeometry of $\Sigma^*=\PG(S^*,\fqn)=\PG(tn-1,q^n)$, $\Psi\in{\rm P\Gamma L}(\Sigma^*)$ be a collineation of order $n$ such that $\mathrm{Fix}(\Psi)=\Sigma$.
%$N=\{0,\ldots,n-1\}$ and $J=\{i_1,\ldots,i_{h+1}\}$ be a $(h+1)$-subset of $N$.
Let $i_1,\ldots,i_{h+1}$ be distinct elements of $\{0,\ldots,n-1\}$.
Let $\mathcal{D}$ be a Desarguesian $(n-1)$-spread of $\Sigma$ with director subspace $\Theta$.
Define
\begin{itemize}
    \item $\Gamma=\langle\Theta^{\Psi^i}\mid i\in \{0,\ldots,n-1\}\setminus\{i_1,\ldots,i_{h+1}\}\rangle$,
    \item $\Lambda=\langle\Theta^{\Psi^i}\mid i\in \{i_1,\ldots,i_{h+1}\}\rangle$.
\end{itemize}
Let $L$ be the projection of $\Sigma$ from $\Gamma$ to $\Lambda$.
Then $L$ is an $\fq$-linear set of $h$-pseudoregulus type in $\Lambda$ with transversal spaces $\Theta^{\Psi^{i_1}},\ldots,\Theta^{\Psi^{i_{h+1}}}$ and $L$ is equivalent to $L_\ff$ as in \eqref{eq:linearpseudo}.
\end{theorem}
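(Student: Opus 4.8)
The plan is to exhibit the projected linear set $L$ concretely in coordinates adapted to the director spaces and then recognize it as a set of the form $L_{\underline{f}}$. Since $\Sigma=\mathrm{PG}(nt-1,q)$ is embedded in $\Sigma^*=\mathrm{PG}(tn-1,q^n)$ with $\Sigma=\mathrm{Fix}(\Psi)$ and $\Theta,\Theta^\Psi,\ldots,\Theta^{\Psi^{n-1}}$ span $\Sigma^*$, I would first fix an $\fqn$-basis of $S^*$ compatible with the decomposition $S^*=\Theta\oplus\Theta^\Psi\oplus\cdots\oplus\Theta^{\Psi^{n-1}}$ (using that each $\Theta^{\Psi^i}$ has dimension $t-1$, so each summand is a $t$-dimensional $\fqn$-subspace). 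Relative to such a basis, the semilinear collineation $\Psi$ acts as a cyclic shift of the $n$ blocks composed with the Frobenius $x\mapsto x^q$ on coordinates, and a point of $\Sigma=\mathrm{Fix}(\Psi)$ is one whose homogeneous coordinate vector is $\Psi$-invariant up to scalar. This is exactly the setup that produces the Desarguesian spread $\mathcal{D}(\Theta)$ via Lemma \ref{lemma:lunardon}.

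Next I would compute the projection $p_{\Gamma,\Lambda}$ explicitly. With $\Gamma=\langle\Theta^{\Psi^i}\mid i\notin\{i_1,\ldots,i_{h+1}\}\rangle$ the center and $\Lambda=\langle\Theta^{\Psi^{i_1}},\ldots,\Theta^{\Psi^{i_{h+1}}}\rangle$ the axis, projecting a point $P\in\Sigma$ from $\Gamma$ onto $\Lambda$ amounts to retaining only the coordinate blocks indexed by $i_1,\ldots,i_{h+1}$. Because $\Psi$ shifts blocks cyclically together with a Frobenius twist, the $\Psi$-invariance defining a point of $\Sigma$ forces the block in position $i_j$ to be the image of the block in position $i_1$ under the appropriate power of Frobenius; concretely the $i_j$-th block is obtained from the $i_1$-th by applying $x\mapsto x^{q^{i_j-i_1}}$. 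Writing $x\in\fqnt$ for the coordinate data carried by the $\Theta^{\Psi^{i_1}}$-block, the projection then reads as $x\mapsto (x, f_2(x),\ldots,f_{h+1}(x))$ where each $f_j$ is a strictly $\fqn$-semilinear map $W_1\to W_{i_j}$ with companion automorphism $\sigma_j:x\mapsto x^{q^{i_j-i_1}}$; these automorphisms are pairwise distinct precisely because $i_1,\ldots,i_{h+1}$ are distinct modulo $n$. Invertibility of each $f_j$ follows from $\Psi$ being a collineation. This identifies $L=p_{\Gamma,\Lambda}(\Sigma)$ with a linear set of Form \eqref{eq:linearpseudo}, so by Theorem \ref{th:fromalgebratogeometry} it is of $h$-pseudoregulus type, and the transversal spaces produced there are exactly the $T_i=\Theta^{\Psi^{i_j}}$.

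I would then verify directly that $\Gamma\cap\Lambda=\emptyset$ and $\Gamma\cap\Sigma=\emptyset$, which is needed for Theorem \ref{th:projection} to apply and for $p_{\Gamma,\Lambda}$ to be well defined on all of $\Sigma$: disjointness of $\Gamma$ from $\Lambda$ is immediate from the direct-sum block decomposition, while $\Gamma\cap\Sigma=\emptyset$ follows since a point fixed by $\Psi$ cannot lie in $\Gamma$ (its blocks in positions $i_1,\ldots,i_{h+1}$ would have to vanish, forcing, by $\Psi$-invariance, all blocks to vanish). Finally, the identification of the transversal spaces as the director spaces $\Theta^{\Psi^{i_j}}$ should be read off from the coordinate normal form together with the uniqueness of transversal spaces already established in the proof of Theorem \ref{th:fromalgebratogeometry}.

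The main obstacle I anticipate is making the coordinate bookkeeping rigorous: one must choose the basis of $S^*$ so that the $\Psi$-action is genuinely the clean cyclic-shift-plus-Frobenius form, and then track carefully how $\Psi$-invariance of a point of $\Sigma$ translates into the relations $f_j(x)=x^{q^{i_j-i_1}}$ after discarding the center coordinates. The subtlety is that the off-diagonal (center) blocks are not freely prescribable but are determined by the on-axis data through $\Psi$-invariance, and one has to confirm that eliminating them via the projection leaves exactly the semilinear dependence claimed, with the correct companion automorphisms and with strict semilinearity (i.e.\ $\sigma_j\neq\mathrm{id}$, equivalently $i_j\neq i_1$), rather than some spurious linear relation.
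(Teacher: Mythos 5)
Your proposal is correct and follows essentially the same route as the paper: the paper makes your coordinate bookkeeping rigorous by parametrizing $\Sigma$ as $\{\langle u+g(u)+\cdots+g^{n-1}(u)\rangle_{\fqn}\colon u\in W^*\}$, where $g$ is the semilinear map inducing $\Psi$ and $\Theta=\PG(W,\fqn)$ (justified by a dimension count), so that projecting from $\Gamma$ simply retains the summands $g^{i_1}(u),\ldots,g^{i_{h+1}}(u)$ and exhibits $L$ in the form \eqref{eq:linearpseudo} with companion automorphisms $\sigma^{i_j-i_1}$. The pseudoregulus structure and the transversal spaces are then obtained, exactly as you indicate, from Remark \ref{rem:equiv} and Theorem \ref{th:fromalgebratogeometry}, after the same disjointness checks $\Gamma\cap\Lambda=\emptyset$ and $\Gamma\cap\Sigma=\emptyset$ that license Theorem \ref{th:projection}.
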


\begin{proof}
As $\mathcal{D}$ is a Desarguesian $(n-1)$-spread of $\Sigma$ with director space $\Theta$, we have that $\mathrm{dim}\,\Theta=t-1$, the director spaces of $\mathcal{D}$ are $\Theta,\Theta^{\Psi},\ldots,\Theta^{\Psi^{n-1}}$, and $\Sigma^*=\langle\Theta,\Theta^{\Psi},\ldots,\Theta^{\Psi^{n-1}}\rangle$; see Subsection \ref{sec:desspreads}.
It follows that $\dim\,\Gamma=(n-h-1)t-1$ and $\dim\,\Lambda=(h+1)t-1$. Since $\Sigma^*=\langle\Gamma,\Lambda\rangle$, this implies $\Gamma\cap\Lambda=\emptyset$. Also, $\Gamma\cap\Sigma=\emptyset$ because the $\Theta^{\Psi^j}$'s are pairwise disjoint and cyclically permuted by $\Psi$.
Therefore by Theorem \ref{th:projection} the projection $L=p_{\Gamma,\Lambda}(\Sigma)$ is an $\fq$-linear set of rank $nt$ in $\Lambda$ such that $\Lambda=\langle L\rangle$.

Let $W$ be the $\fqn$-subspace of $S^*$ such that $\Theta=\PG(W,\fqn)$. Denoting by $g\in{\rm \Gamma L}(S^*)$ the semilinear map associated with $\Psi$, we have
\begin{equation}\label{eq:sigma}
\Sigma=\left\{\langle u+g(u)+\cdots+g^{n-1}(u)\rangle_{\fqn}\mid  u\in W^*\right\}.
\end{equation}
In fact, the right-hand side of \eqref{eq:sigma} is fixed by $\Psi$ elementwise and hence is contained in $\Sigma$; also, the dimension over $\fq$ of both right- and left-hand side is equal to $nt-1$, because $\dim_{\fqn}W=t$.
Thus,
\begin{equation}\label{eq:Lproj}
L=p_{\Gamma,\Lambda}(\Sigma)=\left\{\langle g^{i_1}(u)+\cdots+g^{i_{h+1}}(u)\rangle_{\fqn}\mid u\in 
W^*\right\}
\end{equation}
$$ =\left\{\langle v+g^{i_2-i_1}(v)+\cdots+g^{i_{h+1}-i_1}(v)\rangle_{\fqn}\mid v\in g^{i_1}(W)^*\right\}.$$
%$$ =\left\{\langle\left(x,f_1(x),\ldots,f_h (x)\right)\rangle_{\fqn}\mid x\in\fqnt^*\right\}. $$
By Remark \ref{rem:equiv}, $L$ is equivalent to $L_\ff$, where $f_j$ is the $j$-th component of $\varphi\circ g^{i_j-i_1}\circ\varphi^{-1}(x,0\ldots,0)$ and $S_j=g^{i_j}(W)$.
Denoting by $\sigma$ the companion automorphism of $g$, the automorphism associated with $f_j$ is $\sigma^{i_j-i_1}$, so that $f_j$ is not $\fqn$-linear.
%Here, we have coordinatized the vector space defining $\Lambda$ as the numerical $\fqnt$-vector space $g^{i_1}(W)\oplus\cdots\oplus g^{i_{h+1}}(W)= \fqnt\oplus\cdots\oplus\fqnt=\fqnt^{h+1}$;
%hence, $(x,0,\ldots,0)$ are the coordinates of $\underline{v}$, and $(0,\ldots,0,f_j(x),0\ldots,0)$ with $f_j(x)$ in the $(j+1)$-th position are the coordinates of $g^{i_{j+1}-i_1}(\underline{v})$.
%Note that, for any $j=1,\ldots,h$, the map $f_j:\fqnt\to\fqnt$ with $f(0)=0$ is invertible and $\fqn$-semilinear, with companion automorphism $\sigma_j:\fqn\to\fqn$, $\sigma_j(x)= x^{q^{s(i_{j+1}-i_1)}}$.
%Since $I=\{0,s(i_2-i_1),\ldots,s(i_{h+1}-i_1)\}$ is a Moore exponent set for $q$ and $n$, it follows from Theorem \ref{th:fromalgebratogeometry} that $L$ is a maximum $h$-scattered $\fq$-linear set in $\Lambda$ of $h$-presudoregulus type as in Equation \eqref{eq:pseudo}.
\end{proof}

\begin{theorem}\label{th:fromgeometrytoalgebra}
Let $L$ be an $\fq$-linear set of rank $nt$ in $\Lambda=\PG((h+1)t-1,q^n)$ of $h$-pseudoregulus type with $h$-presudoregulus $\mathcal{P}$, having transversal spaces $T_1,\ldots,T_{h+1}$.
Let $\Lambda$ be embedded in $\Sigma^*=\PG(tn-1,q^n)$.
Let $\Sigma=\PG(tn-1,q)$ be a subgeometry of $\Sigma^*$, $\Psi\in{\rm P\Gamma L}(\Sigma^*)$ be such that $\mathrm{Fix}(\Psi)=\Sigma$, and $\Gamma=\PG((n-h-1)t-1,q^n)\subseteq \Sigma^*$ be such that $L$ is the projection $p_{\Gamma,\Lambda}(\Sigma)$ from $\Gamma$ to $\Lambda$ of $\Sigma$.
Then
\begin{itemize}
\item the set $\mathcal{D}_L=\{\langle\Gamma,\pi\rangle\cap\Sigma\mid\pi\in\mathcal{P}\}$ is a Desarguesian $(n-1)$-spread of $\Sigma$;
\item there exists a subset $\{\ell_1:=0,\ell_2,\ldots,\ell_{h+1}\}$ of $\{0,1,\ldots,n-1\}$, and a director space $\overline{\Theta}$ of $\mathcal{D}_L$, such that $\Gamma=\langle\overline{\Theta}^{\Psi^j}\mid j\notin\{0,\ell_2,\ldots,\ell_{h+1}\}\rangle$;
\item $T_i=\langle \Gamma, \overline{\Theta}^{\Psi^{\ell_i}} \rangle \cap \Lambda$ for $i \in \{1,\ldots,h+1\}$.
%\item $L$ is equivalent to a linear set $L_{\ff}$ with $U_\ff$ as in Equation \eqref{eq:pseudo}.
\end{itemize}
\end{theorem}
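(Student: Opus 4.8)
The plan is to verify the three assertions in turn: the spread property by a dimension-and-counting argument, and the Desarguesian property together with the description of $\Gamma$ and the $T_i$ by producing a director space and invoking Proposition \ref{prop:desspreads}. First I would record the incidence data of the projection: $\Gamma\cap\Sigma=\emptyset$, $\Gamma\cap\Lambda=\emptyset$ and $\langle\Gamma,\Lambda\rangle=\Sigma^*$ (since $\dim\Gamma+\dim\Lambda+1=\dim\Sigma^*$). The quantitative tool is the weight--fibre correspondence: for every $\Omega=\PG(W,\fqn)\subseteq\Lambda$ one has $w_L(\Omega)=\dim_{\fq}(\langle\Gamma,\Omega\rangle\cap\Sigma)+1$, which I would prove by the same quotient argument as in Proposition \ref{prop:axisimmaterial}, transporting $U$, $W$ and $S$ through the isomorphism $\phi\colon V\to S^*/H$ and using $\Gamma\cap\Sigma=\emptyset$ so that $\phi$ is injective on $S$. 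Taking $\Omega=\pi\in\mathcal{P}$ (of weight $n$) shows that each $X_\pi:=\langle\Gamma,\pi\rangle\cap\Sigma$ is $(n-1)$-dimensional, while taking $\Omega=T_i$ shows $\langle\Gamma,T_i\rangle\cap\Sigma=\emptyset$; here $w_L(T_i)=0$ because $T_i\subseteq K_{i_0}$ for any $i_0\neq i$ and $L\cap K_{i_0}=\emptyset$.

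To see that $\mathcal{D}_L=\{X_\pi\mid\pi\in\mathcal{P}\}$ is an $(n-1)$-spread, a dimension count using $\Gamma\cap\Lambda=\emptyset$ and $\langle\Gamma,\Lambda\rangle=\Sigma^*$ gives $\langle\Gamma,\pi\rangle\cap\Lambda=\pi$, so every $P\in X_\pi$ satisfies $p_{\Gamma,\Lambda}(P)\in\pi$. Hence a common point of $X_\pi$ and $X_{\pi'}$ would project into $\pi\cap\pi'=\emptyset$, a contradiction, so the $X_\pi$ are pairwise disjoint; as there are $s=\frac{q^{nt}-1}{q^n-1}$ of them, each with $\frac{q^n-1}{q-1}$ points, they exhaust the $\frac{q^{nt}-1}{q-1}$ points of $\Sigma$.

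For the remaining assertions I would apply Proposition \ref{prop:desspreads}, which requires a $(t-1)$-dimensional director space. I would first note that the extensions $Y_\pi:=\langle X_\pi\rangle_{\fqn}$ are the unique $\Psi$-invariant $(n-1)$-subspaces with $Y_\pi\cap\Sigma=X_\pi$ (Lemma \ref{lemma:lunardon}) and are pairwise disjoint, since $Y_\pi\cap Y_{\pi'}$ is $\Psi$-invariant and would otherwise meet $\Sigma$ in $X_\pi\cap X_{\pi'}=\emptyset$. The director space $\overline{\Theta}$ is then sought as a $(t-1)$-dimensional lift of $T_1$ inside $M_1:=\langle\Gamma,T_1\rangle$ (which is disjoint from $\Sigma$ and satisfies $M_1\cap\Lambda=T_1$): a subspace mapping isomorphically onto $T_1$ under $p_{\Gamma,\Lambda}$ and meeting each $Y_\pi$ inside $M_1\cap Y_\pi$. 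The hard part is exactly this coherent choice: since $n>h+1$ the space $M_1\cap Y_\pi$ has positive dimension, so $\overline{\Theta}$ cannot be read off one fibre at a time, and its existence — together with the spanning condition $\langle\overline{\Theta},\overline{\Theta}^{\Psi},\ldots,\overline{\Theta}^{\Psi^{n-1}}\rangle=\Sigma^*$ — must be extracted from the semilinear structure, writing $\Sigma$ as in \eqref{eq:sigma} and following the action of the companion map $g$. Once $\overline{\Theta}$ is in hand, Proposition \ref{prop:desspreads} yields $\mathcal{D}_L=\mathcal{D}(\overline{\Theta})$, so $\mathcal{D}_L$ is Desarguesian with director spaces $\overline{\Theta}^{\Psi^j}$.

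It remains to distribute the $n$ director spaces between $\Gamma$ and the transversals. Each $\overline{\Theta}^{\Psi^j}$ is disjoint from $\Sigma$, and projecting it from $\Gamma$ produces either no contribution (when $\overline{\Theta}^{\Psi^j}\subseteq\Gamma$) or a $(t-1)$-subspace $\langle\Gamma,\overline{\Theta}^{\Psi^j}\rangle\cap\Lambda$ that meets every $\pi$, i.e. a transversal space of $\mathcal{P}$. Since by Definition \ref{def:hpseudo} and the uniqueness argument in Theorem \ref{th:fromalgebratogeometry} the transversal spaces are exactly $T_1,\ldots,T_{h+1}$, precisely $h+1$ of the director spaces project onto them; this singles out $\{0=\ell_1,\ell_2,\ldots,\ell_{h+1}\}$ with $T_i=\langle\Gamma,\overline{\Theta}^{\Psi^{\ell_i}}\rangle\cap\Lambda$, while the remaining $n-h-1$ director spaces lie in $\Gamma$ and, being pairwise disjoint of dimension $t-1$ inside the $((n-h-1)t-1)$-dimensional $\Gamma$, span it, giving $\Gamma=\langle\overline{\Theta}^{\Psi^j}\mid j\notin\{0,\ell_2,\ldots,\ell_{h+1}\}\rangle$. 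I expect the delicate bookkeeping here — ruling out director spaces in intermediate position with respect to $\Gamma$ — to be handled by comparison with the explicit model of Theorem \ref{thm:checceserve}.
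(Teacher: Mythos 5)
Your preliminary reductions are sound: the weight--fibre correspondence $w_L(\Omega)=\dim_{\fq}\bigl(\langle\Gamma,\Omega\rangle\cap\Sigma\bigr)$ (as $\fq$-vector dimension) is correct and gives both that each $X_\pi=\langle\Gamma,\pi\rangle\cap\Sigma$ is an $(n-1)$-subspace and that $\langle\Gamma,T_i\rangle\cap\Sigma=\emptyset$, and your disjointness-plus-counting argument does establish that $\mathcal{D}_L$ is an $(n-1)$-spread (the paper asserts this part in essentially one line). But the heart of the theorem --- that $\mathcal{D}_L$ is \emph{Desarguesian}, i.e.\ the existence of a $(t-1)$-dimensional subspace $\Theta$ with $\Sigma^*=\langle\Theta,\ldots,\Theta^{\Psi^{n-1}}\rangle$ meeting every $X_\pi^*=\langle X_\pi\rangle$ --- is exactly where your proposal stops: you state that the coherent choice of $\overline{\Theta}$ ``must be extracted from the semilinear structure'' and that the final bookkeeping is ``handled by comparison with the explicit model of Theorem \ref{thm:checceserve}'', but neither is an argument. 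Appealing to Theorem \ref{thm:checceserve} (or to the uniqueness argument inside Theorem \ref{th:fromalgebratogeometry}, which is carried out on the explicit model $L_{\ff}$) is moreover circular at this stage: those statements describe linear sets already known to arise from a Desarguesian spread, whereas the present theorem is precisely what allows Theorem \ref{thm:maintheoremsec3} to conclude that an abstract $L$ of $h$-pseudoregulus type has that form.

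The missing idea is the paper's explicit construction of the director space. Set $K_i=\langle\cup_{j\ne i}T_j\rangle$ and $\overline{K}_i=\langle\Gamma,K_i\rangle$; then $\dim\overline{K}_i=(n-1)t-1$ and $\overline{K}_i\cap\Sigma=\emptyset$ by Definition \ref{def:hpseudo}(b), so the $\Psi$-invariant space $\overline{K}_i\cap\overline{K}_i^{\Psi}\cap\cdots\cap\overline{K}_i^{\Psi^{n-1}}$ is empty. Define $\Theta=\overline{K}_1\cap\overline{K}_1^{\Psi}\cap\cdots\cap\overline{K}_1^{\Psi^{n-2}}$: an easy induction gives $\dim\Theta\geq t-1$, and emptiness of the full intersection forces $\dim\Theta=t-1$ and $\Sigma^*=\langle\Theta,\ldots,\Theta^{\Psi^{n-1}}\rangle$. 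That $\Theta$ meets each $X_\pi^*$ follows from a hyperplane argument: with $r=\pi\cap\overline{K}_1$, Definition \ref{def:hpseudo}(b) forces $H_\pi=X_\pi^*\cap\langle\Gamma,r\rangle$ to be a hyperplane of the $\Psi$-invariant space $X_\pi^*$ disjoint from $\Sigma$, so $H_\pi\cap H_\pi^{\Psi}\cap\cdots\cap H_\pi^{\Psi^{n-2}}$ is a single point of $X_\pi^*$ lying in $\Theta$; Proposition \ref{prop:desspreads} then applies. Your ``intermediate position'' worry is also resolved by this construction rather than by fibrewise inspection: running the same intersection argument on each $\overline{K}_i$ produces a director space $\Theta^{\Psi^{\ell_i}}$ with $\overline{K}_i=\langle\Theta^{\Psi^j}\mid j\ne\ell_i+1\rangle$ (the $n-1$ pairwise disjoint director spaces other than $\Theta^{\Psi^{\ell_i+1}}$ lie in $\overline{K}_i$ and span it by a dimension count), whence $\Gamma=\cap_{i}\overline{K}_i=\langle\Theta^{\Psi^j}\mid j\notin\{\ell_1+1,\ldots,\ell_{h+1}+1\}\rangle$, and $T_k=\cap_{i\ne k}K_i$ yields $\langle\Gamma,T_k\rangle=\langle\Gamma,\Theta^{\Psi^{\ell_k+1}}\rangle$ by comparing dimensions. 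So no director space can sit in intermediate position: each either helps span $\Gamma$ or spans with $\Gamma$ a space cutting $\Lambda$ in one of the $T_k$. Without this (or an equivalent) construction, your proof does not go through.
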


\begin{proof}
For $\pi\in\mathcal{P}$, let $X_{\pi}=\langle\Gamma,\pi\rangle\cap\Sigma$.
Since $L\cap\pi$ is an $\fq$-linear set in $\pi$ of rank $n$, $X_\pi$ is an $(n-1)$-subspace of $\Sigma$. Also, by Definition \ref{def:hpseudo}(a), $\mathcal{D}_L$ is an $(n-1)$-spread of $\Sigma$. We show that $\mathcal{D}_L$ is Desarguesian.

Let $T_1,\ldots,T_{h+1}$ be the transversal spaces of $\mathcal{P}$. For any $i=1,\ldots,h+1$, define the subspaces $K_i=\langle\cup_{j\ne i} T_j\rangle$ and $\overline{K}_i=\langle\Gamma,K_i\rangle$ of $\Sigma^*$.
Clearly, $\dim\, \overline{K}_i=(n-1)t-1$.
Also, $\overline{K}_i\cap\Sigma=\emptyset$; in fact, if $Q\in \overline{K}_i\cap\Sigma$, then $\langle \Gamma,Q\rangle\cap\Lambda\in p_{\Gamma,\Lambda}(\Sigma)=L$ and $\langle \Gamma,Q\rangle\cap\Lambda\in \overline{K}_i\cap\Lambda=\langle\cup_{j\ne i}T_j\rangle$, a contradiction to $L\cap\langle\cup_{j\ne i}T_j\rangle=\emptyset$ ($(b)$ of Definiton \ref{def:hpseudo}).
This implies that the $\Psi$-invariant subspace $\overline{K}_i\cap \overline{K}_i^{\Psi}\cap\cdots\cap \overline{K}_i^{\Psi^{n-1}}$ of $\overline{K}_i$ is empty.

Let $i=1$ and $\Theta=\overline{K}_1\cap \overline{K}_1^{\Psi}\cap\cdots\cap \overline{K}_1^{\Psi^{n-2}}$.
An easy induction on $n$ proves that $\dim\, \Theta \geq t-1$. Together with $\Theta\cap \overline{K}_1^{\Psi^{n-1}}=\emptyset$, this yields $\dim\, \Theta=t-1$ and $\Sigma^*=\langle \Theta,\Theta^{\Psi},\ldots,\Theta^{\Psi^{n-1}}\rangle$.

For any $\pi\in\mathcal{P}$, let $X_\pi^*=\langle X_\pi\rangle$ be the $(n-1)$-subspace of $\Sigma^*$ such that $X_\pi^*\cap\Sigma=X_\pi$; then $X_\pi^*\subseteq\langle\Gamma,\pi\rangle$. Since $X_\pi^*\cap\Sigma=\PG(n-1,q)$ and $\dim\, X_\pi^*=n-1$, Lemma \ref{lemma:lunardon} implies that $X_\pi^*$ is $\Psi$-invariant.
Let $r=\pi\cap\overline{K}_1$.
Since $\dim\, r=h-1$, we have that $\langle\Gamma,r\rangle$ is a hyperplane of $\langle\Gamma,\pi\rangle$.
Also, $X_\pi^*\not\subseteq\langle\Gamma,r\rangle$; otherwise, the non-empty subset $p_{\Gamma,\Lambda}(X_\pi)$ of $L$ would be contained in $\langle\Gamma,r\rangle\cap\Lambda=r\subset\langle\cup_{j\ne1}T_j\rangle$, a contradiction by $(b)$ of Definition \ref{def:hpseudo}.
Thus, $H_\pi=X_\pi^*\cap\langle\Gamma,r\rangle$ is a hyperplane of $X_\pi^*$, i.e. $\dim\, H_\pi=n-2$.
Since $H_\pi\subset \overline{K}_1$, we have $H_\pi\cap\Sigma=\emptyset$ and hence $H_\pi\cap H_\pi^{\Psi}\cap\cdots\cap H_\pi^{\Psi^{n-1}}=\emptyset$.
So one gets that $H_\pi\cap H_\pi^{\Psi}\cap\cdots\cap H_\pi^{\Psi^{n-2}}$ is a point $R_\pi$ of $X_\pi^*$.
Also, $R_\pi\in\Theta$ as $H_\pi\subset \overline{K}_1$. Therefore $R_\pi\in X_\pi^*\cap\Theta$, so that $X_\pi^*\cap\Theta$ is non-empty for any $\pi\in\mathcal{P}$. This implies that $\mathcal{D}_L$ is a Desarguesian spread of $\Sigma$ with director spaces $\Theta,\Theta^{\Psi},\ldots,\Theta^{\Psi^{n-1}}$, see Proposition \ref{prop:desspreads}.

By direct computations, $\overline{K}_1=\langle\Theta^{\Psi^{j}}\mid j\ne1\rangle$ and $\overline{K}_1\cap\Theta^{\Psi}=\emptyset$.
Arguing in the same way we obtain for any $i=1,\ldots,h+1$ that $\overline{K}_i\cap \overline{K}_i^{\Psi}\cap\cdots\cap \overline{K}_i^{\Psi^{n-2}}$ is a director space for $\mathcal{D}_L$, hence there exists $i \in \{1,\ldots,h+1\}$ such that $\overline{K}_i\cap \overline{K}_i^{\Psi}\cap\cdots\cap \overline{K}_i^{\Psi^{n-2}} =\Theta^{\Psi^{\ell_i}}$, $\overline{K}_i=\langle\Theta^{\Psi^j}\mid j\ne \ell_i+1\rangle$, and $\overline{K}_i\cap\Theta^{\Psi^{\ell_i+1}}=\emptyset$; here, $\ell_1=0,\ell_2,\ldots,\ell_{h+1}$ are distinct integers in $\{0,\ldots,n-1\}$.
Now we have
$$ \Gamma=\bigcap_{i=1}^{h+1}\overline{K}_i =\big\langle\Theta^{\Psi^j}\mid j\notin\{\ell_1+1,\ldots,\ell_{h+1}+1\}\big\rangle. $$
Finally, for every $k\in\{1,\ldots,h+1\}$ we have $\displaystyle T_k=\bigcap_{i\ne k} K_i$ and  \[\langle\Gamma,T_k\rangle\subseteq\bigcap_{i\ne k}\overline{K}_i=\bigcap_{i\ne k}\langle \Theta^{\Psi^j}\colon j\ne \ell_i+1\rangle=\langle\Gamma,\Theta^{\Psi^{\ell_k+1}}\rangle;\]
by comparing the dimensions, we get $\langle\Gamma,T_k\rangle=\langle\Gamma,\Theta^{\ell_k+1}\rangle$.
It follows that $T_k=\langle \Gamma, \Theta^{\Psi^{\ell_k+1}} \rangle \cap \Lambda$.
Writing $\overline{\Theta}=\Theta^{\Psi}$, the claim follows.
\end{proof}

%Up to ${\rm PGL}(\Sigma^*)$-equivalence  we can assume
%$$ \Lambda=\big\langle\Theta^{\Psi},\Theta^{\Psi^{\ell_2+1}},\ldots,\Theta^{\Psi^{\ell_{h+1}+1}}\big%\rangle. $$
%Using Theorem \ref{thm:checceserve}, $L$ is equivalent to $L_\ff$ for some $U_\ff$ as in \eqref{eq:pseudo}. Choosing $\overline{\Theta}:=\Theta^\Psi$ the claim is proved.

\noindent {\it Proof of Theorem {\rm \ref{thm:maintheoremsec3}.}}
By using the notations and the claims of Theorem \ref{th:fromgeometrytoalgebra}, there exist $h+1$ distinct non-negative integers $\ell_1=0,\ell_2,\ldots,\ell_{h+1}$ and a director space $\overline{\Theta}=\PG(\overline{W},\fqn)$ of $\mathcal{D}_L$ such that $T_i=\langle\Gamma,\overline{\Theta}^{\Psi^{\ell_i}}\rangle\cap\Lambda$ for every $i$.
Define $\overline{\Lambda}=\langle\overline{\Theta}^{\Psi^{\ell_j}}\colon j\in\{1,\ldots,h+1\}\rangle=\PG(\overline{V},\fqn)$.
By Theorem \ref{thm:checceserve}, the $\fq$-linear set $\overline{L}=p_{\Gamma,\overline{\Lambda}}(\Sigma)$ can be written as follows
\[\overline{L}=\left\{\langle u+g^{\ell_2}(u)+\cdots+g^{\ell_{h+1}}(u)\rangle_{\fqn}\mid u\in 
\overline{W}^*\right\},\]
where $g\in \Gamma\mathrm{L}(S^*)$ is the semilinear map associated with $\Psi$; see \eqref{eq:Lproj}. 
By Proposition \ref{prop:axisimmaterial}, there exists an $\fqn$-linear isomorphism $\omega:V\to\overline{V}$ such that $\omega(U)=\overline{U}$, where $L=L_U$ and $\overline{L}=L_{\overline{U}}$.
The transversal spaces of the $h$-pseudoregulus $\overline{\mathcal{P}}$ of $\overline{L}$ are defined by $\overline{W}_i=g^{\ell_i}(\overline{W})$, $i=1,\ldots,h+1$.
We have $\omega(W_i)=\overline{W}_i$, and hence the $\fqn$-semilinear invertible map $f_i=\omega^{-1}\circ g^{\ell_i}\circ\omega$ satisfies  $f_i(W_1)=W_i$.

Therefore, \[L=\varphi_{\omega}^{-1}(\overline{L})=\{\langle w+f_2(w)+\ldots+f_{h+1}(w) \rangle_{\fqn} \colon w \in W_1^*\}.\]
\qed

\section{Maximum $h$-scattered linear sets of $h$-pseudoregulus type}\label{sec:maxhscatt}

In this section we characterize maximum $h$-scattered linear sets of $h$-pseudoregulus type by means of Moore exponent sets of size $h+1$.
For $h=1$, maximum $1$-scattered linear set of $1$-pseudoregulus type are exactly the classical maximum scattered linear sets of pseudoregulus type in $\PG(2t-1,q^n)$, which have been characterized in \cite[Theorem 3.5]{LMPT:14}.
In this case, the only automorphism involved has the shape $x\in \fqn\mapsto x^{q^s}\in \fqn$ with $\gcd(s,n)=1$.

%Through this section, whenever $i_j$ is a non-negative integer, $\sigma_j$ denotes the $\mathbb{F}_q$-automorphism of $\mathbb{F}_{q^n}$ defined by $x\mapsto x^{q^{i_j}}$. 

Let $h\geq1$ and $t,n\geq2$. For $j=2,\ldots,h+1$, let $f_j:\fqnt\to\fqnt$ be an invertible $\fqn$-semilinear map with companion automorphism $\sigma_j\in\mathrm{Gal}(\fqn:\fq)$, $\sigma_j:x\mapsto x^{q^{i_j}}$, where $i_j\in\{0,\ldots,n-1\}$; also, assume that the automorphisms $\sigma_2,\ldots,\sigma_{h+1}$ are pairwise distinct. 
Let $I=\{i_1=0,i_2,\ldots,i_{h+1}\}$.
Define as in Section \ref{sec:hpseudo} the following objects: $\Lambda$, $\ff$, $U_\ff$, $L_\ff$, $\Pi_x$ for any $x\in\fqnt^*$, $T_1,\ldots,T_{h+1}$.

We start this section by characterizing, among the linear sets of $h$-pseudoregulus type, the maximum $h$-scattered ones.
Recall that, by Remark \ref{rem:equiv} and Theorem \ref{thm:maintheoremsec3}, every linear set of $h$-pseudoregulus type is equivalent to some $L_\ff$ as in \eqref{eq:linearpseudo}.

\begin{theorem}\label{th:mainsec4}
Let $L$ be an $\fq$-linear set of $h$-pseudoregulus type in $\PG((h+1)t-1,q^n)$, where $L$ is equivalent to $L_\ff$ as in \eqref{eq:linearpseudo}. For every $j=2,\ldots,h+1$, let $i_j\in\{1,\ldots,n-1\}$ be such that $x\mapsto x^{q^{i_j}}$ is the companion automorphism of $f_{j}$; let $I=\{0,i_2,\ldots,i_{h+1}\}$.
Then $L$ is maximum $h$-scattered if and only if $I$ is a Moore exponent set for $q$ and $n$.
\end{theorem}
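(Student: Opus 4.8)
The plan is to translate the $h$-scattered condition into a statement about vectors over $\fqnt$, and then to isolate the arithmetic of the exponent set $I$ by an argument that eliminates the dependence on $t$.

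First I would record that $\dim_{\fq}U_\ff=nt=\frac{(h+1)tn}{h+1}=\frac{rn}{h+1}$ with $r=(h+1)t$, which by Theorem \ref{th:bound} is the largest rank a non-subgeometry $h$-scattered linear set of $\Lambda$ can have; hence $L_\ff$ is maximum $h$-scattered if and only if it is $h$-scattered, and it suffices to characterise the latter. Since $\langle L_\ff\rangle=\Lambda$ is already established, $L_\ff$ is $h$-scattered exactly when $w_{L_\ff}(\Omega)\le h$ for every $\Omega=\PG(Z,\fqn)$ with $\dim_{\fqn}Z=h$. Unwinding the definition of weight, $L_\ff$ fails to be $h$-scattered precisely when there are $\fq$-independent $x_1,\dots,x_{h+1}\in\fqnt$ whose vectors $v(x_a)=(x_a,f_2(x_a),\dots,f_{h+1}(x_a))$ are $\fqn$-dependent. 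Writing such a dependence as $\sum_a\mu_a v(x_a)=0$ with $0\ne\mu=(\mu_1,\dots,\mu_{h+1})\in\fqn^{h+1}$ and applying the invertible semilinear $f_j^{-1}$ to the $j$-th coordinate equation (with $f_1=\mathrm{id}$) converts it into the $\fqn$-linear system $\sum_{a}\mu_a^{\sigma_j^{-1}}x_a=0$ for $j=1,\dots,h+1$. Thus $L_\ff$ is not $h$-scattered if and only if, for some $\mu\ne0$, the matrix $C_\mu=\big(\mu_a^{\sigma_j^{-1}}\big)_{j,a}$ over $\fqn$ has a kernel vector in $\fqnt^{h+1}$ whose coordinates are $\fq$-independent.

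The heart of the argument, and the step I expect to be hardest, is a lemma showing that this last condition does not depend on $t$. Fixing $\mu\ne0$, let $R_\mu$ be the $\fqn$-row space of $C_\mu$, so that the solution space of $C_\mu\mathbf{x}=0$ over $\fqnt$ is $(\ker_{\fqn}C_\mu)\otimes\fqnt=(R_\mu^{\perp})\otimes\fqnt$. A solution has $\fq$-dependent coordinates exactly when it is orthogonal to some nonzero $\mathbf{c}\in\fq^{h+1}$; hence a solution with $\fq$-independent coordinates exists if and only if no nonzero $\mathbf{c}\in\fq^{h+1}$ is orthogonal to the whole solution space, i.e. if and only if $R_\mu\cap\fq^{h+1}=\{0\}$ (using $(R_\mu^{\perp}\otimes\fqnt)^{\perp}=R_\mu\otimes\fqnt$ and $R_\mu\otimes\fqnt\cap\fqn^{h+1}=R_\mu$). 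The only nontrivial implication is producing a good $\mathbf{x}$ when $R_\mu\cap\fq^{h+1}=\{0\}$: each nonzero $\mathbf{c}\in\fq^{h+1}$ then carves a proper $\fqnt$-hyperplane out of the solution space, and since there are fewer than $q^{h+1}\le q^{nt}$ directions $\mathbf{c}$, these hyperplanes cannot cover the space, because a vector space over $\fqnt$ is never the union of $q^{nt}$ or fewer proper subspaces. The upshot is the $t$-free criterion: $L_\ff$ is $h$-scattered if and only if $R_\mu\cap\fq^{h+1}\ne\{0\}$ for every $\mu\in\fqn^{h+1}\setminus\{0\}$.

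Finally I would specialise this criterion to $t=1$ and the monomial maps $f_j(x)=x^{q^{i_j}}$: the right-hand condition involves only $\fqn$, $\fq$ and the automorphisms $\sigma_j$, so it is literally the same for $L_\ff$ and for the linear set $L_{U_\mathcal{C}}$ attached to $\mathcal{C}=\langle x,x^{q^{i_2}},\dots,x^{q^{i_{h+1}}}\rangle_\fqn$ in $\PG(h,q^n)$. Consequently $L_\ff$ is $h$-scattered if and only if $L_{U_\mathcal{C}}$ is scattered with respect to the hyperplanes of $\PG(h,q^n)$, which by Theorem \ref{th:MESescatt} holds if and only if $I$ is a Moore exponent set for $q$ and $n$; combined with the first step this yields the theorem. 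The essential difficulty resolved in the middle paragraph is exactly that the $x_a$ range over the large field $\fqnt$ while the Moore condition is a statement over $\fqn$: one cannot restrict a bad configuration to a single $\fqn$-coordinate slice, nor peel off exponents (a subset of a Moore exponent set need not be a Moore exponent set), and the subspace-covering argument is precisely what shows that the obstruction $R_\mu\cap\fq^{h+1}$ is already visible over $\fqn$.
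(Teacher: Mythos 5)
Your proposal is correct, and it takes a genuinely different route from the paper's proof. The paper treats the two implications separately and geometrically: for ``Moore $\Rightarrow$ maximum $h$-scattered'' it fixes an $\fqn$-basis $x_1,\ldots,x_t$ of $\fqnt$, decomposes $U_\ff=(U_\ff\cap W_{x_1})\oplus\cdots\oplus(U_\ff\cap W_{x_t})$, observes that each slice is $\mathrm{GL}(h+1,q^n)$-equivalent to $\{(\lambda,\lambda^{\sigma_2},\ldots,\lambda^{\sigma_{h+1}})\colon\lambda\in\fqn\}$ and hence $h$-scattered in $\Pi_{x_i}$ by Theorem \ref{th:MESescatt}, and then glues the slices with the direct-sum theorem (Theorem \ref{th:directsum}), finishing with the bound of Theorem \ref{th:bound}; for the converse it restricts $L$ to a single pseudoregulus element and applies Theorem \ref{th:MESescatt} to that restriction. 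You instead prove both implications at once: you convert the $h$-scattered condition into the $t$-free matrix criterion ``$R_\mu\cap\fq^{h+1}\neq\{0\}$ for every $\mu\in\fqn^{h+1}\setminus\{0\}$'' via the covering fact that a vector space over $\fqnt$ is never the union of $q^{nt}$ or fewer proper subspaces, and then identify this criterion with its $t=1$ instance, which is exactly the situation of Theorem \ref{th:MESescatt}. Your route buys self-containedness: it bypasses Theorem \ref{th:directsum} entirely (your covering lemma in effect re-proves the special case of it that the paper needs) and it isolates precisely why the property does not depend on $t$; the paper's route buys brevity and keeps the pseudoregulus geometry in view (the slices used in its decomposition are exactly the elements of the $h$-pseudoregulus, which is what later feeds Theorem \ref{th:uniqPseudo} and the equivalence results). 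Two small points you leave implicit are both fine but worth stating: when $R_\mu\cap\fq^{h+1}=\{0\}$ the kernel of $C_\mu$ is automatically nonzero (otherwise $R_\mu=\fqn^{h+1}$ would contain $\fq^{h+1}$), so the covering argument has a nonzero space to work in; and at $t=1$ your weight criterion is indeed equivalent to ``scattered with respect to the hyperplanes'' including its spanning requirement, since a rank-$n$ linear set with $n\geq h+1$ contained in a hyperplane would give that hyperplane weight $n>h$.
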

\begin{proof}
Assume that $I$ is a Moore exponent set for $q$ and $n$.
Let $x_1,\ldots,x_t$ be an $\fqn$-basis of $\fqnt$. Then by direct computations $\Lambda=\PG(W_{x_1}\oplus\ldots\oplus W_{x_t},\fqn)$ and
\begin{equation}\label{eq:uff}U_\ff=(U_\ff\cap W_{x_1})\oplus\ldots\oplus(U_\ff\cap W_{x_t}).
\end{equation}
By \eqref{eq:standard}, $U_\ff\cap W_{x_i}$ is $\mathrm{GL}(h+1,q^n)$-equivalent to $\{(\lambda,\lambda^{\sigma_2},\ldots,\lambda^{\sigma_{h+1}})\mid\lambda\in\fqn\}$. The latter defines a $h$-scattered $\fq$-linear set in $\PG(h,q^n)$ because of Theorem \ref{th:MESescatt}; thus,  $L_\ff\cap\Pi_{x_i}$ is a $h$-scattered $\fq$-linear set in $\Pi_{x_i}$.
From Theorem \ref{th:directsum} and \eqref{eq:uff} follows that $L_\ff$ is a $h$-scattered $\fq$-linear set in $\Lambda$.
As $L_\ff$ has rank $nt$, Theorem \ref{th:bound} shows that $L_\ff$ is a maximum $h$-scattered linear set.
The remaining parts of the claim follow from Theorem \ref{th:fromalgebratogeometry}.
Viceversa, assume that $L=L_\ff$ is a maximum $h$-scattered $\fq$-linear set of $h$-pseudoregulus type in $\Lambda=\PG((h+1)t-1,q^n)$.
Since $L$ is $h$-scattered, the linear set obtained as the intersection $L\cap\Pi$ of $L$ with an element $\Pi=\PG(h,q^n)$ of the $h$-pseudoregulus of $L$ is a $h$-scattered $\fq$-linear set of rank $n$ of $\Pi$ equivalent to 
\[\{\langle(\lambda,\lambda^{\sigma_2},\ldots,\lambda^{\sigma_{h+1}})\rangle_{\fqn}\mid\lambda\in\fqn^*\}.\] 
By Theorem \ref{th:MESescatt}, this proves that $I$ is a Moore exponent set for $q$ and $n$.
\end{proof}

%As a consequence of Proposition \ref{lemma:fieldintersection}, if $I$ is a Moore exponent set for $q$ and $n$, it follows that $L_\ff$ is scattered. In the next result we prove that it is also $h$-scattered.

%\begin{theorem}\label{th:hfromalgebratogeometry}
%With the same notations as above, suppose that $I=\{i_1=0,i_2,\ldots,i_{h+1}\}$ is a Moore exponent set for $q$ and $n$ of size $h+1$.
%Then $L_\ff$ is a maximum $h$-scattered $\fq$-linear set of $\Lambda$ of $h$-pseudoregulus type, with pseudoregulus $\mathcal{P}=\{\pi_x : x\in \fqnt^*\}$ and transversal spaces $T_1,\ldots,T_{h+1}$.
%\end{theorem}

%\begin{theorem}\label{th:hfromgeometrytoalgebra}
%Let $h\geq1$ and $t,n\geq2$.
%Let $L=L_\ff$ be a maximum $h$-scattered $\fq$-linear set of $h$-pseudoregulus type in $\Lambda=\PG((h+1)t-1,q^n)$.
%Then $I$ is a Moore exponent set for $q$ and $n$.
%\end{theorem}

%Next we show that for a maximum $h$-scattered linear set $L$ of $\Lambda$ of $h$-pseudoregulus type the following holds: either $L$ is a subgeometry of $\Lambda$, and hence every Desarguesian $h$-spread of $L$ together is a $h$-pseudoregulus of $L$; or it has a unique $h$-pseudoregulus.

We show that every maximum $h$-scattered linear set in $\Lambda$ of $h$-pseudoregulus type, other than the subgeometries of $\Lambda$, has a unique $h$-pseudoregulus.

%We start by an elementary but useful remark.
%\begin{remark} \textcolor{red}{(lo sto davvero usando?)}
%Let $L$ be a maximum $h$-scattered $\fq$-linear set in $\Lambda=\PG((h+1)t-1,q^n)$, and suppose that $L$ is not a subgeometry of %$\Lambda$. Then $L$ is not $(h+1)$-scattered.
%\end{remark}
%\begin{proof}
%From the hypothesis and Theorem \ref{th:bound} follows that $L$ has rank $tn$, which is strictly bigger than %$\frac{(h+1)tn}{h+2}$. Thus, by Theorem \ref{th:bound}, $L$ is not $(h+1)$-scattered.
%\end{proof}

\begin{theorem}\label{th:uniqPseudo}
Let $L$ be a maximum $h$-scattered $\fq$-linear set in $\Lambda=\PG((h+1)t-1,q^n)$ of $h$-pseudoregulus type, with $h$-pseudoregulus $\mathcal{P}$. Then one of the following holds.
\begin{itemize}
    \item $n=h+1$, $L$ is a subgeometry $\PG((h+1)t-1,q)$ of $\Lambda$; every Desarguesian $h$-spread of $L$ is a $h$-pseudoregulus associated with $L$, whose transversal spaces are the director spaces of the spread.
    \item $n>h+1$, then $w_L(\pi)\leq \frac{hn}{h+1}+1$ for any $h$-subspace $\pi$ of $\Lambda$ with $\pi\notin\mathcal{P}$; in particuar, $\mathcal{P}$ is the unique $h$-pseudoregulus associated with $L$.
\end{itemize}
\end{theorem}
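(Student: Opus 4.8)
The plan is to split according to whether $n=h+1$ or $n>h+1$; these exhaust all cases, since $\langle L\rangle=\Lambda$ forces $n\ge h+1$. When $n=h+1$, the rank of $L$ equals $nt=(h+1)t=\dim_{\fqn}V$ and $L$ spans $\Lambda$, so by the standard subgeometry criterion (if $\dim_{\fq}U=\dim_{\fqn}V$ and $\langle U\rangle_{\fqn}=V$ then $L_U$ is a subgeometry) the set $L$ is a subgeometry $\PG((h+1)t-1,q)$ of $\Lambda$; when $n>h+1$ the rank $nt$ exceeds $\dim_{\fqn}V=(h+1)t$, so $L$ is not a subgeometry. I would then prove the two bullets in turn.

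For $n=h+1$ I would use that, since $h=n-1$, a Desarguesian $h$-spread of $L$ is exactly a Desarguesian $(n-1)$-spread $\mathcal{D}$ in the sense of Subsection \ref{sec:desspreads}. Realising $L=\mathrm{Fix}(\Psi)$ inside $\Lambda=\Sigma^\ast$, such a spread comes with director spaces $\Theta,\Theta^{\Psi},\ldots,\Theta^{\Psi^{n-1}}$, all $(t-1)$-subspaces of $\Lambda$. I would then invoke Theorem \ref{thm:checceserve} in the degenerate instance $\{i_1,\ldots,i_{h+1}\}=\{0,1,\ldots,n-1\}$: here the centre $\Gamma$ is spanned by the empty family, hence is empty, $\Lambda=\Sigma^\ast$, and the projection is the identity, so the resulting linear set is $L=\Sigma$ itself. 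Theorem \ref{thm:checceserve} then certifies that $L$ is of $h$-pseudoregulus type, that its associated $h$-pseudoregulus is the family of $\fqn$-extensions of the elements of $\mathcal{D}$, and that its transversal spaces are precisely the director spaces $\Theta^{\Psi^{j}}$. Since by Proposition \ref{prop:desspreads} every Desarguesian $(n-1)$-spread of $L$ is of the form $\mathcal{D}(\Theta)$ for a director space $\Theta$, this yields the first bullet for every such spread.

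For $n>h+1$, by Theorem \ref{th:mainsec4} I may assume $L=L_{U}$ with $U=\{\gamma(x):x\in\fqnt\}$, $\gamma(x)=(x,f_2(x),\ldots,f_{h+1}(x))$, and $I=\{0,i_2,\ldots,i_{h+1}\}$ a Moore exponent set for $q$ and $n$; weights being invariant under equivalence, this loses no generality. Fix an $h$-subspace $\pi=\PG(W)\notin\mathcal{P}$ and put $X=\{x\in\fqnt:\gamma(x)\in W\}$, so that $w_L(\pi)=\dim_{\fq}X=:w$. The core of the proof is a decomposition of $X$ along $\fqn$-directions. For each $\fqn$-line $\langle y\rangle_{\fqn}\subseteq\langle X\rangle_{\fqn}$ meeting $X$, set $X_y=X\cap y\fqn$ and $w_y=\dim_{\fq}X_y$; from $X=\sum_y X_y$ I get $w\le\sum_y w_y$. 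Now $\gamma(X_y)$ lies in the vector space $W_y$ of the pseudoregulus element $\Pi_y$, and distinct $\Pi_y$ are disjoint, so the subspaces $\langle\gamma(X_y)\rangle_{\fqn}$ meet pairwise trivially inside $\langle\gamma(X)\rangle_{\fqn}\subseteq W$; writing $m_y=\dim_{\fqn}\langle\gamma(X_y)\rangle_{\fqn}$ this forces $\sum_y m_y\le\dim_{\fqn}W=h+1$. Finally $\gamma(X_y)$ is, up to an $\fqn$-linear isomorphism, the set $\{(\lambda,\lambda^{\sigma_2},\ldots,\lambda^{\sigma_{h+1}}):\lambda\in E_y\}$ with $\dim_{\fq}E_y=w_y$, and here the Moore exponent set hypothesis enters: a square Moore matrix $M_{A,I}$ evaluated at $\fq$-independent elements is nonsingular, which (after extending a basis of $E_y$ to $h+1$ independent elements) gives $m_y=\min(w_y,h+1)$.

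Putting these together, if some $w_y\ge h+1$ then $m_y=h+1$ makes $y$ the unique active direction, so $X\subseteq y\fqn$; then either $X=y\fqn$, whence $\langle\gamma(X)\rangle_{\fqn}=W_y$ by \eqref{eq:standard} forces $W=W_y$ and $\pi=\Pi_y\in\mathcal{P}$ (excluded), or $X\subsetneq y\fqn$, in which case $W\ne W_y$, the subspace $\PG(W\cap W_y)$ has dimension at most $h-1$, and $h$-scatteredness (via \cite[Proposition 2.1]{CMPZu}, so that $(h-1)$-subspaces have weight at most $h$) bounds $w=w_y$ by $h$, a contradiction. Hence $w_y\le h$ for all $y$, so $m_y=w_y$ and $w\le\sum_y w_y=\sum_y m_y\le h+1\le\frac{hn}{h+1}+1$, where the last inequality uses $n\ge h+1$; this is the required weight bound. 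Uniqueness is then immediate: since $n>h+1$ gives $h+1<n$, every $h$-subspace of weight $n$ lies in $\mathcal{P}$, and any $h$-pseudoregulus of $L$ consists of $\frac{q^{nt}-1}{q^{n}-1}$ subspaces of weight $n$, hence coincides with $\mathcal{P}$. I expect the main obstacle to be exactly this weight estimate: channeling the maximum $h$-scattered hypothesis through the Moore-matrix identity $m_y=\min(w_y,h+1)$ and correctly disposing of the single-direction degeneracy via the $h$-scattered intersection bound, whereas the reduction and the subgeometry case are routine.
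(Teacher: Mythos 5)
Your reduction and your treatment of the case $n=h+1$ are sound and agree in substance with the paper: the paper also gets the subgeometry from the rank count and reads the pseudoregulus structure off the Desarguesian spread, and your degenerate instance of Theorem \ref{thm:checceserve} (taking $\{i_1,\ldots,i_{h+1}\}=\{0,\ldots,n-1\}$, so that $\Gamma=\emptyset$ and the projection is the identity) is a legitimate way of making the paper's ``follows immediately'' explicit. The case $n>h+1$, however, contains a genuine gap, exactly at the step you yourself identify as the core: from the fact that the subspaces $\langle\gamma(X_y)\rangle_{\fqn}$ pairwise intersect trivially inside $W$, you conclude $\sum_y m_y\le\dim_{\fqn}W=h+1$. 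This inference is false: pairwise trivial intersection does not make a family of subspaces independent, and their dimensions can sum to far more than the ambient dimension (many distinct lines through the origin of a plane intersect pairwise trivially). This failure really occurs in your setting: take $h=1$, two $\fqn$-independent elements $y_1,y_2\in\fqnt$, and $W=\langle\gamma(y_1),\gamma(y_2)\rangle_{\fqn}$, so that $\pi=\PG(W,\fqn)\notin\mathcal{P}$. Then $X\supseteq\langle y_1,y_2\rangle_{\fq}$ meets the $q+1$ distinct directions $\langle y_2\rangle_{\fqn}$ and $\langle y_1+\lambda y_2\rangle_{\fqn}$, $\lambda\in\fq$, each with $w_y\ge 1$, hence $\sum_y m_y\ge q+1>2=h+1$. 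So the final inequality in your chain $w\le\sum_y w_y=\sum_y m_y\le h+1$ is simply false, and the weight bound is not established.

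Note moreover that the bound you are aiming at, $w_L(\pi)\le h+1$ for every $h$-subspace $\pi\notin\mathcal{P}$, is much stronger than what the theorem asserts ($w_L(\pi)\le\frac{hn}{h+1}+1$), and your tools only reach the two extreme cases: if $\langle X\rangle_{\fqn}$ has $\fqn$-dimension $1$, the Moore-matrix argument does give $w\le h$ (otherwise $\langle\gamma(X)\rangle_{\fqn}=W_y=W$ and $\pi\in\mathcal{P}$); and if it has dimension $h+1$, then Lemma \ref{lemma:span} together with $\bigcap_j\mathrm{Fix}(\sigma_j)=\fq$ (Proposition \ref{lemma:fieldintersection}) gives $w=h+1$. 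Nothing in your argument handles the intermediate cases $2\le\dim_{\fqn}\langle X\rangle_{\fqn}\le h$, which occur as soon as $h\ge2$. The paper's proof bypasses coordinates entirely and covers all configurations at once: for $\Pi\in\mathcal{P}$ meeting $\pi$, with $k=\dim(\Pi\cap\pi)\le h-1$, it applies the rank bound of Theorem \ref{th:bound} to $L\cap\langle\Pi,\pi\rangle$ inside the $(2h-k)$-dimensional space $\langle\Pi,\pi\rangle$, and plays it against the lower bound $w_L(\langle\Pi,\pi\rangle)\ge w_L(\Pi)+w_L(\pi)-w_L(\Pi\cap\pi)\ge n+w_L(\pi)-(k+1)$, where $w_L(\Pi\cap\pi)\le k+1$ comes from $L$ being $(k+1)$-scattered; this yields $w_L(\pi)\le\frac{(h-k)n}{h+1}+k+1\le\frac{hn}{h+1}+1<n$. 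To repair your proof you would need either a genuine argument for the intermediate-dimension cases, or to switch to an argument of this synthetic type.
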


\begin{proof}
Since $L$ is of $h$-pseudoregulus type we have $n\geq h+1$.

Suppose $n=h+1$. Then the rank of $L$ equals $\dim\,\Lambda+1$, and hence $L$ is a subgeometry $\PG((h+1)t-1,q)$ of $\Lambda=\PG((h+1)t-1,q^{h+1})$. 
Let $\mathcal{D}$ be a Desarguesian $h$-spread of $L$. From the properties of Desarguesian spreads (see Subsection \ref{sec:desspreads}) follows immediately that $\mathcal{D}$ is a $h$-pseudoregulus for $L$, whose transversal spaces are its director spaces in $\Lambda$.

Suppose $n>h+1$. Let $\pi\notin\mathcal{P}$ be a $h$-subspace of $\Lambda$ with $\pi\cap L\ne\emptyset$, and $\Pi$ be an element of $\mathcal{P}$ such that $\Pi\cap\pi\ne\emptyset$.
Define $k=\dim(\Pi\cap\pi)\in\{0,\ldots,h-1\}$.
Then the subspace $\langle\Pi,\pi\rangle$ of $\Lambda$ has dimension $2h-k$. Together with Theorem \ref{th:bound} applied to the linear set $L\cap\langle\Pi,\pi\rangle$ in $\langle\Pi,\pi\rangle$, this implies
\[w_L(\langle\Pi,\pi\rangle)\leq \frac{(2h-k+1)n}{h+1}.\]
On the other hand $\langle L\cap\Pi,L\cap\pi\rangle$ is a subspace of $L\cap\langle\Pi,\pi\rangle$, and since $L$ is $m$-scattered for each $1\leq m\leq h$, we have
\[w_L(\langle\Pi,\pi\rangle)\geq w_L(\Pi)-w_L(\pi)-w_L(\Pi\cap\pi)\geq n+w_L(\pi)-(k+1).\]
Therefore,
\begin{equation}\label{eq:equaz}
    w_L(\pi)\leq\frac{(h-k)n}{h+1}+k+1.
\end{equation}
Since $n>h+1$, the right-hand side of \eqref{eq:equaz} attains its maximum for $k=0$; in this case, $w_L(\pi)\leq\frac{hn}{h+1}+1$, so that $w_L(\pi)<n$.
Thus, $\pi$ is not contained in any $h$-pseudoregulus for $L$, and the claim is proved.
\end{proof}

We are going to prove that the equivalence between two maximum $h$-scattered linear sets of $h$-pseudoregulus type only depends on the companion automorphisms associated with the $\fqn$-semilinear maps defining the line linear sets.
For any $\fq$-linear set $L_{\underline{f}}$ of $\Lambda$ as in \eqref{eq:linearpseudo}, we denote by $A(\underline{f})=(\mathrm{id},\sigma_2,\ldots,\sigma_{h+1})$ the $(h+1)$-ple of automorphisms associated with $\underline{f}$ and by $U_{A(\underline{f})}$ the $\fq$-vector space
\[ U_{A(\underline{f})}=\{(x,x^{\sigma_2},\ldots,x^{\sigma_{h+1}})\colon x\in \fqn\}. \]

\begin{theorem}\label{th:equivhscatt}
Two maximum $h$-scattered $\fq$-linear sets of $h$-pseudoregulus type $L_{\ff}$ and $L_{\underline{g}}$ in $\PG((h+1)t-1,q^n)$ are $\mathrm{P}\Gamma\mathrm{L}((h+1)t,q^n)$-equivalent if and only if $U_{A(\underline{f})}$ and $U_{A(\underline{g})}$ are $\Gamma\mathrm{L}(h+1,q^n)$-equivalent.
\end{theorem}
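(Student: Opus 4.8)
The plan is to show that each of the two equivalences in the statement is controlled by the same arithmetic condition on the exponent sets, and then to identify the two conditions. Write $I_f=\{0,i_2,\ldots,i_{h+1}\}$ for the set of exponents of the automorphisms in $A(\ff)$, where $\sigma_j\colon x\mapsto x^{q^{i_j}}$, and similarly $I_g$ for $\underline g$. By Theorem~\ref{th:mainsec4} both $I_f$ and $I_g$ are Moore exponent sets for $q$ and $n$, so by Theorem~\ref{th:MESescatt} and Theorem~\ref{cod} the spaces $U_{A(\ff)}$ and $U_{A(\underline g)}$ are $\Gamma\mathrm{L}(h+1,q^n)$-equivalent exactly when the monomial MRD-codes $\langle x^{q^i}\colon i\in I_f\rangle_{\fqn}$ and $\langle x^{q^i}\colon i\in I_g\rangle_{\fqn}$ are equivalent; by Theorem~\ref{th:equivmon} this holds if and only if $I_g=I_f+s\pmod n$ for some $s$. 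It therefore remains to prove that $L_\ff$ and $L_{\underline g}$ are $\mathrm{P}\Gamma\mathrm{L}((h+1)t,q^n)$-equivalent if and only if $I_g=I_f+s\pmod n$.

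For sufficiency, I would pass to the geometric description of Theorem~\ref{thm:checceserve}, writing $L_\ff=p_{\Gamma,\Lambda}(\Sigma)$ with $\Gamma=\langle\Theta^{\Psi^i}\colon i\notin I_f\rangle$ and $\Lambda=\langle\Theta^{\Psi^i}\colon i\in I_f\rangle$. Assuming $I_g=I_f+s\pmod n$, the collineation $\Psi^s\in\mathrm{P}\Gamma\mathrm{L}(\Sigma^*)$ fixes $\Sigma$ and sends each $\Theta^{\Psi^i}$ to $\Theta^{\Psi^{i+s}}$; hence it carries $\Gamma$ and $\Lambda$ to a center and an axis defining $L_{\underline g}$, and it commutes with the projection because $\Psi^s(\Sigma)=\Sigma$. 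Thus $\Psi^s(L_\ff)=L_{\underline g}$ up to the choice of axis, which is immaterial by Proposition~\ref{prop:axisimmaterial}, giving the desired equivalence.

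For necessity, suppose $L_\ff$ and $L_{\underline g}$ are $\mathrm{P}\Gamma\mathrm{L}$-equivalent. If $n=h+1$ both are subgeometries $\PG((h+1)t-1,q)$ and $I_f=I_g=\{0,1,\ldots,h\}$, so there is nothing to check. If $n>h+1$, then by Theorem~\ref{th:uniqPseudo} each linear set has a unique $h$-pseudoregulus, so any equivalence must map the pseudoregulus of $L_\ff$ onto that of $L_{\underline g}$ and hence send an element $\Pi$ of the first onto an element $\Pi'$ of the second. Restricting to $\Pi$ produces a $\mathrm{P}\Gamma\mathrm{L}(h+1,q^n)$-equivalence of the slices $L_\ff\cap\Pi$ and $L_{\underline g}\cap\Pi'$; by \eqref{eq:standard} these are projectively equivalent to $L_{U_{A(\ff)}}$ and $L_{U_{A(\underline g)}}$ in $\PG(h,q^n)$. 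Since these are $h$-scattered, Theorem~\ref{th:simple} (for $h\geq2$; the case $h=1$ is \cite{LMPT:14}) lifts the equivalence to a $\Gamma\mathrm{L}(h+1,q^n)$-equivalence of $U_{A(\ff)}$ and $U_{A(\underline g)}$, which by the first paragraph forces $I_g=I_f+s\pmod n$.

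I expect the necessity direction to be the main obstacle: the crux is to descend from a global equivalence of the two big linear sets to an equivalence of single $(h+1)$-dimensional slices, and this depends essentially on the uniqueness of the $h$-pseudoregulus provided by Theorem~\ref{th:uniqPseudo} for $n>h+1$, together with the fact recorded in \eqref{eq:standard} that every slice is a copy of the small linear set $L_{U_{A(\cdot)}}$. Once the slice equivalence is in hand, Theorem~\ref{th:simple} and the monomial classification of Theorems~\ref{cod} and \ref{th:equivmon} close the argument, whereas sufficiency is just the symmetry furnished by the power $\Psi^s$ of the defining collineation.
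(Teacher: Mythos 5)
Your opening reduction and your necessity argument are sound and, for $h\geq 2$, run along essentially the same lines as the paper: the paper likewise combines the uniqueness of the $h$-pseudoregulus (Theorem \ref{th:uniqPseudo}) with \eqref{eq:standard} and Theorem \ref{th:simple} to pass between the global equivalence and the equivalence of the slices $U_{A(\ff)}$ and $U_{A(\underline{g})}$; the only difference is order of operations (the paper first lifts the collineation to a map $F$ with $F(U_\ff)=U_{\underline{g}}$ and then restricts to one pseudoregulus element, while you restrict the collineation first and lift at slice level). Both versions need $h\geq2$ for Theorem \ref{th:simple} and, like the paper itself, must defer $h=1$ to \cite{LMPT:14}. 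Your translation of the $\Gamma\mathrm{L}(h+1,q^n)$-equivalence of $U_{A(\ff)},U_{A(\underline{g})}$ into the condition $I_g=I_f+s \pmod n$ via Theorems \ref{cod} and \ref{th:equivmon} is also correct; it is exactly how the paper deduces Corollary \ref{cor:equivissue} from this theorem.

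The genuine gap is in your sufficiency direction, at the step ``it carries $\Gamma$ and $\Lambda$ to a center and an axis defining $L_{\underline g}$\,\dots\,Thus $\Psi^s(L_\ff)=L_{\underline g}$ up to the choice of axis.'' What the collineation $\Psi^s$ actually yields (after invoking Proposition \ref{prop:axisimmaterial}, and after writing $L_\ff$ as a projection, which requires Theorems \ref{th:fromgeometrytoalgebra} and \ref{thm:maintheoremsec3} rather than Theorem \ref{thm:checceserve}) is that $L_\ff$ is equivalent to the projection $p_{\Gamma_g,\Lambda_g}(\Sigma)$ of the subgeometry $\Sigma$ attached to $L_\ff$, where $\Gamma_g=\langle\Theta^{\Psi^j}\colon j\notin I_g\rangle$. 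By Theorem \ref{thm:checceserve} this projection is equivalent to some $L_{\underline{g}'}$ whose companion automorphisms have exponent set $I_g$; but there is no reason why it should be the \emph{given} linear set $L_{\underline g}$, whose defining tuple $\underline{g}$ shares with $\underline{g}'$ only the companion automorphisms, not the strictly $\fqn$-semilinear maps themselves. The assertion that the equivalence class of a linear set of $h$-pseudoregulus type depends only on the automorphisms and not on the actual maps $g_j$ is precisely the nontrivial content of this implication, so assuming it at this point makes the argument circular. The gap is fillable: if $A(\underline{g}')=A(\underline{g})$, then each $g_j\circ (g_j')^{-1}$ is $\fqn$-linear (the automorphisms cancel), so the componentwise map $(y_1,\ldots,y_{h+1})\mapsto\bigl(y_1,g_2((g_2')^{-1}(y_2)),\ldots,g_{h+1}((g_{h+1}')^{-1}(y_{h+1}))\bigr)$ carries $U_{\underline{g}'}$ onto $U_{\underline{g}}$. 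Note that this patch is a degenerate case of what the paper does for the whole converse: it glues slice-level maps into $\psi=\psi_{x_1}\oplus\cdots\oplus\psi_{x_t}$, $\psi_{x_i}=H_{x_i}^{-1}\circ G\circ F_{x_i}$, across the direct-sum decomposition of $U_\ff$ over $t$ pseudoregulus elements; that construction handles the shift and the independence from the chosen maps in one stroke and makes the excursion through $\Psi^s$ unnecessary.
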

\begin{proof}
By Theorem \ref{th:simple}, $L_{\ff}$ and $L_{\underline{g}}$ are $\mathrm{P}\Gamma\mathrm{L}((h+1)t,q^n)$-equivalent if and only if $U_{\underline{f}}$ and $U_{\underline{g}}$ are $\Gamma\mathrm{L}((h+1)t,q^n)$-equivalent.
Also, by Theorem \ref{th:uniqPseudo}, the pseudoreguli $\mathcal{P}_\ff$ and $\mathcal{P}_{\underline{g}}$, associated respectively with $L_\ff$ and $L_{\underline{g}}$, are uniquely determined.

If the linear sets $L_{\ff}$ and $L_{\underline{g}}$ are ${\rm P\Gamma L}((h+1)t,q^n)$-equivalent, then by Theorem \ref{th:simple} there exists $F \in {\rm \Gamma L}((h+1)t,q^n)$ such that $F(U_\ff)=U_{\underline{g}}$.
Let $\Pi_{x_i}=\PG(W_{x_i},\fqn)$, with $i\in\{1,\ldots,t\}$, be $t$ elements of $\mathcal{P}_{\ff}$ such that
\[ U_\ff=(W_{x_1}\cap U_\ff)\oplus\ldots\oplus (W_{x_t}\cap U_\ff). \]
Since $F(U_\ff)=U_{\underline{g}}$ and $\dim_{\fq}(W_{x_i}\cap U_\ff)=n$, we have that $\varphi_F(\Pi_{x_i})=\PG(F(W_{x_i}),\fqn)\in \mathcal{P}_{\underline{g}}$ and hence
\[ U_{\underline{g}}=(F(W_{x_1})\cap U_{\underline{g}})\oplus\ldots\oplus (F(W_{x_t})\cap U_{\underline{g}}). \]
In particular, $F(W_{x_1}\cap U_\ff)=F(W_{x_1})\cap U_{\underline{g}}$.
By \eqref{eq:standard} we have that $W_{x_1}\cap U_\ff$ is ${\rm GL}(h+1,q^n)$-equivalent to $U_{A(\ff)}$ and $F(W_{x_1})\cap U_{\underline{g}}$ is $\Gamma\mathrm{L}(h+1,q^n)$-equivalent to $U_{A(\underline{g})}$. Then the claim follows.

Conversely, assume that there exists $G \in \Gamma\mathrm{L}(\fqnt^{h+1},\fqn)$ such that $G(U_{A(\underline{f})})=U_{A(\underline{g})}$.
As above, we can write 
\[ U_\ff=(W_{x_1}\cap U_\ff)\oplus\ldots\oplus (W_{x_t}\cap U_\ff), \]
and 
\[ U_{\underline{g}}=(W_{x_1}'\cap U_{\underline{g}})\oplus\ldots\oplus (W_{x_t}'\cap U_{\underline{g}}), \]
for some $\PG(W_{x_i},\fqn) \in \mathcal{P}_\ff$ and $\PG(W_{x_i}',\fqn) \in \mathcal{P}_{\underline{g}}$ with $x_1,\ldots,x_t \in \fqnt$ which are $\fqn$-linearly independent.
By \eqref{eq:standard}, for each $i \in\{1,\ldots,t\}$ there exist two invertible $\fqn$-linear maps $F_{x_i}\colon W_{x_i}\rightarrow W$ and $H_{x_i}\colon W_{x_i}'\rightarrow W$ such that
\[ F_{x_i}(W_{x_i}\cap U_\ff)=U_{A(\ff)}\,\,\text{and}\,\,H_{x_i}(W_{x_i}'\cap U_{\underline{g}})=U_{A(\underline{g})}, \]
where $W=\fqn^{h+1}$.
Define $\psi_{x_i}=H_{x_i}^{-1}\circ G \circ F_{x_i}\colon W_{x_i}\rightarrow W_{x_i}'$.
Since $V=W_{x_1}\oplus\ldots \oplus W_{x_t}=W_{x_1}'\oplus\ldots \oplus W_{x_t}'$, if $\psi_{x_i}=H_{x_i}^{-1}\circ G \circ F_{x_i}\colon W_{x_i}\rightarrow W_{x_i}'$, we may define the map
\[ \psi\colon V \rightarrow V,\quad \psi\left( \sum_{i=1}^t w_{i} \right)=\sum_{i=1}^t \psi_{x_i}(w_i),\qquad \textrm{with}\quad w_i\in W_{x_i}. \]
Clearly, $\psi\in{\rm \Gamma L}(V,\fqn)$ and
\[ \psi(U_\ff)=U_{\underline{g}}, \]
so that $L_\ff$ and $L_{\underline{g}}$ are ${\rm P\Gamma L}((h+1)t,q^n)$-equivalent.
\end{proof}

Let $\mathrm{id},\sigma_2,\ldots,\sigma_{h+1}$ be the companion automorphisms associated with $\ff$ and $I_{\ff}=\{i_1=0,i_2,\ldots,i_{h+1}\}$ be such that $x^{\sigma_{j}}=x^{q^{i_j}}$ for each $j \in \{2,\ldots,h+1\}$.

\begin{corollary}\label{cor:equivissue}
Two maximum $h$-scattered $\fq$-linear sets of $h$-pseudoregulus type $L_{\ff}$ and $L_{\underline{g}}$ in $\PG((h+1)t-1,q^n)$ are $\mathrm{P}\Gamma\mathrm{L}((h+1)t,q^n)$-equivalent if and only if
\[ I_{\ff}=I_{\underline{g}}+s=\{i+s \pmod{n} \colon i \in I_{\underline{g}}\}, \]
for some $s \in \{0,\ldots,n-1\}$.
\end{corollary}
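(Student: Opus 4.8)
The plan is to combine Theorem~\ref{th:equivhscatt} with Theorem~\ref{th:equivmon}. By Theorem~\ref{th:equivhscatt}, the two linear sets $L_{\ff}$ and $L_{\underline{g}}$ are $\mathrm{P}\Gamma\mathrm{L}((h+1)t,q^n)$-equivalent if and only if the $\fq$-subspaces $U_{A(\underline{f})}$ and $U_{A(\underline{g})}$ of $\fqn^{h+1}$ are $\Gamma\mathrm{L}(h+1,q^n)$-equivalent. So the entire problem reduces to translating this $\Gamma\mathrm{L}(h+1,q^n)$-equivalence of the ``diagonal'' subspaces into the arithmetic condition on the index sets $I_{\ff}$ and $I_{\underline{g}}$.

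The key observation is that the subspaces $U_{A(\underline{f})}=\{(x,x^{q^{i_2}},\ldots,x^{q^{i_{h+1}}})\colon x\in\fqn\}$ and $U_{A(\underline{g})}$ are precisely the subspaces $U_{\cC}$ (in the sense of Theorem~\ref{cod}) associated with the monomial codes $\cC_{\ff}=\langle x^{q^{i_1}},\ldots,x^{q^{i_{h+1}}}\rangle_{\fqn}$ and $\cC_{\underline{g}}=\langle x^{q^{j_1}},\ldots,x^{q^{j_{h+1}}}\rangle_{\fqn}$, where $I_{\ff}=\{i_1=0,i_2,\ldots,i_{h+1}\}$ and $I_{\underline{g}}=\{j_1=0,j_2,\ldots,j_{h+1}\}$. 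Since $L_{\ff}$ and $L_{\underline{g}}$ are assumed maximum $h$-scattered, Theorem~\ref{th:mainsec4} guarantees that $I_{\ff}$ and $I_{\underline{g}}$ are Moore exponent sets, and hence by Theorem~\ref{th:MESescatt} the codes $\cC_{\ff}$ and $\cC_{\underline{g}}$ are MRD-codes of the required type. By the equivalence part of Theorem~\ref{cod}, the two MRD-codes $\cC_{\ff}$ and $\cC_{\underline{g}}$ are equivalent as rank metric codes if and only if $U_{A(\underline{f})}=U_{\cC_{\ff}}$ and $U_{A(\underline{g})}=U_{\cC_{\underline{g}}}$ are $\Gamma\mathrm{L}(h+1,q^n)$-equivalent. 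Chaining these equivalences, the geometric $\mathrm{P}\Gamma\mathrm{L}$-equivalence of the linear sets is equivalent to the code equivalence of $\cC_{\ff}$ and $\cC_{\underline{g}}$.

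Finally, since both codes are generated by monomials, I would invoke Theorem~\ref{th:equivmon} directly: $\cC_{\ff}$ and $\cC_{\underline{g}}$ are equivalent if and only if $I_{\ff}=I_{\underline{g}}+s=\{i+s\pmod n\colon i\in I_{\underline{g}}\}$ for some $s\in\{0,\ldots,n-1\}$, which is exactly the claimed condition. The resulting chain of ``if and only if'' statements yields the corollary.

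The step requiring the most care is the explicit identification of $U_{A(\underline{f})}$ with the subspace $U_{\cC_{\ff}}$ attached to the monomial code, together with verifying that the notion of $\Gamma\mathrm{L}(h+1,q^n)$-equivalence of subspaces appearing in Theorem~\ref{th:equivhscatt} matches verbatim the one in the equivalence statement of Theorem~\ref{cod}; both are phrased in terms of $\Gamma\mathrm{L}(h+1,q^n)$ acting on $\fqn^{h+1}$, so this should be a direct match, but one must be sure the field automorphism $\rho$ permitted in code equivalence corresponds correctly to the semilinear part of the $\Gamma\mathrm{L}$-map. Once this dictionary is pinned down, the remaining deductions are immediate applications of the cited theorems, and no genuinely new computation is needed.
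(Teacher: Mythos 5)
Your proof is correct and follows essentially the same route as the paper's: Theorem~\ref{th:equivhscatt} reduces the problem to $\Gamma\mathrm{L}(h+1,q^n)$-equivalence of $U_{A(\underline{f})}$ and $U_{A(\underline{g})}$, this is translated into equivalence of the monomial MRD-codes, and Theorem~\ref{th:equivmon} finishes the argument. If anything, your version is slightly more careful than the paper's, which attributes the middle step (subspace equivalence $\Leftrightarrow$ code equivalence) to Theorem~\ref{th:MESescatt}, whereas --- as you correctly identify --- that step really rests on the ``Furthermore'' part of Theorem~\ref{cod}, with Theorem~\ref{th:mainsec4} and Theorem~\ref{th:MESescatt} needed only to certify that the monomial codes are MRD so that Theorem~\ref{cod} applies.
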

\begin{proof}
By Theorem \ref{th:equivhscatt}, $L_\ff$ and $L_{\underline{g}}$ are $\mathrm{P}\Gamma\mathrm{L}((h+1)t,q^n)$-equivalent if and only if $U_{A(\underline{f})}$ and $U_{A(\underline{g})}$ are $\Gamma\mathrm{L}(h+1,q^n)$-equivalent.
By Theorem \ref{th:MESescatt}, $U_{A(\underline{f})}$ and $U_{A(\underline{g})}$ are $\Gamma\mathrm{L}(h+1,q^n)$-equivalent if and only if
the MRD-codes $\cC=\langle x,x^{\sigma_2},\ldots,x^{\sigma_{h+1}} \rangle_{\fqn}$ and $\cC'=\langle x,x^{\tau_2},\ldots,x^{\tau_{h+1}} \rangle_{\fqn}$ are equivalent, where
the $\sigma_i$'s and the $\tau_j$'s are the companion automorphisms of $\ff$ and $\underline{g}$, respectively.
From Theorem \ref{th:equivmon} the claim follows.
\end{proof}

Finally, we point out that the asymptotic result of Theorem \ref{th:DanyU} about Moore exponent sets, together with Theorem \ref{th:mainsec4} and Corollary \ref{cor:equivissue}, leads to an asymptotic structural classification of maximum $h$-scattered linear sets of pseudoregulus type.

\begin{theorem}\label{th:asymp}
Let $L$ be a maximum $h$-scattered $\fq$-linear of rank $nt$ in $\Lambda=\PG((h+1)t-1,q^n)$ of $h$-pseudoregulus type, with $L$ equivalent to some $L_\ff$ as in \eqref{eq:linearpseudo}.
Let $I\subseteq \{0,\ldots,n-1\}$ be the Moore exponent set associated with $L_\ff$, $j$ be the largest element of $I$, and
\[N=\begin{cases} 1 & \textrm{if}\quad h=1, \\ 4j+2 & \textrm{if}\quad h=2, \\ \frac{13}{3}j+2 & \textrm{if}\quad h>2.  \end{cases}\]
If $q>5$ and $n>N$, then there exists $s\in\{0,\ldots,n-1\}$ such that $\gcd(s,n)=1$ and $L$ is equivalent to
\[\{\langle(x,x^{q^s},x^{q^{2s}}\ldots,x^{q^{hs}})\rangle_{\fqn}\colon x\in\fqnt^*\}.\]
\end{theorem}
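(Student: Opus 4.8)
The plan is to reduce the geometric statement entirely to the asymptotic classification of Moore exponent sets from Theorem \ref{th:DanyU}, using the algebraic machinery already established in this section. First I would invoke Theorem \ref{th:mainsec4}: since $L$ is a maximum $h$-scattered linear set of $h$-pseudoregulus type, it is equivalent to some $L_\ff$ as in \eqref{eq:linearpseudo}, and the associated set $I=\{0,i_2,\ldots,i_{h+1}\}\subseteq\{0,\ldots,n-1\}$ is a Moore exponent set for $q$ and $n$. This is precisely the hypothesis on $I$ in the statement, so the problem becomes one about the shape of Moore exponent sets.

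Next I would split according to the value of $h$. When $h=1$, the set $I=\{0,i_2\}$ has size $2$; by Proposition \ref{per2-scattered} applied to the two-element case (or directly, since $\{0,i_2\}$ Moore forces $\gcd(i_2,n)=1$), we immediately have an $s=i_2$ with $\gcd(s,n)=1$, and $L_\ff$ is equivalent to the claimed monomial form with $N=1$ and no further restriction. When $h\geq2$ we have $|I|=h+1>2$, so Theorem \ref{th:DanyU} applies. Setting $j$ to be the largest element of $I$ and $N$ equal to $4j+2$ if $h=2$ (so $|I|=3$) or $\tfrac{13}{3}j+2$ if $h>2$ (so $|I|>3$), the hypotheses $q>5$ and $n>N$ are exactly those of Theorem \ref{th:DanyU}. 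That theorem then guarantees that $I$ consists of the first $h+1$ elements of an arithmetic progression, i.e. after translating the smallest element to $0$ we have $I=\{0,d,2d,\ldots,hd\}$ for some common difference $d$ with $\gcd(d,n)=1$.

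Finally I would translate this back to the geometric equivalence. Writing $s=d$, the equality $I=\{0,s,2s,\ldots,hs\}$ (mod $n$), together with $\gcd(s,n)=1$, identifies the companion automorphisms of $L_\ff$ as $x\mapsto x^{q^{ks}}$ for $k=0,\ldots,h$. By Corollary \ref{cor:equivissue}, the projective equivalence class of a maximum $h$-scattered linear set of $h$-pseudoregulus type depends only on the exponent set $I$ up to translation modulo $n$; hence $L$ is $\mathrm{P}\Gamma\mathrm{L}((h+1)t,q^n)$-equivalent to $L_{\underline{g}}$ where the companion automorphisms are exactly $x\mapsto x^{q^{ks}}$. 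This is the linear set
\[\{\langle(x,x^{q^s},x^{q^{2s}},\ldots,x^{q^{hs}})\rangle_{\fqn}\colon x\in\fqnt^*\},\]
as claimed.

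The step requiring the most care is the bookkeeping around Theorem \ref{th:DanyU}, since the value of $N$ in the statement is piecewise and must be matched correctly to the two cases $|I|=3$ and $|I|>3$; in particular one must verify that the translate normalizing the smallest element of $I$ to $0$ does not alter the largest element $j$ in a way that breaks the threshold $n>N$. Since any translation $I\mapsto I+s$ preserves the Moore property and, by Corollary \ref{cor:equivissue}, the equivalence class of $L_\ff$, one may assume from the outset that $0\in I$ and that $j$ is computed for this normalized set, so the bound is applied consistently. The remaining arguments are direct citations of results already proved in the excerpt, so no genuinely new obstacle arises beyond this case analysis.
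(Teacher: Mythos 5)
Your proposal is correct and takes essentially the same route as the paper: the paper obtains Theorem \ref{th:asymp} exactly by combining Theorem \ref{th:mainsec4} (to pass from $L$ to a Moore exponent set $I$), Theorem \ref{th:DanyU} (to force the arithmetic-progression shape when $|I|=h+1>2$, with the thresholds $4j+2$ and $\frac{13}{3}j+2$ matched to $|I|=3$ and $|I|>3$), and Corollary \ref{cor:equivissue} (to conclude that $L$ is equivalent to the monomial model with exponents $\{0,s,\ldots,hs\}$). Your extra bookkeeping --- the $h=1$ case via $\gcd(i_2,n)=1$, the coprimality of the common difference (which follows from the Moore property, e.g.\ via Proposition \ref{lemma:fieldintersection}), and the harmless normalization $0\in I$ --- is precisely the routine verification the paper leaves implicit.
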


\begin{remark}
For any $s,d\in\{1,\ldots,n-1\}$ coprime with $n$, recall that two generalized Gabidulin codes $\mathcal{G}_{k,s}$ and $\mathcal{G}_{k,d}$ are equivalent if and only if $s\equiv\pm d\pmod n$; see \cite{LN2016,LTZ}.
From Corollary \ref{cor:equivissue} and Theorem \ref{th:asymp} follows that, if $n>N$ and $q>5$, then there are exactly $\varphi(n)/2$ orbits of $h$-scattered $\fq$-linear sets of $h$-pseudoregulus type in $\Lambda$ under ${\rm P\Gamma L}(\Lambda)$, where $\varphi$ is the Euler's totient function.
\end{remark}

\bigskip
\par\noindent Vito Napolitano, Olga Polverino, Giovanni Zini and Ferdinando Zullo\\
Dipartimento di Matematica e Fisica,\\
Universit\`a degli Studi della Campania ``Luigi Vanvitelli'',\\
Viale Lincoln 5,\\
I--\,81100 Caserta, Italy\\
{{\em \{vito.napolitano,olga.polverino,giovanni.zini,ferdinando.zullo\}@unicampania.it}}

\end{document}